\tikzstyle{new edge style 0}=[line width=1.5pt]
\def\namedlabel#1#2{\begingroup
	#2%
	\def\@currentlabel{#2}%
	\phantomsection\label{#1}\endgroup
}
\newcommand{\TYPE}{{\mathbf{t}}}
\def\titlerunning#1{\gdef\titrun{#1}}
\def\author#1{\gdef\autrun{\def\and{\unskip, }#1}\gdef\@author{#1}}
\def\subjclass#1{{\renewcommand{\thefootnote}{}%
		\footnote{\emph{Mathematics Subject Classification (2010):} #1}}}
\def\keywords#1{\par\medskip
	\noindent\textbf{Keywords.} #1}
\newtheorem{theorem}{Theorem}[section]
\newtheorem{corollary}[theorem]{Corollary}
\newtheorem{lemma}[theorem]{Lemma}
\newtheorem{proposition}[theorem]{Proposition}
\theoremstyle{definition}
\newtheorem{definition}[theorem]{Definition}
\newtheorem{exploration}[theorem]{Exploration}
\newtheorem{remark}[theorem]{Remark}
\DeclareMathAlphabet{\mathscrbf}{OMS}{mdugm}{b}{n}
\numberwithin{equation}{section}
\newcommand{\bx}{{\bf x}}
\newcommand{\bc}{{\bf c}}
\newcommand{\bw}{{\bf w}}
\newcommand{\bW}{{\bf W}}
\newcommand{\cvd}{\ell^2_{\downarrow}}
\newcommand{\eps}{\varepsilon}
\newcommand{\R}{\mathbb{R}}
\newcommand{\N}{\mathbb{N}}
\newcommand{\p}{\mathbb{P}}
\newcommand{\Df}{{D}}
\newcommand{\Dfu}{{D}_0^{\uparrow\uparrow}(\R_+)}
\newcommand{\Dfuw}{{D}_0^{\uparrow}(\R_+)}
\newcommand{\DJfu}{{D}_{0,J}^{\uparrow}(\R_+)}
\newcommand{\Dfmr}{{D}_{\vec \rho}(\R^m_+)}
\newcommand{\Dfp}{{D}_0^+(\R_+)}
\newcommand{\G}{{\mathcal{G}}}
\newcommand{\sS}{{\mathscr{S}}}
\newcommand{\cdl}{c\`{a}dl\`{a}g}
\newcommand{\Jcal}{\mathcal{J}}
\newcommand{\cE}{\mathcal{E}}
\newcommand{\cC}{\mathcal{C}}
\newcommand{\bT}{{\bf T}}
\newcommand{\sM}{{\mathscr{M}}}
\newcommand{\bbX}{{\mathbb{X}}}
\newcommand{\bbT}{{\mathbb{T}}}
\newcommand{\bbx}{{\mathbbm{x}}}
\newcommand{\cL}{{\mathcal{L}}}
\newcommand{\tcirc}{{\,\tilde\circ\,}}
\newcommand{\len}{\operatorname{len}}
\newcommand{\id}{\operatorname{id}}
\title{Degree corrected stochastic block model: \\ excursion representation}
\titlerunning{Excursion representation of SBM}
\author{David Clancy, Jr.\footnote{Department of Mathematics, University of Wisconsin -- Madison, 480 Lincoln Dr, Madison, WI, USA 53706},\ \ Vitalii Konarovskyi\footnote{Faculty of mathematics, informatics and natural sciences, University of Hamburg, Bundesstraße 55, 20146 Hamburg, Germany; Institute of Mathematics of NAS of Ukraine, Kyiv, Tereschenkivska st. 3, 01024 Kyiv, Ukraine},\ \ Vlada Limic\footnote{Centre National de la Recherche Scientifique; IRMA, UMR 7501, Université de Strasbourg,
7 rue René-Descartes, 67084 Strasbourg Cedex, France}} 
\date{\today}
\def\subjclass#1{{\renewcommand{\thefootnote}{}%
\footnote{\emph{Mathematics Subject Classification (2020):} #1}}}
\def\emails#1{{\renewcommand{\thefootnote}{}%
\footnote{\emph{E-mails:} #1}}}
\begin{document}

\maketitle

\emails{\href{mailto:dclancy@math.wisc.edu}{dclancy@math.wisc.edu},\href{mailto:vitalii.konarovskyi@math.uni-bielefeld.de}{vitalii.konarovskyi@uni-hamburg.de},\href{mailto:vlada@math.unistra.fr}{vlada@math.unistra.fr}}

\begin{abstract} This is the first of two complementary works in which we analyze the connected components of the degree-corrected stochastic block model (DCSBM). Our model is a random graph with an underlying community structure and degree in-homogeneity. 
It belongs to a class of non-rank one models. The scaling limit of connected component sizes in the near-critical regime, obtained by Konarovskyi and Limic (2021) for a subfamily of DCSBM, is 
non-trivially different (although related to) the standard eternal multiplicative coalescent of Aldous (1997). 
 
The Aldous (1997) excursion representation combined with weak convergence approach to the scaling limits of connected components of random graphs
proved to be much more difficult (and therefore rare) for non rank-one models. In this work we show how to build a random field encoding for the connected component structure of DCSBM, in part relying on the theory of Chaumont and Marolleau (2020). We then show how one can, under additional assumptions, reformulate the minimization problem stated in terms of multidimensional first hitting times 
into an equivalent minimization problem stated for a single real-valued stochastic process.
This reformulation relies on a novel composition-like operator on pairs of compatible non-decreasing rcll functions, which might be of independent interest.

  \keywords{degree-corrected stochastic block model, breadth-first walk, multidimensional stopping time, excursion representation, composition-like operator}
\end{abstract}
\subjclass{Primary 
  60J90, 
  05C80; 
Secondary  
  60J27, 
  60G60 
}

\newpage
\tableofcontents

\section{Introduction}

Over the past several decades, random graphs have become an indispensable tool for studying real-world networks \cite[Chapter 1]{vanderHofstad.17}. Real-world networks are frequently both large and complicated so that a precise description is near impossible in practice. In an attempt to understand finer properties of these large networks, one typically constructs a family of finite random graph models, and studies various structural properties of these  random graphs as $n$ gets large.

A fundamental question in this area is to understand the conditions under which the above large random graphs contain a connected component of size comparable to the size of the entire network. Ever since the fundamental work of Erd\H{o}s and R\'{e}nyi, one approaches this problem as follows. 
Let us denote by $G_n$ the $n$th element of our sequence of random graphs.
The edge density of $G_n$ is parametrized by $\theta\ge 0$ (for example, as in bond percolation), so that our growing family of random graph families is $((G_n^{\theta},\theta \in [0,1]); n\geq 1)$. The aim is to identify the ``giant component'' phase transition, or more precisely, to find $\theta_c$ such that, if $\theta> \theta_c$ then the largest connected component of $G_n^{\theta}$ is of size $\Theta(n)$ with overwhelming probability as $n\to \infty$, and otherwise if $\theta< \theta_c$ then largest connected component of $G_n^{\theta}$ is of size $o(n)$ with overwhelming probability as $n\to \infty$. 
Understanding the structure of large networks in the near-critical regime, is then naturally related to the problem of understanding the connected components in the parameter window $\theta = \theta_c \pm \eps_n$, for some vanishing model-dependent sequence $(\eps_n)_n$. 
Bollab\'as, Janson and Riordan in \cite{BJR.07} identify the critical threshold $\theta_c$
under rather general hypotheses on the random graph model, however they do not provide any insight into the connected component structure within the (near)-critical window.

For the near-critical Erd\H{o}s-R\'{e}nyi random graph, the latter analysis was carried out already by Aldous in \cite{Aldous.97}.
Aldous' approach was based on the breadth-first walk encoding of the connected components sizes, which can be summarized as follows:
1) the excursions of the breadth-first walk above the past infimum encode useful information about the connected components of the random graph, and 2) it is meaningful to take the limit as $n\to \infty$ in this coupling, which yields the scaling limit of the connected component sizes. This approach, together with the analysis of Aldous and Limic \cite{AL.98}, has proven valuable for understanding the critical window connected component structure for a number of related random graph models. A far from complete list of papers using these ideas is \cite{ Limic.19,Martin:2017,BDW.21,DvdHvLS.17,DvdHvLS.20, ABBG.12, CKG.20, BSW.17, Joseph.14,NP.07}. 
Until now, this ``encoding via a stochastic process'' approach has proved successful for \textit{rank-one} models of random graphs. For this class of models the expected adjacency matrix is approximately a rank-one matrix, or equivalently,  $\p(i\sim j) \approx \psi(i)\psi(j)$ for any pair of vertices $(i,j)$, where  $\psi$ is some model-dependent positive function. 

It is not surprising that many complex (real-world) networks are believed not to be of rank-one. Indeed, a fundamental problem in statistics and computer science is to meaningfully separate data into clusters which share certain important characteristics \cite{Hennig:2016}. When the data is a graph, this clustering involves partitioning of the vertex set $V$ into blocks $C_1,C_2,\dotsm, C_m$ for some $m\ge2$, in such a way that the edge density within blocks is high, and the edge density of links transcending blocks is low (or alternatively, the edge density within blocks is low, while the density of edges transcending blocks is high). 
A well-known random graph model which  exhibits non-trivial community structure (or equivalently, the expected adjacency matrix of higher-rank) is the stochastic block model (SBM). 
The SBM with $m$ blocks is a graph on $m n$ vertices where for each $i\in[m]:=\{1,2,\dotsm,m\}$ there are $n$ vertices of type $i$, and where an edge connects vertex $v$ of type $i$ and vertex $u$ of type $j$ with probability $p_{i,j} = p_{j,i}$, independently over different pairs of vertices. This graph has become an important model for rigorous analysis of network clustering algorithms. We refer an interested reader to the survey of Abbe \cite{Abbe:2017} for more information and precise statements on these theoretical results.

The second and the third author recently identified in \cite{KL.21} a new critical window for the stochastic block model, and carried out the scaling limit analysis akin to that of \cite{AL.98}.
The scaling limit of \cite{KL.21} is the so called {\em interacting multiplicative coalescent}.
The techniques used therein do not include an explicit encoding of SBM via a random walk.

 The main goal of this paper is to provide an excursion representation for SBM. Our encoding is quite general as it extends, under certain additional assumptions, to the so-called \textit{degree-corrected stochastic block model} \cite{Karrer:2011}, which incorporates degree inhomogeneity among the vertices of the same block. 
Our study relies on a  novel (composition alike) operator on (pairs of) real-valued functions on $[0, \infty)$. This construction is natural but somewhat technical, and it is crucial for the scaling limit analysis. The scaling limit for the sizes of connected components of the degree-corrected stochastic block models will be exhibited in a forthcoming work \cite{CKL.24+}.

\section{Model and Results}
\subsection{Graphical Models}
 Denote by
\begin{equation*}
    \cvd = \left\{\bx = (x_1,x_2,\dotsm): x_1\ge x_2\ge\dotsm\ge 0,\quad\sum_{j=1}^\infty x_j^2<\infty\right\}.
\end{equation*}
If $\bw = (w_1,w_2,\dotsm, w_{N},0,0,\dotsm)$ we say that $\bw$ is has finite length, and also that 
$N$ is the length of $\bw$, which we write as $\len(\bw) = N$.
Consider some weight vector ${\bw} = (w_1,w_2,\dotsm)\in \cvd$ of finite length. Given any square summable vector $\bx$ with non-negative entries, we write ${\rm ord}(\bx)$ for the decreasing re-ordering of the entries of $\bx$.

We recall the inhomogeneous multiplicative random graph of \cite{Aldous.97, AL.98}. The graph $\mathcal{G}({\bw},q)$ is a graph on $\len({\bw})$ vertices labeled by $l\in[\len({\bw})]$ where
\begin{equation*}
\p(l\sim r\text{ in }\mathcal{G}({\bw},q)) = 1-\exp(-q w_l w_r). 
\end{equation*}
We interpret the value $w_l$ as the {\em propensity} of the vertex $l$ to form edges. 
It is often called {\em the weight} (or \textit{mass}) of vertex $l$.
A natural coupling of $(\mathcal{G}({\bw},q))_{q\geq 0}$ can be realized in a usual  way (typical for all percolation processes): let the edge between 
$l$ and $r$ appear according to a Poisson process with parameter/rate $w_l w_r$, independently over all $l\neq r$. 
Note that in order to keep track of the connected component structure only at a fixed time $q$, one can equivalently construct the graph $\mathcal{G}({\bw}, q)$ by attaching a Poisson (with mean $qw_lw_r$) number of edges between vertex $l, r$. The original (continuous-time) graph is then obtained from this multi-graph by removing any duplicate edges. 
The Erd\H{o}s-R\'{e}nyi (binomial) random graph $G(n,p)$ is the special case, where ${\bw} = (1,1,\dotsm, 1,0,\dotsm)$ with $\len({\bw}) = n$ and $q = -\log(1-p)$.

The degree-corrected stochastic block model (DCSBM) can be constructed in a similar fashion, see \cite{Karrer:2011}. 
Here we fix $m$ finite length vectors ${\bw}^1,\dotsm, {\bw}^m\in \cvd$, and a symmetric $m\times m$ matrix $Q$ with non-negative real entries. 
Each vertex $v$ is of the form $v = (l,i)$, where $i\in[m]$ is its {\em type} (this means that $v$ is an element of the $i$th block) and $w_l^i$ is its assigned weight corresponding to the {\em propensity} of $v$ to form edges.  Let us denote by ${\bW}$ the vector $({\bw}^1,\dotsm, {\bw}^m)\in (\cvd)^m=\cvd \times \cvd \times \cdots \times \cvd$ listing all the propensities of all the vertices in a type-wise increasing (and propensity-wise non-increasing) ordering.
The random (multi-)graph $\mathcal{G}({\bW}, Q)$ is obtained after attaching $\operatorname{Poi}(Q_{i,j} w_l^i w_r^j)$ many edges between each pair of vertices $(l, i)$ and $(r,j)$, independently over different pairs.
Since we are concerned here with the sizes of connected components, all duplicate edges and all self-loops will be ignored.

The DCSBM has two kinds of parameters. Parameters of the first kind are the weight vectors $\bw^i$, which give rise to the degree inhomogeneity in the graph. The larger the value of $w^i_l$, the more neighbors will the corresponding vertex $(l,i)$ have on the average. This is analogous to the setting of the rank-one graph $\G(\bw,q)$.
Parameters of the second kind are the entries of $Q$, and they determine the block structure of the graph.
 The larger the value of $Q_{i,j}$, the more likely will an edge appear between a vertex of type $i$ and a vertex of type $j$.
 In fact, the matrix $Q$ is a multi-dimensional analogue of time $q$ in the rank-one model.

\begin{remark}
\label{rem:mass_time_scaling} 
One could incorporate the information on the diagonal of $Q$ within the weight data.
More precisely, define $\bW^Q = (\sqrt{Q_{1,1}}\bw^1,\dotsm, \sqrt{Q_{m,m}}\bw^m)$ and
$Q'_{i,j}:=Q_{i,j}/\sqrt{Q_{i,i}Q_{j,j}}$.
In this way $Q'$ has $1$ on the diagonal, and moreover it is easy to see that the open edges (i.e.~connections) in $\G(\bW,Q)$ and in $\G(\bW^Q,Q')$ have the same law. Indeed, in the latter model, the scaling of the block weights cancels out the scaling of the $m$-dimensional time.    
Note, however, that in this coupling (visually) the same connected components of 
$\G(\bW,Q)$ and $\G(\bW^Q,Q')$ have
completely different weights.
\end{remark}

\subsection{First hitting times of fields}

An encoding of the connected components weights of the naturally coupled family of random graphs $(\mathcal{G}({\bw},q); q > 0)$ is due to Limic \cite{Limic.19}. 
This construction relies on $\len(\bw)$ independent exponential random variables $(\xi_l)_{l\in [\len(\bw)]}$, where $\xi_l \sim {\operatorname{Exp}}(w_l)$.
Here and below $\operatorname{Exp}(c)$ denotes an exponential random variable with rate $c$ (mean $\frac{1}{c})$.
If $A$ is a collection of vertices in $\mathcal{G}({\bw},q)$, define the {\em weight of $A$} to be $\sum_{l\in A} w_l$. 
For each $q$, denote by $\sM_q(1)\ge \sM_q(2)\ge \dotsm $ the weights of the connected components of $\mathcal{G}({\bw},q)$, listed in non-increasing order. 

For $q>0$, let $X^q \equiv X^{q,\bw} = (X^q(t);t\ge 0)$ denote the random walk-like process
\begin{equation}\label{eqn:bfw1}
    X^q(t) = -t + \sum_{l=1}^{\len({\bw})} w_l 1_{[\frac{1}{q} \xi_l \le t]}.
\end{equation}
For $y\geq 0$,  let $T^q(y) = \inf\{t: X^q(t-) = -y\} \equiv \inf\{t: X^q(t-) \leq -y\}  $ and denote by $Y_1^q<Y_2^q<\dotsm$ the successive (finitely many) jump times of $T^q$ viewed as a process in $y$.
 A key result of  \cite{Limic.19} is its Proposition 5, which states (in a slightly different language) that
the processes 
 $ ({\rm ord}(T^q(Y_l+)-T^q(Y_l);l\ge 1))_{q>0}$ 
and 
$(\sM_q(1),\sM_q(2),\cdots)_{q>0}$
are identical in law.
The main advantage of this encoding over similarly looking ones in \cite{Aldous.97, AL.98}, and various analogues constructed in the meantime, is that it works on the level of processes.
A different full encoding for connected component sizes of random graphs with (or without) deletion was invented by Martin and R\'ath in \cite{Martin:2017}.

It was observed already in \cite{Aldous.97, AL.98} 
that, for each fixed $q$, the walk-based ordered encoding $(T^q(Y_l+)-T^q(Y_l);l\ge 1)$ is distributed as a size-biased copy of ${\rm ord}(T^q(Y_l+)-T^q(Y_l);l\ge 1)$. Let $(\sM^*(l);l\ge 1)$ be a size-biased reordering of $(\sM(l);l\ge 1)$, where the size of $\sM(l)$ is equal to its weight. \begin{corollary}[see also {\cite[Proposition 1]{Limic.19}}] \label{cor:coro_limic}  
For each $q>0$
\begin{equation*}
    \left(T^q(Y_l+)-T^q(Y_l);l\ge 1 \right) \overset{d}{=} \left(\sM^*(l); l\ge 1 \right).
\end{equation*}
\end{corollary}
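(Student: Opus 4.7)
The plan is to derive the corollary by combining two ingredients: (i) the excursion--component correspondence that is the content of Proposition 5 of \cite{Limic.19}, recalled just before the statement, which says that the ordered excursion lengths and ordered component weights agree in law (and in fact coincide under the natural coupling); and (ii) the observation that the breadth-first exploration visits the connected components of $\G(\bw,q)$ in a size-biased random order, with the size of each component equal to its weight.

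The substantive step is (ii). I would argue as follows. Each vertex $l$ carries an independent clock $\xi_l/q$ with $\operatorname{Exp}(qw_l)$ distribution. The first excursion of $X^q$ is initiated by the vertex $l^{(1)} = \operatorname{argmin}_l (\xi_l/q)$, and by the classical minimum-of-exponentials computation $\p(l^{(1)} = l) = w_l/\sum_k w_k$. Consequently, for any connected component $C$ of $\G(\bw,q)$,
\begin{equation*}
    \p(\text{first component explored is } C \mid \G(\bw,q)) \; = \; \frac{\sum_{l\in C} w_l}{\sum_k w_k},
\end{equation*}
which is precisely the size-biased probability, where the size of $C$ is its weight $\sum_{l \in C} w_l$. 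Memorylessness of the exponentials then implies that, conditionally on the first explored component, the clocks of the remaining vertices are independent exponentials with their original rates, and the argument iterates. Hence the unordered sequence $(T^q(Y_l+) - T^q(Y_l); l \geq 1)$ is a size-biased permutation of its own decreasing rearrangement, with the size of each term equal to the value itself.

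Combining (i) and (ii) yields the corollary, since size-biased reordering by value is a measurable deterministic operation on finite nonnegative sequences and therefore commutes with the distributional identity supplied by Proposition 5 of \cite{Limic.19}. The only mildly delicate point---and the step that forces the argument in (ii) to be carried out in a pathwise coupled manner, rather than merely in distribution---is to ensure that the size carried by each excursion is truly the weight of the connected component that it encodes. This is guaranteed by the explicit clock-based construction \eqref{eqn:bfw1} of $X^q$ which already underpins Proposition 5 itself, so no additional machinery is required.
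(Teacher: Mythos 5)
Your proposal is correct and takes essentially the same route as the paper: the paper's one-line proof combines Proposition 5 of \cite{Limic.19} with the observation (simply cited from \cite{Aldous.97, AL.98}) that the chronological excursion list is a size-biased copy of its ordered version, whereas you re-derive that observation via the minimum-of-exponentials and memorylessness argument, which is precisely the argument the paper itself spells out later in the general setting of Section \ref{sec:BFW_discrete_proofs}. One minor wording caveat: size-biased reordering is a randomized operation (a probability kernel), not a deterministic map, but since the same kernel is applied to two sequences that are equal in law, your conclusion is unaffected.
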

\begin{proof}
Since  $ {\rm ord}(T^q(Y_l+)-T^q(Y_l);l\ge 1)$ 
and 
$(\sM_q(1),\sM_q(2),\cdots)$ are equally distributed, the same is true for their respective  size (weight)-biased lists. 
\end{proof} 

We now present a generalization of this representation, based on several ideas in the random tree and branching process literature \cite{Chaumont:2016, Hernandez:2020, Chaumont:2020}. 
The set-up is as follows: recall ${\bW}, Q$ fixed above, and provided that $w_l^i>0$, we let $\xi_l^i$ have $\operatorname{Exp}(w_l^i)$ distribution, where all the variables in the family  $(\xi_l^i)_{i\in [m], l\ge 1, w_l^i>0}$ are independent. When we refer to a vertex $v = (l,i)$ we will often simply write $\xi_v$ in place of $\xi_l^i$.
For all $i, j\in [m]$, let us define 
\begin{equation}
\label{D:matrix_R}
R_{i,j} := Q_{i,j}/Q_{i,i},
\end{equation} and
so that in particular $R_{i,i} \equiv 1$. 
In addition,
 for each $i,j\in [m]$ and all $t\geq 0$ we define
\begin{equation}\label{eqn:XfieldDef1}
    X_{i,j}(t) = \left\{ \begin{array}{ll}
         \displaystyle -t + \sum_{l=1}^{\text{len}({\bw}^j)} w_l^j 1_{[\frac{1}{Q_{j,j}}\xi_l^j \le  t]},&\mbox{if } i= j,  \\
         \displaystyle R_{i,j} \sum_{l=1}^{\text{len}({\bw}^j)} w_l^j 1_{[\frac{1}{Q_{j,j}}\xi_l^j \le  t]},&\mbox{if } i\neq j.
    \end{array}\right.
\end{equation}
The processes $X_{j} = (X_{1,j},\dotsm, X_{m,j})$, $j\in [m]$, clearly depend on both $Q$ and ${\bW}$; however, in the sequel this fact will be mostly suppressed from the notation. 
Observe that the vector-valued processes $X_j$ are independent over $j$. Also observe that for each fixed $j$, the off-diagonal processes depend deterministically on the diagonal $(X_{j,j}(t); \,t\geq 0)$. In particular, for any given $j\in [m]$, all the processes $(X_{i,j};\, i\in[m])$ have simultaneous jumps.

In order to state and prove an analogue of Corollary \ref{cor:coro_limic}, 
we need to define an analogue of the first hitting times process $(T^q(y);\, y\geq 0)$. Such processes were studied recently by Chaumont and Marolleau in 
\cite{Chaumont:2020, Chaumont:2021} in the context of random fields. 
We now recall the setting of \cite{Chaumont:2020, Chaumont:2021}, as well as some of their results which are fundamental for the present study.
Given (deterministic) c\`adl\`ag functions $x_{i,j}$ for $i,j\in[m]$ such that $x_{i,j}(0) = 0$ for all $i, j$, and such that $x_{i,j}$ is non-decreasing when $i\neq j$ and $x_{i,i}(t)-x_{i,i}(t-)\ge 0$ for all $t$ and $i\in [m]$, let us consider the following field
\begin{equation}\label{eqn:discField1}
    {\bbx}(\vec{t}) = {\bbx}(t_1,\dotsm, t_m) = \left( \sum_{j=1}^m x_{1,j}(t_j),\dotsm, \sum_{j=1}^m x_{m,j}(t_j)\right).
\end{equation} 

 It is proved in \cite{Chaumont:2020} that for each $\vec{y}\in \R^m_+$ there exists a unique solution to
\begin{numcases}{}
 x_i(\vec{t}-) =\sum_{j=1}^m x_{i,j}(t_j-) = -y_i,  & \text{$\forall i$  such that $t_i<\infty$}, \label{equ_equation_for_ti}\\
\vec{t} \to {\min}. & \nonumber 
\end{numcases}

Let us denote by 
\begin{equation}
\label{D:Tdeterministic}
\bbT(\bbx;\vec{y})= (T_1(\bbx;\vec{y}),T_2(\bbx;\vec{y}),\dotsm, T_m(\bbx;\vec{y}))\in[0,\infty]^m    
\end{equation}
this unique minimal solution. 
The condition $\vec{t} \to {\rm min}$ means that any other solution $\vec{t'}$ to~\eqref{equ_equation_for_ti} 
must be component-wise greater or equal to 
 $\bbT(\bbx;\vec{y})$, or equivalently that
 $T_j(\bbx;\vec{y})\le t_j'$ for all $j\in [m]$ (which we also write as $\bbT(\bbx;\vec{y})\le \vec{t'}$).

\begin{remark}
 Observe that $\vec{t} = (\infty,\infty,\dotsm,\infty)$ is always a solution to the equation in \eqref{equ_equation_for_ti}. As we will soon see, most of the random fields relevant for our present study will be such that $\bbT(\bbx;\vec{y})$ takes finite values in $\R_+^m$, for all $\vec{y}\in \R_+^m$ almost surely.
\end{remark}

In analogy to the deterministic setting, we now consider the $\R^m$-valued and $\R^m_+$-indexed field $\bbX = \bbX^{{\bW}, Q} = (\bbX(\vec{t}); \vec{t}\in \R^m_+)$, defined by
\begin{equation}\label{eqn:XfieldDef2}
    \bbX(\vec{t}) = \left(\sum_{j=1}^m X_{1,j}(t_j),\dotsm, \sum_{j=1}^m X_{m,j}(t_j) \right).
\end{equation}
By abuse of language we will henceforth refer to vectors $\vec{t}\in \R_+^m$ as ``time'', or less-frequently as ``time-lines''.

\subsection{Random field encoding of DCSBM}

Recall that ${\bW}$ and $Q$ are fixed as above.
Our next goal is to encode the weights of the connected components of the graph $\mathcal{G} = \mathcal{G}({\bW}, Q)$ in terms of its corresponding $\bbX$. 
Let us list the connected components $\cC(1),\cC(2),\dotsm$ 
 of $\mathcal{G}$ in some arbitrary (measurable) fixed way.

Recall that each vertex $v$ in $\mathcal{G}$ is identified with $(l,i)$ for some $l\ge 1$ and $i\in[m]$, where $i$ is the type of $v$, and $l$ is the ranking of $v$'s propensity or weight (specified as $w_l^i$) among all the type $i$ vertices in $\mathcal{G}$.
In forthcoming calculations it will often be convenient to write $\TYPE(v)$ to mean $i$, the type of $v$.
We can therefore define the total weight of type $j$ vertices in the $r$th connected component $\cC(r)$ of $\mathcal{G}$ by
\begin{equation*}
    \sM_j(r) := \sum_{l: (l,j)\in \cC(r)} w_l^j.
\end{equation*}
To keep track of this information we use a family of $m$-dimensional random vectors 
\begin{equation}
\label{D:masses_M}
\vec{\sM}(r) = (\sM_1(r),\dotsm, \sM_m(r)),\quad r\geq 1.
\end{equation}

We will encode the family $(\vec{\sM}(r); r\ge 1)$ via the family of the first hitting times $(\bbT(\bbx;\vec{y});\vec{y}\in L)$ along a {line} $L\subset \R_+^m$. 
More precisely, let us fix a vector $\vec{\rho}\in \R_+^m \setminus \{\vec{0}\}$ and consider the half-line $L = \{\vec{\rho}y; y\ge 0\}$ in the direction of $\vec{\rho}$. 
Define the vector-valued process 
$$\bT = \bT^{\vec{\rho}, {\bW}, Q} = (\bT(y); y\ge 0),$$  
by letting 
\begin{equation}
\label{D:TrhoWQ}
\begin{split}
\bT(y) =\bbT(\bbX;\vec{\rho} y) &= \inf\{\vec{t}: X_{i}(\vec{t}-)= -\rho_i y,\ \  \forall i \text{ s.t. }t_i<\infty\} \\
&= \inf\{\vec{t}: \bbX(\vec{t}-) = -\vec{\rho} y\} .
\end{split}
\end{equation}
\begin{remark}
For a fixed $y$, $\bT(y)$ is analogous to the above deterministic minimizer $\bbT(x;\vec{\rho}y)$, except that here we are (almost) sure that there is a finite random quantity $\vec{S^y}$ such that \sloppy $\sum_{j}X_{i,j}(\vec{S_j^y}-)= -\rho_i y$ for each $i\in [m]$, or equivalently, that 
${T}_i(\bbX;\vec{\rho} y)<\infty$ almost surely for each $i\in [m]$ and all $y\geq 0$. This is due to the fact that $X_{i,j}$ remains bounded for all $i\neq j$, while $X_{i,i}(t)\longrightarrow -\infty$ as $t\to\infty$. 
\end{remark}

It is easy to see that, with probability one, the process $y\mapsto \bT(y)$ is non-decreasing with left-continuous paths. 
By the construction of the minimal solution $\bbT$ in the proof of \cite{Chaumont:2020} Lemma~2.3, one can see that $\bT(y)$ is a (multi-dimensional) stopping time with respect to the filtration generated by $\bbX$. 
Furthermore, there are at most $\sum_{j=1}^m \text{len}({\bw}^j)$ many jumps of the process $\bT$. This is due to the construction \eqref{eqn:XfieldDef1}--\eqref{eqn:XfieldDef2} (in particular, there are $\text{len}({\bw}^j)$ many jumps of $X_{j,j}$, for each $j\in [m]$), joint with the fact that (in our discrete setting, analogously to the $m=1$ setting)
to each jump time $Y$ of $\bT$ corresponds a random index $J\in [m]$, and a unique jump of $X_{J,J}$ (on the $J$th timeline, say at time $S_J(Y)$) such that $X_{J,J}$ starts an excursion at $S_J(Y)$.
See also Remark \ref{R:m_rho_consequences}.
\begin{definition}
\label{def:Delta}
Let $(\vec{\Delta}(r);\,r \geq 1)$ denote the jump sizes of $y\mapsto \bT(y)$ listed in chronological order.
\end{definition}

Recall the matrix $R$ defined in \eqref{D:matrix_R},
and recall that $(\cC(r);r\ge 1)$ is an arbitrary ordering of the connected components of $\G$. Recall that $\TYPE(v)$ is the type of vertex $v$. 
Given a set of vertices $\mathcal{A}$ of $\G$, let us assign to 
$\mathcal{A}$ its $(\vec{\rho},Q)$-\textit{scaled mass}, or \textit{scaled mass} for short,  
as 
\begin{equation}\label{equ_scaled_weight_general}
    \sS(\mathcal{A})\equiv \sS(\mathcal{A};\vec{\rho},Q):=
    \sum_{v \in \mathcal{A}} \rho_{\TYPE(v)} Q_{\TYPE(v),\TYPE(v)} w_v .
\end{equation} 
Then, for each $r$, the scaled mass of $\cC(r)$ simplifies to 
\begin{equation}\label{equ_scaled_weight}
    \sS(\cC(r))\equiv \sS(\cC(r);\vec{\rho},Q):= \sum_{i=1}^m \rho_i Q_{i,i} \sM_i(r),
\end{equation}
where $\vec{\sM}(r)$ is from \eqref{D:masses_M}. It may be more accurate to write $\sS(V(\cC(r)))$ as this quantity depends on the \textit{vertex set} $V(\cC(r))$ for the component $\cC(r)$, but we think this is too cumbersome of notation. 
Let $(\cC^*(r);r\ge 1)$ denote a size-biased reordering of $\{\cC(r):\sS(\cC(r))>0\}$ by their scaled mass. 
For each $r$, we denote by $\vec{\sM}^*(r)$ the corresponding weight vector of the component $\cC^*(r)$. The following is our first main result.
\begin{theorem}\label{thm:discreteEncoding}
Let $\bW$, $Q$  be fixed as above. Then, for each  $\vec{\rho}\in\R_+^m \setminus\{\vec{0}\}$, we have the identity
\begin{equation}\label{eqn:THM1.5}
    \left(\vec{\Delta}(r);\, r \ge 1 \right) \overset{d}{=} \left(R \vec{\sM}^*(r) ;\, r\ge 1 \right).
\end{equation}
\end{theorem}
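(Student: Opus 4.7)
My plan is to extend the one-dimensional BFS encoding behind Corollary~\ref{cor:coro_limic} to the multi-type setting by interpreting the first-hitting process $\bT(y)$ along the half-line $\{\vec{\rho}y : y \geq 0\}$ as the ``time consumption'' of a breadth-first exploration of $\G$. The key ingredients will be (i) a reformulation of the hitting equation \eqref{equ_equation_for_ti}, (ii) the splitting property of the exponentials $\xi_l^j$, and (iii) the minimality defining $\bbT$ from \cite{Chaumont:2020}.

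First, I would rewrite $\bbX(\vec{t}-) = -\vec{\rho} y$ equivalently as
$$ t_i \;=\; \rho_i y + \sum_{j=1}^m R_{i,j}\, J^j(t_j-), \qquad i \in [m], $$
where $J^j(s) := \sum_l w_l^j \mathbf{1}_{\{\xi_l^j/Q_{j,j} \leq s\}}$ is the cumulative type-$j$ mass uncovered on the $j$-th axis by time $s$. In this form $\bT(y)$ is the componentwise-minimal $\vec{t}$ consistent with an allocation of ``budget'' $\rho_i y$ to type-$i$ exploration, reduced by $R_{i,j} w_l^j$ each time a type-$j$ vertex of weight $w_l^j$ is uncovered. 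Between jumps of $\bT$, each $J^j$ is locally constant and $\bT(y)$ grows linearly in $y$ with velocity $\vec{\rho}$; a jump occurs when some $t_j$ first reaches $\xi_l^j/Q_{j,j}$, at which instant $J^j$ increases by $w_l^j$ and $\vec{t}$ jumps by $(R_{1,j} w_l^j, \ldots, R_{m,j} w_l^j)$. This in turn may cross further thresholds on other axes, producing a cascade that stops only when no further vertex can be reached with the current allocation.

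The main content is then to couple this cascade with the BFS of $\G$ starting from the initial vertex $v_0 = (l,j)$. The crucial identity is that the jump of length $R_{i,j} w_l^j = Q_{i,j} w_l^j / Q_{i,i}$ in $t_i$ catches any specific undiscovered type-$i$ vertex $(l', i)$ as its next threshold with probability $1 - e^{-Q_{i,i} w_{l'}^i \cdot R_{i,j} w_l^j} = 1 - e^{-Q_{i,j} w_l^j w_{l'}^i}$, which matches exactly the edge probability between $v_0$ and $(l',i)$ in $\G$; this equality is the multi-type analogue of the Aldous--Limic exponential encoding of Poisson edges. Iterating, the cascade reveals precisely the vertices of the connected component $\cC(v_0)$, and no more (the latter via minimality of $\bbT$), so it consumes exactly $\sM_j(\cC(v_0))$ of $J^j$ for each $j$ and produces a $\vec{t}$-jump equal to $R\vec{\sM}(\cC(v_0))$. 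The size-biased ordering is then immediate from exponential splitting: at each jump $Y_r$ of $\bT$, the next initial vertex $(l',j')$ is the one minimizing $\xi_{l'}^{j'}/(\rho_{j'} Q_{j',j'})$ over undiscovered vertices, so, conditionally on the past, it is chosen with probability proportional to $\rho_{j'} Q_{j',j'} w_{l'}^{j'}$; summing over $(l',j') \in \cC$ yields $\sS(\cC)$, giving the claimed size-biased reordering.

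The main obstacle I foresee is the rigorous coupling of the cascade with BFS described in the previous paragraph: one must carefully track the interleaving of threshold crossings on different type axes when a single newly-uncovered vertex contributes simultaneously to all $m$ coordinates of $\vec{t}$, and use the minimality of $\bbT$ to ensure that the cascade extracts exactly the BFS neighbors of already-explored vertices, with no spurious inclusions. A clean way to manage this is likely a double induction on the component currently being explored and on the number of vertices already processed inside it, combined with a local-rate argument matching the Poisson edge-indicator process of $\G$ to the exponential thinning inherent in $\bbX$.
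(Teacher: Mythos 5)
Your proposal follows essentially the same route as the paper: the edge-probability identity $1-e^{-Q_{i,j}w_l^j w_{l'}^i}$ obtained from exponential splitting is the paper's Lemma \ref{lem:neighbors}, the root selection proportional to $\rho_{j'}Q_{j',j'}w_{l'}^{j'}$ and the summation over a component giving $\sS(\cC)$ is Lemma \ref{lem:Ystopping0} together with the size-biasing argument of Section \ref{sec:BFW_discrete_proofs}, and the identification of each cascade (jump of $\bT$) with $R\vec{\sM}$ of one connected component is Lemma \ref{lem:jump1}, established via the coupling of the field and graph explorations in Proposition \ref{prop:Exprole}. The ``double induction'' you defer as the main obstacle is exactly what the paper carries out: an induction over components and over vertices within a component, with explicit lower bounds on $\bbX$ over the windows $[\vec{S}_k^L,\vec{S}_k^R]$ to show the field does not reach level $-\vec\rho(Y_1+\eps)$ before the whole component is exhausted, plus the reverse inequality showing $\bT(Y_1+)\le \vec{S}_N^R$.
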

\noindent
Its proof is postponed until Section \ref{sec:BFW_discrete_proofs}.

Let us define 
\begin{equation}
\label{D:m_rho_restricted}
[m]_\rho:=\{k\in [m]:\rho_k>0\}.
\end{equation} Given a set $\mathcal A$ of vertices, and a set $I\subset[m]$ of indices we abuse notation and write
\begin{equation}
\label{E:vertices_intersect_indices}
    \mathcal A\cap I \equiv  \mathcal A\cap (\mathbb{N} \times I)= \{v\in\mathcal A: \TYPE(v) \in I\}.
\end{equation}

\begin{remark}
\label{R:m_rho_consequences}
On the event
$\{\cC(l) \cap [m]_{\rho}=\emptyset\}$, neither $\cC(l)$ nor its corresponding vector $\vec{\sM}(l)$ appear in the size-biased list above. 
The encoding via field $\mathbb{X}$ cannot access any such $\cC(l)$ (since the exploration is done only in the direction of $\vec{\rho}$), and therefore the scaled mass of $\cC(l)$ will not appear in the list 
 on the RHS of \eqref{eqn:THM1.5}. 
 Concerning the list on the LHS of \eqref{eqn:THM1.5}, we wish to point out that. 
 as the proof of Theorem \ref{thm:discreteEncoding} will show, the random process $\bT(y)$ can be written as 
\begin{equation*}
\vec{\rho} y + \sum_{x<y}(\bT(x+)-\bT(x))  = \vec{\rho} y + \sum_{r\in J_y} \vec{\Delta}(r),\qquad y\ge 0,
\end{equation*} for some finite and uniformly bounded set of jumps $J_y$.
Therefore,  $T_j(+\infty):=\lim_{y\to\infty} T_j(y)$
and it is almost surely finite, if and only if, $\rho_j = 0$. 
If $\rho_j = 0$, then the time $T_j(+\infty)$ may (and typically does) appear \textit{before} 
some of the excursions (above past infimum) of the process $X_{j,j}$ even begin. The information contained in $X_{j,j}$ on $[T_j(+\infty),+\infty)$, for all $j\in [m]\setminus [m]_{\vec{\rho}}$, could probably be used to reconstruct the connected component sizes of the DCSBM intersected with $[m]\setminus[m]_{\rho}$, however it is not clear if this extra effort would bring any significant benefits. We will exhibit an encoding of the connected component sizes in each probe direction $\vec{\rho}$. By varying $\vec{\rho}$, one can access all the connected components of the DCSBM.

\end{remark}

\subsection{From fields to processes}

Theorem \ref{thm:discreteEncoding} is a random field generalization of Corollary \ref{cor:coro_limic}. Let us first consider a restatement of Corollary \ref{cor:coro_limic} in terms of the excursions of the process $X^q$.

Recall that if $f:[0,\infty)\to \R$ is a c\`adl\`ag function, an interval $(l,r)$ is called an \textit{excursion (above past infima)} interval if
\begin{equation*}
    \inf_{t\le l} f(t) = \inf_{t\le r} f(t) \qquad\text{and}\qquad f(s-)> \inf_{t\le r} f(t) \text{ for all }s\in(l,r).
\end{equation*}
For a function $f$, we will denote by $\cE(f)$ the collection of excursions (above past infima), and by $\cL(f)$ the multiset of excursion lengths $\{r-l: (l,r)\in \cE(f)\}$. 
Finally, we let
\begin{center}
$\cL^{\downarrow}(f)$ be the non-increasing rearrangement of $\cL(f)$,
\end{center}
 provided that it is well-defined (if and only if there are at most finitely many excursions of $f$ longer than $\delta$, for each $\delta>0$).

As discussed in the introduction, the pioneering work of Aldous \cite{Aldous.97} was a base to a number of studies.
With a representation analogous to Corollary \ref{cor:coro_limic} and a corresponding scaling limit for the sequence of appropriately rescaled process $(X^q)_q$ one can often appeal to a quite general theory \cite[Lemma 7 and Proposition 15]{Aldous.97} to conclude relatively easily that the rescaled component weights (of the random graph under consideration) converge in distribution to a random element of $\cvd$ as the size of the graph diverges.
This general approach by Aldous \cite{Aldous.97} is based on relating the excursions of the prelimiting processes with excursions of the limiting stochastic process. 
While the construction of the first hitting times $\bT(y)$ in \cite{Chaumont:2020} is a very useful tool for our analysis, it does not give much insight into the behaviour of the field ``between'' $\bT(y)$ and $\bT(y+)$.
So it is not clear
what a reasonable definition of an excursion would even be in the present context.

A major contribution of this paper is a construction of a single curve $\vec{\gamma}:\R_+\to \R_+^m$ which \textit{combines the information on the $m$ time-lines in an appropriate way} so that the first hitting times of the $\R_+^m$-indexed and $\R^m$-valued field match the first hitting times of a $\R_+$-indexed real-valued process (as it turns out, there are several such processes). 
A precise statement is the following theorem. (We prove this result under weaker assumptions, which are cumbersome to state at this point.)
\begin{theorem}\label{thm:gammaExistence}
    Suppose that $\bbx$ is a field as in \eqref{eqn:discField1}, $\inf_{s\le t} x_{i,i}(s)<0$ for all $t>0$ and $\liminf_t x_{i,i}(t) = -\infty$ and that there exists some vector $\vec \rho=(\rho_1,\ldots,\rho_m)\in(0,\infty)^m$ such that
for each $l\in[m]$ and all $i,j \not= l$
\begin{equation}
\label{equ_symmetry_assumption}
\frac{ x_{i,l}(t) }{ \rho_i }=\frac{ x_{j,l}(t) }{ \rho_j } \text{ for all } t\geq 0.
\end{equation}
Let $\bT(y) = \bbT(\bbx;\vec{\rho}y)$ be the first hitting time of level $-\vec{\rho}y$ for $\bbx$. 
Then, there exists a Lipschitz curve $\vec{\gamma}:\R_+\to \R^m_+$ with non-decreasing coordinates such that
\begin{enumerate}
    \item[(1)] $\|\vec{\gamma}(s)\|_1 = s$ for every $s\ge 0$.
\end{enumerate}
Moreover, for all $y\ge 0$, if $\bT(y)\in \R_+^m$ then 
\begin{enumerate}
    \item[(2)] $
        \vec{\gamma}\left(\|\bT(y)\|_1\right) = \bT(y)$ and
    \item[(3)] for all $i\in[m]$
    \begin{equation*}
        \inf\left\{s\ge 0: \sum_{j=1}^m x_{i,j}\circ\gamma_j(s-) = -\rho_i y\right\} = \|\bT(y)\|_1.
    \end{equation*}
\end{enumerate}
\end{theorem}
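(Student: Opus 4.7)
The plan is to construct $\vec{\gamma}$ as the $L^1$-arc-length reparameterization of $\bT$, extending across its countably many jumps by straight-line segments, and then verify the three claimed properties.

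First, I set $\phi(y) := \|\bT(y)\|_1$. Because each $\rho_i > 0$, the relation $\bbx_i(\bT(y)-) = -\rho_i y$ forces every coordinate of $\bT$ to be strictly increasing in $y$, so $\phi$ is strictly increasing and left-continuous. Let $\{y_k\}$ enumerate the jumps of $\bT$ and let $I_k := [\phi(y_k), \phi(y_k+))$ be the corresponding gap in the range of $\phi$. I then declare
\begin{equation*}
\vec{\gamma}(s) := \begin{cases}
\bT\bigl(\phi^{-1}(s)\bigr), & s \in \phi(\R_+),\\[4pt]
\bT(y_k) + \dfrac{s - \phi(y_k)}{\phi(y_k+) - \phi(y_k)}\bigl(\bT(y_k+) - \bT(y_k)\bigr), & s \in I_k,
\end{cases}
\end{equation*}
extended to boundary points by continuity. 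Properties (1) and (2) are immediate: $\|\vec{\gamma}(s)\|_1 = s$ holds on $\phi(\R_+)$ by definition and is preserved on each $I_k$ since $\bT(y_k+)-\bT(y_k) \ge 0$ componentwise; each $\gamma_i$ is non-decreasing (monotonicity of $T_i$ together with linear interpolation between componentwise-ordered endpoints); and unit $L^1$-speed yields the Lipschitz bound.

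The bulk of the work is property (3). Fix $i \in [m]$ and $y>0$, and set $f_i(s) := \sum_{j} x_{i,j}(\gamma_j(s))$, so that continuity of $\vec{\gamma}$ gives $f_i(s-) = \sum_{j} x_{i,j}(\gamma_j(s)-)$. Whenever $s = \phi(y')$ for some $y'$, then $\gamma_j(s) = T_j(y')$ and therefore $f_i(s-) = \bbx(\bT(y')-)_i = -\rho_i y'$ by definition of $\bT$; this equals $-\rho_i y$ precisely when $y' = y$, giving the value $s = \phi(y) = \|\bT(y)\|_1$ claimed by (3). The only remaining case is that $s$ lies in some interpolation interval $I_k$ with $y_k < y$, and I must rule out $f_i(s-) = -\rho_i y$ there.

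For this interpolation case, which is the heart of the proof, I invoke the symmetry hypothesis \eqref{equ_symmetry_assumption}. Setting $\vec{s} = \bT(y_k)$ and $\vec{t} = \bT(y_k+)$, left-continuity of $\bT$ combined with $\bbx(\bT(y')-) = -\vec{\rho}y'$ for $y'$ approaching $y_k$ from either side yields $\bbx_i(\vec{s}-) = \bbx_i(\vec{t}-) = -\rho_i y_k$. Introduce the off-diagonal functions $h_l(t) := x_{i,l}(t)/\rho_i$ for any $i \ne l$, which are well-defined thanks to \eqref{equ_symmetry_assumption}; the level equation $\bbx_i(\vec{u}-) = -\rho_i y$ then becomes $\rho_i^{-1} x_{i,i}(u_i-) + \sum_{j\ne i} h_j(u_j-) = -y$, whose off-diagonal contribution no longer depends on $i$. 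Along the linear segment $\vec{\gamma}(s) = \vec{s} + \lambda(\vec{t}-\vec{s})$, the simultaneous $m$-dimensional constraints thus collapse to a single scalar condition, and using the excursion-above-past-infimum structure of the $\bT$-jump one verifies that $f_i(s-) \ge -\rho_i y_k > -\rho_i y$ strictly on the open interior of $I_k$.

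The main obstacle is this last step. In the generality of the theorem, $x_{i,i}$ is an arbitrary c\`adl\`ag function with $\liminf = -\infty$ and non-negative jumps, and each off-diagonal $x_{i,j}$ may have both jumps and continuous non-decreasing growth, so the geometry of the interpolation path is intricate: one must control the finitely many field jumps encountered along the segment from $\vec{s}$ to $\vec{t}$, as well as the continuous pieces between them, while keeping $f_i$ above the critical level for every row $i$ simultaneously. The symmetry condition \eqref{equ_symmetry_assumption} is exactly what synchronizes the off-diagonal contributions across rows, so that all $m$ scalar processes $f_i$ hit their respective targets $-\rho_i y$ at the same parameter value $s = \|\bT(y)\|_1$; without it, different rows would generically hit first at different times, breaking (3).
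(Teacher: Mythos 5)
There is a genuine gap, and it is located exactly where you placed ``the heart of the proof'': the curve you build, namely the $L^1$-arc-length reparameterization of $\bT$ with \emph{straight-line} segments across its jumps, does not satisfy property (3), and the key claim that $f_i(s-)\ge -\rho_i y_k$ on the interior of each interpolation interval $I_k$ is false. Take $m=2$ (so that \eqref{equ_symmetry_assumption} is vacuous), $\vec\rho=(1,1)$, and the field as in \eqref{eqn:discField1} given by $x_{1,1}(t)=-t+J\,1_{[t\ge a]}$, $x_{2,2}(t)=-t$, $x_{2,1}(t)=r\,1_{[t\ge b]}$, $x_{1,2}\equiv 0$, with $a<b<a+J$. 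Then $\bT(y)=(y,y)$ for $y\le a$ and $\bT(y)=(y+J,\,y+r)$ for $y>a$, so $\bT$ jumps at $y_k=a$ from $(a,a)$ to $(a+J,\,a+r)$. Along your segment $(a+\lambda J,\,a+\lambda r)$, for $\lambda\in(0,(b-a)/J)$ the second row equals $x_{2,1}(a+\lambda J)+x_{2,2}(a+\lambda r)=-(a+\lambda r)<-a$, continuously and strictly decreasingly in $s$. Hence for $y'=a+\eps$ with $0<\eps<r(b-a)/J$ the value $-\rho_2 y'$ is attained (as a left limit) at some $s^*$ inside $[2a,\,2a+J+r)$, which is strictly smaller than $\|\bT(y')\|_1=2a+2\eps+J+r$; property (3) fails for $i=2$. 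The point your sketch misses is that the path taken \emph{inside} the jump rectangle $[\bT(y_k),\bT(y_k+)]$ matters: coordinate $2$ may only be advanced after the off-diagonal ``credit'' $x_{2,1}$ has actually been collected on coordinate $1$, and a straight line advances the coordinates proportionally, outrunning that credit. (Two smaller issues: your identity $f_i(s-)=\sum_j x_{i,j}(\gamma_j(s)-)$ does not follow from continuity of $\vec\gamma$ alone — if some $\gamma_j$ is constant on a left neighbourhood of $s$ the left limit of the composite is $x_{i,j}(\gamma_j(s))$; and when $\bT$ leaves $\R_+^m$ at a finite level your curve is only defined on a bounded interval, so (1) is not obtained for all $s\ge0$.)

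This is precisely why the paper does not reparameterize $\bT$ but instead constructs $\vec\gamma$ directly from the field: with $g_i$ as in \eqref{equ_function_g}, $f=\sum_i g_i^{-1}$, $\kappa=f^{-1}$ as in \eqref{equ_f_kappa}, and $\gamma_i=g_i^{-1}\tcirc\kappa$ as in \eqref{equ_definition_of_gamma_i}, the traversal of each jump of $\bT$ is a staircase-like path synchronized by the condition $g_1(t_1)=\dotsm=g_m(t_m)$, i.e.\ each coordinate is advanced only when the corresponding off-diagonal increments have occurred; parts (2) and (3) are then Theorem \ref{the_t_and_gamma} and Corollary \ref{cor_relation_between_S_and_T}. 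In the example above the paper's curve first moves $t_1$ from $a$ to $b$, then $t_2$ from $a$ to $a+r$, then $t_1$ onward to $a+J$, and one checks that both rows stay $\ge -\rho_i y_k$ throughout, which is what your straight segment cannot guarantee. So the overall skeleton of your argument (parts (1), (2), and the minimality discussion on the range of $\phi$) is fine, but the construction itself must be replaced; no choice of justification can rescue linear interpolation across the jumps.
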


An immediate consequence of Theorems \ref{thm:discreteEncoding} and \ref{thm:gammaExistence} is the following result. 
\begin{theorem}\label{thm:gammaAndField}
    Let $\bW$, $Q$, $R$ and $\bbX$ be as in Theorem \ref{thm:discreteEncoding}, and let $\G(\bW,Q)$ be the corresponding DCSBM. Suppose that, in addition, 
    \begin{equation}
    \label{equ_symmetry_stoch}
    R_{i,j} = \frac{Q_{i,j}}{Q_{j,j}} = \rho_i \nu_j, \ \text{ for all } i \neq j,
    \end{equation}
    for some vectors $\vec{\rho}, \vec{\nu}\in(0,\infty)^m$. 
    Recall that $(\vec{\sM}(r);r\ge 1)$ are the vector-valued component weights of $\G(\bW,Q)$ arranged in some arbitrary order. Then there exists a vector valued curve $\vec{\gamma}$ with non-decreasing coordinates such that
    \begin{enumerate}
        \item The ordered excursion lengths of $\sum_{j=1}^m X_{i,j} \circ \gamma_j(t)$ are equal in law to the reording of $(\|R\vec{\sM}(r)\|_1 ;r\ge 1)$; i.e.
        \begin{equation*}
            \cL^{\downarrow}\left( \sum_{j=1}^m X_{i,j}\circ \gamma_j\right) \overset{d}{=} {\rm ORD}\left(\|R\vec{\sM}(r)\|_1;r\ge 1\right).
        \end{equation*}
        \item If $((l_p,r_p);p\ge 1)$ are the excursion intervals of $\sum_{j=1}^m X_{i,j}\circ\gamma_j$ arranged chronologically, then 
        \begin{equation*}
            \left(\vec{\gamma}(r_p)-\vec{\gamma}(l_p); p\ge 1\right)\overset{d}{=} \left(R \vec{\sM}^*(p);p\ge 1\right).
        \end{equation*}
    \end{enumerate}
\end{theorem}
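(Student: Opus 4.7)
The plan is to deduce the theorem as a combination of Theorems \ref{thm:discreteEncoding} and \ref{thm:gammaExistence}, essentially reading off excursions of the real-valued process $f_i(s):=\sum_{j=1}^{m}X_{i,j}(\gamma_j(s))$ from jumps of the vector-valued hitting-time process $y\mapsto\bT(y)$. First I would verify that the stochastic symmetry hypothesis \eqref{equ_symmetry_stoch} forces the pathwise symmetry hypothesis \eqref{equ_symmetry_assumption} of Theorem \ref{thm:gammaExistence} to hold almost surely for the random field $\bbX$. This is direct: by \eqref{eqn:XfieldDef1}, for $i\neq j$ one has $X_{i,j}(t)=R_{i,j}N_j(t)$ with the pure-jump process $N_j$ depending only on $j$ and on $(\xi_l^j)_l$; under \eqref{equ_symmetry_stoch} we have $R_{i,j}/\rho_i=\nu_j$ which is independent of $i$, so $X_{i,j}/\rho_i=X_{k,j}/\rho_k$ pathwise for all $i,k\neq j$. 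The remaining hypotheses $\inf_{s\leq t}X_{i,i}(s)<0$ and $\liminf_{t}X_{i,i}(t)=-\infty$ hold almost surely by construction of $X_{i,i}$ (the drift is strictly negative and the jump part is bounded by the finite sum $\sum_l w_l^i$), so Theorem \ref{thm:gammaExistence} produces a random curve $\vec{\gamma}$ satisfying properties (1)--(3).

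For part (2), I would list chronologically the jump times $(y_p)_{p\geq 1}$ of the non-decreasing process $y\mapsto\|\bT(y)\|_1$ and set $l_p:=\|\bT(y_p)\|_1$, $r_p:=\|\bT(y_p+)\|_1$. Theorem \ref{thm:gammaExistence}(3) identifies $\|\bT(y)\|_1$ as the left-continuous first-hitting time of the level $-\rho_i y$ by $f_i$. Because $f_i$ has only positive jumps (from the pure-jump terms $X_{i,j}$), continuous non-increasing drift $-\gamma_i(\cdot)$, and $f_i(s)\to-\infty$ as $s\to\infty$, a standard correspondence identifies the jump intervals of this hitting-time process with the excursion intervals of $f_i$ above its past infima. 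Hence $((l_p,r_p);p\geq 1)$ is precisely the chronological listing of excursion intervals of $f_i$, and Theorem \ref{thm:gammaExistence}(2) gives
\begin{equation*}
\vec{\gamma}(r_p)-\vec{\gamma}(l_p) \;=\; \bT(y_p+)-\bT(y_p) \;=\; \vec{\Delta}(p),
\end{equation*}
so the distributional identity of Theorem \ref{thm:discreteEncoding} delivers part (2).

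For part (1), the Lipschitz identity $\|\vec{\gamma}(s)\|_1=s$ combined with the coordinate-monotonicity of $\vec{\gamma}$ gives the excursion length $r_p-l_p=\|\vec{\gamma}(r_p)-\vec{\gamma}(l_p)\|_1=\|\vec{\Delta}(p)\|_1$. Applying $\|\cdot\|_1$ to both sides of the identity in Theorem \ref{thm:discreteEncoding}, and observing that $\rho_i>0$ for every $i$ implies every connected component of $\mathcal{G}(\bW,Q)$ has strictly positive scaled mass (so the size-biased list exhausts the full component multiset), one obtains $(\|\vec{\Delta}(p)\|_1;p\geq 1)\stackrel{d}{=}(\|R\vec{\sM}^*(p)\|_1;p\geq 1)$, which in non-increasing rearrangement coincides with $\mathrm{ORD}(\|R\vec{\sM}(r)\|_1;r\geq 1)$.

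The one step requiring a little care is the correspondence between jump intervals of $y\mapsto\|\bT(y)\|_1$ and excursion intervals of $f_i$ above past infima. This is classical when the continuous drift is strictly decreasing, but here $\gamma_i$ may be constant on intervals on which some other $\gamma_j$ is active; one should check that flat pieces of $f_i$ lying strictly above the past infimum merely extend the ongoing excursion, while flat pieces at the past infimum level produce no jump of the hitting-time process, so the bijection survives. This verification is the main technical point, but it is essentially routine given Theorem \ref{thm:gammaExistence}(3).
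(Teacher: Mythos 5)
Your proposal follows exactly the route the paper takes: the paper states Theorem \ref{thm:gammaAndField} as an immediate consequence of Theorems \ref{thm:discreteEncoding} and \ref{thm:gammaExistence} (with Corollary \ref{cor_relation_between_S_and_T} supplying the identification $S_i(y)=\|\bT(y)\|_1$ and the jump-size formula), which is precisely your reduction of excursion intervals of $\sum_j X_{i,j}\circ\gamma_j$ to the jumps of $y\mapsto\|\bT(y)\|_1$ and then to $\vec\Delta(p)\overset{d}{=}R\vec\sM^*(p)$. Your additional verifications (that \eqref{equ_symmetry_stoch} yields \eqref{equ_symmetry_assumption} pathwise, the a.s.\ hypotheses on $X_{i,i}$, and the first-passage/excursion correspondence) are correct and in fact spell out more detail than the paper itself provides for this step.
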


\subsection{Comments on models with condition \texorpdfstring{\eqref{equ_symmetry_stoch}}{(2.16)}}
\label{S:commentsONmodels}
\subsubsection{Restrictions with few blocks}\label{sec:fewblocks}

Let us begin by noting that whenever there are two blocks (i.e. $m = 2$) the condition \eqref{equ_symmetry_stoch} is always true provided that $Q_{i,j}>0$ for all $i,j\in[2]$. In fact, \eqref{equ_symmetry_assumption} in Theorem \ref{thm:gammaExistence} is always true in the case where there are just two types.

Assumption \eqref{equ_symmetry_stoch} starts to become more interesting in the case where $m = 3$. By first examining \eqref{equ_symmetry_assumption} in Theorem \ref{thm:gammaExistence} as well as the form of the field $\bbX$ in \eqref{eqn:XfieldDef1}, we see that for any $Q$ we can set
\begin{align*}
    \rho_{1} &= \frac{Q_{1,2}Q_{1,3}}{Q_{1,1}}&
    \rho_2 &= \frac{Q_{2,1}Q_{2,3}}{Q_{2,2}}&
    \rho_3 &=\frac{Q_{3,1}Q_{3,2}}{Q_{3,3}}
\end{align*}
and
\begin{align*}
    \nu_1&= \frac{1}{Q_{2,3}} & \nu_2 &= \frac{1}{Q_{1,3}} & \nu_3 &= \frac{1}{Q_{1,2}}.
\end{align*}
Indeed, looking at distinct $i,j,k\in[3]$ we have 
\begin{equation*}
\rho_i\nu_j = \frac{Q_{i,j}Q_{i,k}}{Q_{i,i}}\cdot\frac{1}{Q_{i,k}} = \frac{Q_{i,j}}{Q_{i,i}} = R_{i,j}.
\end{equation*}
In particular, provided that $Q$ is a symmetric matrix with strictly positive entries Theorem \ref{thm:gammaAndField} is always applicable for particular (and explicit) choices of $\vec{\rho}$ and $\vec{\nu}$.

A simple dimension counting argument implies that  \eqref{equ_symmetry_stoch} can not be satisfied in great generality for $m>3$. Indeed, the collection of symmetric $m\times m$ matrices $Q$ with positive entries forms a $\binom{m+1}{2}$ dimensional manifold, while the collection of matrices $Q$ that satisfy \eqref{equ_symmetry_stoch} is only of dimension $3m$ ($m$ for the diagonal entries of $Q$ and $m$ for each the vectors $\vec{\rho},\vec{\nu}$).

   \subsubsection{Link with \texorpdfstring{\cite{KL.21}}{[32]}} Condition \eqref{equ_symmetry_assumption} is equivalent to \eqref{equ_symmetry_stoch} in our stochastic setting, and we furthermore have an interesting probabilistic interpretation.

\begin{lemma}
The symmetric matrix $Q$ and the vector $\vec{\rho}\in(0,\infty)^m$ satisfy \eqref{equ_symmetry_stoch} if and only if there exist $q_i>0$, $i\in[m]\cup\{0\}$, such that vertices $(l,i)$ and $(k,j)$ of $\G(\bW,Q)$ are connected by an edge with probability 
$1-e^{-q_0 \rho_i w_l^i\rho_j w_k^j}$ if $i\neq j$  and with probability $1-e^{-q_i \rho_i w_l^i\rho_j w_k^j}$ if $i=j$.
\end{lemma}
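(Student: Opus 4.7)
The plan is to translate the edge-probability statement into an algebraic condition on $Q$ and then check the equivalence with \eqref{equ_symmetry_stoch}. In $\G(\bW,Q)$ the number of edges between $(l,i)$ and $(k,j)$ is Poisson with mean $Q_{i,j}w_l^iw_k^j$, so the probability of being connected equals $1-\exp(-Q_{i,j}w_l^iw_k^j)$. Matching this with the expression in the lemma (and using that $\rho_i=\rho_j$ when $i=j$) reduces the claim to the following: \eqref{equ_symmetry_stoch} holds if and only if there exist $q_0,q_1,\ldots,q_m>0$ such that
\[
Q_{i,j}=q_0\,\rho_i\rho_j\ \text{ for }i\neq j,\qquad Q_{i,i}=q_i\,\rho_i^2.
\]

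For the forward direction, assume \eqref{equ_symmetry_stoch}. Since $Q$ is symmetric, for every $i\neq j$,
\[
Q_{j,j}\rho_i\nu_j \;=\; Q_{i,j} \;=\; Q_{j,i} \;=\; Q_{i,i}\rho_j\nu_i,
\]
hence $Q_{i,i}\nu_i/\rho_i = Q_{j,j}\nu_j/\rho_j$ for all $i\neq j$. Denote this common value by $q_0>0$. Then
\[
Q_{i,j} \;=\; Q_{j,j}\rho_i\nu_j \;=\; \rho_i\rho_j\cdot\frac{Q_{j,j}\nu_j}{\rho_j} \;=\; q_0\,\rho_i\rho_j,
\]
and setting $q_i:=Q_{i,i}/\rho_i^2>0$ yields the required factorization.

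Conversely, assume $Q_{i,j}=q_0\rho_i\rho_j$ for $i\neq j$ and $Q_{i,i}=q_i\rho_i^2$. Define $\nu_j:=q_0/(q_j\rho_j)>0$ for every $j\in[m]$. Then, for every $i\neq j$,
\[
R_{i,j} \;=\; \frac{Q_{i,j}}{Q_{j,j}} \;=\; \frac{q_0\rho_i\rho_j}{q_j\rho_j^2} \;=\; \rho_i\cdot\frac{q_0}{q_j\rho_j} \;=\; \rho_i\nu_j,
\]
which is precisely \eqref{equ_symmetry_stoch}. The only (mild) subtlety worth emphasizing is that condition \eqref{equ_symmetry_stoch} involves two a priori independent vectors $\vec\rho,\vec\nu$, whereas the factorization collapses this into a single scalar $q_0$; this collapse is forced exactly by the symmetry $Q_{i,j}=Q_{j,i}$ carried out in the forward step.
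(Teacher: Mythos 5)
Your proof is correct and follows essentially the same route as the paper: use the symmetry of $Q$ together with \eqref{equ_symmetry_stoch} to deduce that $Q_{i,i}\nu_i/\rho_i$ is a common constant $q_0$, then set $q_i=Q_{i,i}/\rho_i^2$ to obtain the factorization $Q_{i,j}=q_0\rho_i\rho_j$, $Q_{i,i}=q_i\rho_i^2$. You additionally spell out the converse direction (choosing $\nu_j=q_0/(q_j\rho_j)$) and the translation from the Poisson edge probabilities to the algebraic condition, both of which the paper leaves implicit as straightforward.
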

\begin{proof}
    Since $Q$ is a symmetric matrix, due to  \eqref{equ_symmetry_stoch} we have for all $i\neq j$
    $$
    Q_{j,j} \rho_i \nu_j = Q_{i,i} \rho_j \nu_i 
    \Longleftrightarrow
    \frac{Q_{j,j}\nu_j}{\rho_j} = \frac{Q_{i,i}\nu_i}{\rho_i}.
    $$
    Let $q_0:= Q_{1,1}\nu_1/\rho_1= Q_{j,j}\nu_j/\rho_j$, $j\in [m]$, and $q_i:=Q_{i,i}/\rho_i^2$, $j\in[m]$. 
    It is straight-forward to see that $Q_{i,j} = q_0 \rho_i \rho_j$ for $i\neq j$ and $Q_{i,i}=q_i\rho_i^2$.
\end{proof}

The above lemma gives a connection with the setting of \cite{KL.21}.
The class of models satisfying \eqref{equ_symmetry_stoch} includes the SBMs analyzed in \cite{KL.21}, but it is much larger since it allows for varying intra-block connection probabilities over types. This connection will be important in our sequel paper \cite{CKL.24+}.

\subsubsection{An epidemiological interpretation}

Let us now describe possible epidemiological interpretation. We have a population of $N = \sum_{j=1}^m \len(\bw^{j})$ many individuals segmented into $m$ many sub-types. Each individual $(l,j)$ of type $j$ has some propensity $w_l^j$ of both catching or transmitting a disease to their neighbors. The factor $\nu_j$ represents the propensity of a type $j$ individual to transmit the disease to others, for example by not taking preventative measures to stop the spread of the disease. Finally, there is some likelihood that type $j$ individuals come into contact with type $i$ individuals which is represented by
\begin{equation*}
    \frac{\tilde{\rho}_i\tilde{\rho}_j}{\tilde{\rho}_i+\tilde{\rho}_j}\qquad\textup{where}\qquad \tilde{\rho}_{i} = Q_{i,i} \rho_i.
\end{equation*}

To model a disease spreading through the population we can use a direct graph where a direct edge from $u$ to $v$ means individual $u$ infected individual $v$. 
Moreover our graphs is built by independently adding an edge from $(l,i)$ to $(r,j)$ with probability
\[
1-e^{-w_{l}^{i}w_{r}^{j}\frac{\tilde{\rho}_{i}\tilde{\rho}_{j}}{\tilde{\rho}_{i}+\tilde{\rho}_{j}}\nu_{j}},\qquad \forall i\neq j.
\]

Forgetting direction of the edges, we see that an (undirected) edge between $(i,l)$ and $(j,r)$ appears with the probability $1-e^{-p},$
where
\begin{align*}
p & =w_{l}^{i}w_{r}^{j}\left(\frac{\tilde{\rho}_{i}\tilde{\rho}_{j}}{\tilde{\rho}_{i}+\tilde{\rho}_{j}}\nu_{j}+\frac{\tilde{\rho}_{j}\tilde{\rho}_{i}}{\tilde{\rho}_{i}+\tilde{\rho}_{j}}\nu_{i}\right)\\
 & =w_{l}^{i}w_{r}^{j}\left(\frac{Q_{ij}\tilde{\rho}_{j}}{\tilde{\rho}_{i}+\tilde{\rho}_{j}}+\frac{Q_{ji}\tilde{\rho}_{i}}{\tilde{\rho}_{i}+\tilde{\rho}_{j}}\right)=w_{l}^{i}w_{r}^{j}Q_{ij}\qquad \forall i\neq j.
\end{align*} Thus the (weakly) connected components in this disease model are equal in law to the connected components that Theorem \ref{thm:gammaAndField} can analyze.

\section{Discussion}

\subsection{Past and related work}
To the best of our knowledge, the ``stochastic process encoding'' for analyzing the connected components of critical random graphs which are not rank-1 appeared until now only a few times in the literature. The first such work  is by Dembo, Levit and Vadlamani \cite{DLV.19} on the so-called quantum Erd\H{o}s-R\'{e}nyi (QER) random graph. In this model, each vertex in the standard Erd\H{o}s-R\'{e}nyi graph is replaced by a copy of a circle $S^1$ cut into arcs according to a Poisson process, and these arcs then become the vertices of the QER random graph. Edges are included subsequently according to another independent Poisson process.

A more closely related model to ours appears in the works of Federico \cite{Federico.19} and Wang \cite{Wang.23} on the near-critical bipartite Erd\H{o}s-R\'{e}nyi random graph. While the actual explorations used in these papers differ from ours, their encodings correspond to a join of two separate explorations (one explores the left-vertex set, and the other the right-vertex set of the bipartite graph) into a single stochastic process, which can be analyzed via weak convergence techniques. As mentioned above in Section \ref{sec:fewblocks}, our encoding is completely general in the rank-2 case, whenever $Q_{i,j}>0$ for all $i,j$. Therefore our encoding does not technically cover the bipartite case where $Q_{i,i} = 0$ for both $i$ and so we cannot encode the graphs studied by \cite{Federico.19,Wang.23}. However, by taking the intra-block connection probabilities sufficiently small and using the result of Janson \cite[Corollary 2.12]{Janson.10}, one can see that the bipartite ER graph is asymptotically equivalent to a model with $Q_{i,i}>0$ and therefore one we can encode. See also \cite[Section 6.7]{vanderHofstad.17}. This approach is taken by DC in \cite{Clancy.24+} to analyze the general rank-2 multiplicative random graphs.

A different approach has been quite successful for analyzing other classes of non-rank-1 random graphs. A general method for proving that the connected components of certain critical random graphs,  viewed \textit{as metric measure spaces}, lie in the \textit{basin of attraction} of the continuum limit of critical Erd\H{o}s-R\'{e}nyi random graphs of Addario-Berry et al.~\cite{ABBG.10,ABBG.12}, was developed by Bhamidi et al.~in \cite{BBSW.14}. Roughly speaking, this method consists in showing that the barely subcritical random graph satisfies certain asymptotic properties (this gives the ``blobs'' of \cite{BBSW.14}), and that the evolution of the model from the barely subcritical  to the critical regime is approximately that of the Aldous standard multiplicative coalescent \cite{Aldous.97} (giving the ``blob-level superstructure'' of \cite{BBSW.14}) and converge to the continuum random graph \cite{BSW.17}. This program has more recently enabled Blanc-Renaudie et al.~\cite{Blanc_Renaudie:2024} (resp.~Bhamidi et al.~\cite{BBBSW.23}) to prove that the connected components of the near-critical percolation on the $d$-dimensional hypercube (resp.~on a graph converging to an $L^3$-graphon) converge to the continuum random graph of \cite{ABBG.10,ABBG.12}. It is not likely that this approach would apply in our setting, which is more closely related to the restricted multiplicative merging and the interacting eternal multiplicative coalescents of \cite{KL.21}, than to the Aldous standard multiplicative coalescent. 

In addition to the aforementioned papers, several works used exploration processes and their related height processes (constructed by Duquesne and Le Gall in \cite{DL.02}) for analyzing scaling limits of multi-type Galton-Watson trees. In \cite{Miermont.08}, Miermont introduces a ``reduction of types'' argument to show that (modulo some scaling) the height process of a critical multitype Galton-Watson forest with finite variance converges to a reflected Brownian motion (which also encodes the limit for a single type Galton-Watson forest). This result was generalized in the case of offspring distributions in the domain of attraction of an $\alpha$-stable random variable by Berzunza \cite{BerzunzaOjeda.18}, and in the case of infinitely many types by de Raph\'elis \cite{deRaphelis.17}.

\subsection{Future work}

As mentioned in the Introduction, in this report we initiate our study of the degree corrected stochastic block model, which is continued in our work in progress \cite{CKL.24+}. 
In this paper we lay out the encoding of the graph model via a random field, and (under additional assumptions) develop a technique for transforming the field encoding  into an encoding by a conventional real-valued stochastic process.

Our subsequent work \cite{CKL.24+} is concerned with scaling limits. 
Define $\sigma_r(\bx) := \sum_{p=1}^\infty x_p^r$.
More precisely, we study the behaviour of the sequence of graphs $\G(\bW^{(n)},Q^{(n)})$, as $n\to \infty$, under the following asymptotic conditions on $\bW = \bW^{(n)}$ and $Q = Q^{(n)}$: there exist sequences $0<a_n\to 0$, $\vec{\rho}^{(n)}\to \vec{\rho}\in(0,\infty)^m,\vec{\nu}^{(n)} \to \vec{\nu}\in(0,\infty)^m$, and for each $i\in[m]$ there exist  $\alpha_i>0,\beta_i\ge 0, \alpha_i, \beta_i,\lambda_i\in\R$, and $\bc_i = (c_{i,1},c_{i,2},\dotsm)\in \ell^3_\downarrow$, such that for each $i\in[m]$ and $j\neq i$
\begin{align}\label{eqn:asymptLimitsforWeights}
    &\frac{w_p^{(n),i}}{\sigma_2(\bw^{(n),i})} \to c_{i,p}, & &\text{ and } \ \sigma_2(\bw^{(n),i})\to0,\\ \label{eqn:asymptLimitsForWeights2} &\frac{\sigma_3(\bw^{(n),i})}{\sigma_2(\bw^{(n),i})^3} \to \beta_i + \sigma_3(\bc_i), & &\text{ where }\ \beta_i>0\ \textup{ or }\ \sigma_2(\bc^i) = +\infty,\\
  \label{eqn:asymptLimitsForWeights3}  &\frac{\sigma_2(\bw^{(n),i})}{a_n}\to \alpha_i,  &&\text{ and } \ \frac{R_{i,j}^{(n)}}{a_n} = \frac{Q_{i,j}^{(n)}}{a_n Q_{j,j}^{(n)}} = \frac{\rho_i^{(n)} \nu_j^{(n)}}{a_n},\\
  \label{eqn:asymptLimitsForWeights4}  &\text{and in addition }\ Q_{i,i}^{(n)} = \frac{1}{\sigma_2(\bw^{(n),i})}+ \lambda_{i}+o(1).
\end{align}
 Hypotheses \eqref{eqn:asymptLimitsforWeights}-\eqref{eqn:asymptLimitsForWeights2} and \eqref{eqn:asymptLimitsForWeights4} are the well-known conditions arising from \cite{AL.98}. Informally, the left-hand side of \eqref{eqn:asymptLimitsForWeights3} guarantees that the weights of all the type $i$ vertices are roughly of the same order, while the right-hand side is a technical condition which allows us to apply the results obtained in Sections \ref{sec:excursions_along_a_curve} and \ref{sec:generalexcursionrep} of the present work. One can also check that these assumptions are the natural inhomogeneous generalizations of \cite{KL.21}.

From hypotheses \eqref{eqn:asymptLimitsforWeights}--\eqref{eqn:asymptLimitsForWeights4} 
(without using the RHS in \eqref{eqn:asymptLimitsForWeights3})
it is not hard (applying results from \cite{AL.98,Limic.19}) to derive the scaling limit for each of the processes $X_{k,i}^{(n)}$ in \eqref{eqn:XfieldDef1},
and moreover the joint scaling limit for the matrix-valued process $(X_{i,j})_{i,j\in[m]}$.
In addition, we already obtained an encoding of the finite graph, so one might think that the scaling limit theorem for our DCSBM model would be a standard extension of \cite{Aldous.97, AL.98}.
However, we face several non-trivial technical obstacles when passing from the random fields $\bbX^{(n)}$ to the stochastic processes $\sum_{i=1}^m X_{k,i}^{(n)}\circ\gamma^{(n)}_i(t)$, $k\in [m]$. Without going into details, here we give an indication of our approach developed in \cite{CKL.24+}.
\begin{itemize}
    \item Given a deterministic 
 sequence of fields $\bbx^{(n)}=(x_{i,j}^{(n)})_{i,j\in[m]}$, $n\ge 1$, 
  converging to the field
 $\bbx$, where each  $\bbx^{(n)}$ satisfies the hypotheses of  Theorem \ref{thm:gammaExistence}, we establish convergence, under appropriate assumptions, for the sequence of curves $\vec{\gamma}^{(n)}$ (constructed via Theorem \ref{thm:gammaExistence}).
    \item Since there is no total order on the space of $\R_+^m$-valued vectors, we construct a Polish space $\widetilde{\ell^{2,m}}$, which is an analog of $\ell^2_\downarrow$ for sequences of vectors with square summable norms. This construction is analogous to the construction of the space of graphons \cite{Chatterjee.17}.
    \item We establish tightness in $\widetilde{\ell^{2,m}}$ for the sequence of vector-valued connected component weights of $\G(\bW^{(n)},Q^{(n)})$, $n\geq 1$. 
    \item We improve on the work of Dhara et al.~\cite{DvdHvLS.20}, by weakening  the conditions on the limit of encoding processes guaranteeing the convergence of the corresponding sequence of excursion length vectors. 
    
    \item 
    We show that the paths $t\mapsto \sum_{i=1}^m X_{k,i}^{(\infty)}\circ\gamma_i^{(\infty)}(t)$, where $X^{(\infty)}_{k,i}$ is the scaling limit of $X_{k,i}^{(n)}$, for all $k,i\in[m]$, satisfy the above conditions.
\end{itemize}

In Section \ref{sec:BFWsection}, we introduce two explorations of the DCSBM. The first Exploration \ref{explore:field} is via the random field $X_{i,j}$ from \eqref{eqn:XfieldDef1}, and the second one uses the random graph defining data. We prove in Proposition \ref{prop:Exprole} that these two explorations are equivalent, and soon after we prove Theorem \ref{thm:discreteEncoding}.

In Section \ref{sec:excursions_along_a_curve}, we recall and expand on various elementary properties of the multi-dimensional first hitting times of \cite{Chaumont:2020} for deterministic fields $\bbx$. We then analyze these first hitting times whenever the field $\bbx$ is sufficiently smooth, and prove Theorem \ref{thm:gammaExistence} in this special case. 
This analysis relies on several properties of homeomorphic inverses which fail in general.

In order to overcome this difficulty, in Section \ref{S:non_decreasing_inver_comp} we are lead to introducing a novel composition-like operator $\tcirc$, which has a remarkably good behaviour with respect to taking generalized inverses. For example, if $g$ is an unbounded non-decreasing rcll function with left limits, and if $g$ is strictly increasing at $0$ in addition, then
$g^{-1}\tcirc g = g \tcirc g^{-1} =\operatorname{id}$, where
$g^{-1}$ is the right-continuous generalized inverse of $g$.
It is easy to see that the above identity fails in general (see also examples in Section \ref{S:non_decreasing_inver_comp})if $\tcirc$ is replaced by $\circ$.

 Theorem \ref{thm:gammaExistence} is proved in full generality  in Section \ref{sub:construction_of_a__special_curve}.

\section{Breadth-first walk}\label{sec:BFWsection}
Fix ${\bW} = ({\bw}^1,{\bw}^2,\dotsm, {\bw}^m)$, a collection of $m$ finite length vectors 
${\bw}^i\in \ell^2_\downarrow$. Furthermore fix an $m\times m$ symmetric matrix $Q = (Q_{i,j};\, i,j\in[m])$  with strictly positive entries along the diagonal and non-negative off-diagonal entries. 
All the processes considered in this section  will depend on ${\bW}$ and $Q$, but this will be mostly suppressed from the notation. 

Let 
\begin{equation}\label{eqn:Rvec}
    \vec{R}_j = (R_{1,j},\dotsm, R_{m,j})^T\in \R_+^m,
\end{equation} so that the $i^\text{th}$ coordinate of $\vec{R}_j$ is $R_{i,j}$.

\subsection{Two Explorations} 

The exploration we construct will involve $m$ time-lines corresponding to $m$ different coordinates (or types). 

We will use the stochastic processes $X_{i,j}$ in \eqref{eqn:XfieldDef1} to construct the exploration.
We keep track of two sequences of sets $(\mathcal{U}_k)_k$, $(\mathcal{U}_k^*)_k$ of {\em unexplored} vertices.
The initial set $\mathcal{U}_0$ consists of all the vertices $\mathcal{U}_0  = \{(l,i):l\le \len(\bw^i),i\in[m]\}$. 
At step $k$ we maintain two stacks $\mathcal{A}_k$, $\mathcal{A}^*_k$ of {\em active} vertices.
The initial stack $\mathcal{A}_0 = ()$ is empty. 
In addition we keep track of the set of {\em dead} vertices, which we denote by $\mathcal{D}_k$ in step $k$.
A given vertex needs to be active in order to become dead in a later step, so $\mathcal{D}_0 = ()$ is empty as well.
Recall that we write $[m]_{\vec{\rho}} = \{i\in[m]:\rho_i>0\}$, and at step $k$ denote by $\mathcal{U}^{\vec{\rho}}_k$ all the unexplored vertices $\mathcal{U}_k\cap \N\times [m]_{\vec{\rho}}$
with strictly positive $(Q,\vec{\rho})$-scaled mass. 

We will inductively construct two sequences of $m$-dimensional stopping times \sloppy $\vec{S}^{\ast}_k= (S_{k;1}^{\ast},S_{k;2}^{\ast},\dotsm, S_{k;m}^{\ast})$ where $\ast\in \{L,R\}$, with respect to the filtration $\mathscr{F}(\vec{t})$ generated by $\bbX$. We set $\vec{S}^R_0 = \vec{0}$.  Let $\zeta_k$ denote the number of connected components that have been discovered up-to step $k$. Note that $\zeta_0 = 0$. In the following exploration algorithm, we itemize the steps in order to facilitate the understanding of the subsequent proof.
\begin{exploration}
[Field Exploration]\label{explore:field}
Let $k=1$. 
\begin{enumerate}
\item[(\namedlabel{X0}{X0})] \textbf{While} either $\mathcal{A}_{k-1}\neq ()$ or $\mathcal{U}^{\vec{\rho}}_{k-1} \neq \emptyset$ do as follows:
    \item [(\namedlabel{X1}{X1})] \textbf{Orientation:}\begin{enumerate}
        \item On $\{\mathcal{A}_{k-1} = ()\} \cap \{\mathcal{U}^{\vec{\rho}}_{k-1} \neq \emptyset\}$ our exploration recorded all the vertices of the initial $\zeta_{k-1}$ components intersecting $[m]_{\vec{\rho}}$, and there are still some unexplored vertices left in $V(\G)\cap [m]_{\vec{\rho}}$.
        The algorithm increments $\zeta_{k} = \zeta_{k-1}+1$, and defines 
        $$
        f(l,i;k):= \frac{\xi_{l}^i}{\rho_i Q_{i,i}} -\frac{S_{k-1;i}^R}{\rho_i},\quad (l,i) \in \mathcal{U}^{\vec{\rho}}_{k-1},
        $$
        and
    \begin{equation*}
        \varpi(k) = (l_k,i_k) = \operatornamewithlimits{argmin}_{(l,i) \in \mathcal{U}_{k-1}}  f(l,i;k), \qquad Y_{\zeta_k} = \min_{(l,i) \in \,\mathcal{U}_{k-1}} f(l,i;k).
    \end{equation*}
     Set $\mathcal{A}_{k-1}^* = (\varpi(k))$ and $\mathcal{U}^*_{k-1} = \mathcal{U}_{k-1} \setminus\{\varpi(k)\}$. 
     We call $\varpi(k)$ the \textit{root} of the $\zeta_k$th component. For each $i\in [m]$, set $S_{k;i}^L = S_{k-1;i}^R + \rho_i Y_{\zeta_k}$, and let
     $\vec{S}_{k}^R = \vec{S}_k^L + w_{\varpi(k)}\vec{R}_{i_k}$, where here and below $i_k$ is the above defined type of $\varpi(k)$.  
     We define $N_k = k$ in this case (the reasons for this will be clear soon).
     \item 
     Otherwise on $\{\mathcal{A}_{k-1} \neq ()\}$
      we have (by induction) \begin{equation*}\mathcal{A}_{k-1} = \Big(\varpi(k),\dotsm, \varpi(N_k)\Big),
     \end{equation*} for some $N_k\geq k$. Set $\mathcal{A}^*_{k-1} := \mathcal{A}_{k-1}$, $\mathcal{U}_{k-1}^* = \mathcal{U}_{k-1}$, and $\zeta_k = \zeta_{k-1}$. 
    \end{enumerate}
Note that $N_k-k+1$ equals the length of the stack $\mathcal{A}^*_{k-1}$ almost surely.
\item [(\namedlabel{X2}{X2})]\textbf{The (unexplored) neighbors of $\varpi(k)$}: \begin{enumerate}
    \item The (newly discovered) neighbors of $\varpi(k)$ are the vertices 
\begin{equation*}
   \mathcal{B}_k:= \left\{(l,i)\in \mathcal{U}_{k-1}^*: \frac{\xi_l^i}{Q_{i,i}} \in[S_{k;i}^L, S^R_{k;i}) .\right\}
\end{equation*} Let $\chi(k) = \#\mathcal{B}_k$ be the cardinality of $\mathcal{B}_k$.
\item On $\{\chi(k)=0\}$ the algorithm jumps to \eqref{X3}.
\item Otherwise on  $\{\chi(k)>0\}$, we use the jump times $\xi_l^i/Q_{i,i}\in [S_{k;i}^L, S_{k,i}^R)$ to order the elements of $\mathcal{B}_k$ as follows: Set
\begin{equation*}
    \varpi(N_k+1),\dotsm, \varpi(N_k+\chi(k)),
\end{equation*}
in the almost surely unique way such that 
$i\mapsto \TYPE(\varpi(N_k+i))$ is non-decreasing on $\{1,\ldots,\chi(k)\}$, and such that
ties are broken according to the rule
 $\{\TYPE(\varpi(N_k+i)) = \TYPE(\varpi(N_{k}+j)), i<j\}\subset \{\xi_{\varpi(N_k+i)}< \xi_{\varpi(N_k+j)} \}$, almost surely.
\item For $j = 1,2,\dotsm \chi(k)$ write (temporarily) $\TYPE_j = \TYPE(\varpi(N_k+j))$, and define
\begin{align*}
    \vec{S}_{N_k+j}^L &= \vec{S}_{N_k+j-1}^R & &\textup{and}&\vec{S}_{N_k+j}^R&= \vec{S}_{N_k+j}^L + w_{\varpi(N_k+j)} \vec{R}_{\TYPE_j}.
\end{align*}
\end{enumerate}
\item [(\namedlabel{X3}{X3})] \textbf{Update Sets and Stacks}: Set
   $ M_k := N_k + \chi(k),$
and 
\begin{align*}
\text{on}\  \{M_k>k\} \ \text{define} &\\
\mathcal{A}_k &:= \Big(\varpi(k+1),\varpi(k+2),\dotsm, \varpi(N_k + \chi(k))\Big), \text{ while }
\\
\text{ on}\  \{M_k=k \} \ \text{define } &\mathcal{A}_k := \emptyset ,&
\\
    \mathcal{U}_k&:= \mathcal{U}_{k-1}^*\setminus \mathcal{B}_k ,  
    \\
    \mathcal{D}_k&:= \mathcal{D}_{k-1}\cup\{\varpi(k)\}.
\end{align*}
Increment $k$ by 1 and go to step \eqref{X0}.

\end{enumerate}
\end{exploration}

We now describe another exploration algorithm, which is directly linked to the DCSBM graph. We use similar notation on purpose, except now every character will have an additional ``widetilde'' mark in the superscript. For example, we write $\widetilde{\mathcal{A}}_k$ to denote the analogue of $\mathcal{A}_k$, and we define $\widetilde{\zeta_0}=0$.
Moreover, all the initial values of stacks $\widetilde{\mathcal{U}}_0, \widetilde{\mathcal{A}}_0, \widetilde{\mathcal{D}}_0$ are equal to the values of their corresponding analogues in Exploration \ref{explore:field}.

We will use several times in the sequel the following notation. Let $\mathcal{F}$ be a $\sigma$-field, and let $W\ge 0$ be a non-negative $\mathcal{F}$-measurable  random variable. Then
\begin{equation*}
    X|\mathcal{F}\sim \operatorname{Exp}(W),
\end{equation*}
means that the conditional law of $X$ given $\mathcal{F}$ is exponential with rate $W$. 
\begin{exploration}[Graph Exploration]\label{explore:Graph}
    Let $k=1$. 
\begin{enumerate}
\item[(\namedlabel{G0}{G0})] \textbf{While} either  $\widetilde{\mathcal{A}}_{k-1}\neq ()$ 
 or $\widetilde{\mathcal{U}}^{\vec{\rho}}_{k-1} \neq \emptyset$ do as follows:
    \item  [(\namedlabel{G1}{G1})] \textbf{Orientation:}\begin{enumerate}
        \item 
         On $\{\widetilde{\mathcal{A}}_{k-1} = ()\} \cap \{\widetilde{\mathcal{U}}^{\vec{\rho}}_{k-1} \neq \emptyset\}$ our exploration recorded all the vertices of the initial $\widetilde{\zeta}_{k-1}$ components of $\mathcal{G}$ which intersect $[m]_{\vec{\rho}}$, and the unexplored part of $\G$ non-trivially intersects
$[m]_{\vec{\rho}}$.      
The algorithm increments $\widetilde\zeta_{k} = \widetilde\zeta_{k-1}+1$, and samples $\widetilde\varpi(k) = (\widetilde{l}_k,\widetilde{i}_k)$ from $\G\setminus \mathcal{D}_{k-1}$ according to
    \begin{align*}
       \p&\Big( \widetilde\varpi(k)  = (l,i)\big| \mathcal{H}_k \Big) \propto \rho_i Q_{i,i}w_{l}^i 1_{[(l,i)\in\widetilde{\mathcal{U}}_{k-1}]} , \\
\widetilde{Y}_{\widetilde{\zeta}_k} |&\mathcal{H}_k \sim \operatorname{Exp}\left(\sum_{(l,i)\in \mathcal{G}\setminus\mathcal{D}_{k-1}} \rho_iQ_{i,i}w_l^i\right),
    \end{align*}
    where $\mathcal{H}_k$ is the $\sigma$-field generated by the first $k-1$ steps of the algorithm, and where $\widetilde{Y}_{\widetilde{\zeta}_k}$ and $\widetilde{\varpi}(k)$ are conditionally independent given $\mathcal{H}_k$.
     Set $\widetilde{\mathcal{A}}_{k-1}^* = (\widetilde\varpi(k))$ and $\widetilde{\mathcal{U}}^*_{k-1} = \widetilde{\mathcal{U}}_{k-1} \setminus\{\widetilde\varpi(k)\}$. Call $\widetilde\varpi(k)$ the \textit{root} of the $
\displaystyle\widetilde{\zeta}_k$th component of $\mathcal{G}$, and define $\widetilde N_k = k$.
     \item Otherwise 
     on $\{\widetilde{\mathcal{A}}_{k-1} \neq ()\}$
      we have (by induction) \begin{equation*}\widetilde{\mathcal{A}}_{k-1} = \Big(\widetilde\varpi(k),\dotsm, \widetilde\varpi(\widetilde{N}_k)\Big),
     \end{equation*} for some $\widetilde{N}_k\geq k$. Set $\widetilde{\mathcal{A}}^*_{k-1} := \widetilde{\mathcal{A}}_{k-1}$, $\widetilde{\mathcal{U}}_{k-1}^* = \widetilde{\mathcal{U}}_{k-1}$, and $\widetilde\zeta_k = \widetilde\zeta_{k-1}$. 
    \end{enumerate}
Note that $\widetilde{N}_k-k+1$ equals the length of the stack $\widetilde{\mathcal{A}}^*_{k-1}$ almost surely. 

\item [(\namedlabel{G2}{G2})]\textbf{The (unexplored) neighbors of $\widetilde\varpi(k)$}: \begin{enumerate}
    \item Let $\widetilde{\mathcal{B}}_k$ be the neighbors of $\widetilde\varpi(k)$ contained in $\widetilde{\mathcal{U}}^*_{k-1}$. We set $\widetilde\chi(k) = \#\widetilde{\mathcal{B}}_k$ and call it  the number of children of $\widetilde\varpi(k)$.
    \item 
    On $\{\widetilde{\chi}(k)=0\}$ the algorithm jumps to \eqref{G3}.
\item
 Otherwise on $\{\widetilde{\chi}(k)>0\}$  the elements of $\widetilde{\mathcal{B}}_k$
are ordered as 
\begin{equation*}
\widetilde\varpi(\widetilde{N}_k+1),\dotsm, \widetilde\varpi(\widetilde{N}_k+\widetilde\chi(k)),
\end{equation*} first
non-decreasingly with respect to their type, and 
for each $i\in [m]$
using conditionally and mutually independent (of all the information which our exploration collected up to this point) auxiliary size-biasing (with respect to weight) of elements of type $i$.  

\end{enumerate}
\item [(\namedlabel{G3}{G3})] \textbf{Update Sets and Stacks}: 
 Set
   $ \widetilde{M}_k := \widetilde{N}_k + \widetilde\chi(k),$ and 
\begin{align*}
\text{on}\  \{ \widetilde{M}_k>k\} \ \text{define} & \\
    \widetilde{\mathcal{A}}_k &:= \Big(\widetilde\varpi(k+1), \widetilde\varpi(k+2),\dotsm, \widetilde\varpi(\widetilde{N}_k + \widetilde\chi(k))\Big), \text{ while}
\\
 \text{ on}\  \{\widetilde{M}_k=k \} \ \text{define } & \widetilde{\mathcal{A}}_k := \emptyset,&\\
    \widetilde{\mathcal{U}}_k&:= \widetilde{\mathcal{U}}_{k-1}^*\setminus \widetilde{\mathcal{B}}_k   \\
    \widetilde{\mathcal{D}}_k&:= \widetilde{\mathcal{D}}_{k-1}\cup\{\widetilde\varpi(k)\}.
\end{align*}
Increment $k$ by 1 and go to step \eqref{G0}.
\end{enumerate}
\end{exploration}

Define
$\zeta_\infty := \sup_k \zeta_k$ and 
$\widetilde{\zeta}_\infty := \sup_k \widetilde{\zeta}_k$.
Note that both 
$\zeta_\infty$ and $\widetilde{\zeta}_\infty$ are finite random variables, and more importantly that the numbers of steps in the above while loops, respectively
$$
K:=\inf\{k: \zeta_k= \zeta_\infty\} \ \text{ and } 
\widetilde{K}:=\inf\{k: \widetilde{\zeta}_k= \widetilde{\zeta}_\infty\} ,
$$
are finite almost surely.
Our key proposition is stated next.
\begin{proposition}\label{prop:Exprole}
Explorations \ref{explore:field} and \ref{explore:Graph} are equal in law.
More precisely,
\begin{equation}
 (\mathcal{A}_k , \mathcal{A}^*_k , \mathcal{U}_k,  \mathcal{U}^*_k,  \zeta_k, Y_{\zeta_k}, \mathcal{B}_k, N_k: k\leq K)  \overset{d}{=} 
  (\widetilde{\mathcal{A}}_k , \widetilde{\mathcal{A}}^*_k , \widetilde{\mathcal{U}}_k,  \widetilde{\mathcal{U}}^*_k,  \widetilde{\zeta}_k, \widetilde{Y}_{\widetilde{\zeta}_k}, \widetilde{\mathcal{B}}_k, \widetilde{N}_k: k\leq \widetilde{K}),
\end{equation}
and $(\varpi(k): k\leq K)\overset{d}{=}(\widetilde\varpi(k): k\leq \widetilde{K})$ in particular.
\end{proposition}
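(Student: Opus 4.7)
The plan is to prove the equality in law by induction on the step index $k$, using the memoryless property of the exponential variables $(\xi_l^i)$ underlying the field exploration. The base case is immediate because both explorations start from identical deterministic initial data ($\mathcal{A}_0 = \widetilde{\mathcal{A}}_0 = ()$, $\mathcal{U}_0 = \widetilde{\mathcal{U}}_0$, and $\zeta_0 = \widetilde{\zeta}_0 = 0$). For the inductive step, I condition on the past $\sigma$-algebra generated by the first $k-1$ steps (which, by hypothesis, have matching joint law across the two algorithms) and show that the one-step transitions coincide. The crucial observation is that by memorylessness, for each $(l, i) \in \mathcal{U}_{k-1}$, the residual $\eta_l^i := \xi_l^i/Q_{i,i} - S_{k-1;i}^R$ is conditionally distributed as $\operatorname{Exp}(w_l^i Q_{i,i})$, with mutual independence across $(l, i)$.

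In Case \eqref{X1}(a), the field exploration picks $\varpi(k)$ as the argmin over $(l, i) \in \mathcal{U}_{k-1}^{\vec{\rho}}$ of $\eta_l^i / \rho_i$, which is the argmin of independent $\operatorname{Exp}(\rho_i Q_{i,i} w_l^i)$ variables. Standard exponential identities then give both $\p(\varpi(k) = (l, i) \mid \text{past}) \propto \rho_i Q_{i,i} w_l^i$ and $Y_{\zeta_k} \sim \operatorname{Exp}\bigl(\sum_{(l,i) \in \mathcal{U}_{k-1}^{\vec{\rho}}} \rho_i Q_{i,i} w_l^i\bigr)$, with $\varpi(k)$ conditionally independent of $Y_{\zeta_k}$, which matches \eqref{G1}(a) exactly. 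Case \eqref{X1}(b) is trivial because $\varpi(k)$ is just the head of the existing active stack. For the neighbor-discovery step \eqref{X2}, applying memorylessness once more (now conditional on the just-sampled $\varpi(k)$ and $Y_{\zeta_k}$), for each remaining unexplored $(l, i)$ the event $\xi_l^i/Q_{i,i} \in [S_{k;i}^L, S_{k;i}^R)$ occurs with probability
\[
1 - \exp\bigl(-w_l^i Q_{i,i}(S_{k;i}^R - S_{k;i}^L)\bigr) = 1 - \exp\bigl(-w_l^i w_{\varpi(k)} Q_{i, i_k}\bigr),
\]
using $S_{k;i}^R - S_{k;i}^L = w_{\varpi(k)} R_{i, i_k}$ and $R_{i, i_k} = Q_{i, i_k}/Q_{i, i}$; moreover these events are mutually independent across $(l, i)$. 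Since this is precisely the DCSBM edge probability between $(l, i)$ and $\varpi(k) = (l_k, i_k)$, the unordered set $\mathcal{B}_k$ agrees in law with $\widetilde{\mathcal{B}}_k$.

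It remains to match the \emph{ordering} within each type inside $\mathcal{B}_k$: the field exploration lists type-$i$ children by increasing $\xi_l^i$, whereas the graph exploration uses the independent auxiliary size-biased-by-weight randomization prescribed in \eqref{G2}(c). I would verify this match via the classical identity that for independent $\operatorname{Exp}(r_l)$ variables, their order of arrival yields a size-biased-by-weight ordering of the index set, once the correct conditional setup (isolating the conditional law of the order from the conditional law of the inclusions) is carried out. After this match is established, the updates in \eqref{X3} and \eqref{G3} for $\mathcal{A}_k, \mathcal{A}_k^*, \mathcal{U}_k, \mathcal{U}_k^*, \mathcal{D}_k, N_k, M_k$ are identical deterministic functions of the already-matched objects, and both algorithms share the same stopping criterion, which closes the induction and in particular yields $K \overset{d}{=} \widetilde{K}$. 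The main obstacle will be this ordering match: it is the only step going beyond elementary memoryless/min-of-exponentials identities, and the otherwise routine argument requires careful bookkeeping of the rather elaborate notation for stacks, active sets and types across both explorations.
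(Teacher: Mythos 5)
Your overall strategy is the same as the paper's: an inductive, step-by-step coupling in which one conditions on the information gathered so far, uses memorylessness to see that the residual clocks of the still-unexplored vertices are again independent exponentials, matches the root/$Y_{\zeta_k}$ step with an exponential race (this is exactly Lemma \ref{lem:Ystopping0}), matches the neighbour-inclusion probabilities with the DCSBM edge probabilities via $S_{k;i}^R-S_{k;i}^L=w_{\varpi(k)}R_{i,i_k}$ and $R_{i,i_k}Q_{i,i}=Q_{i,i_k}$ (Lemma \ref{lem:neighbors}), and observes that \eqref{X3}/\eqref{G3} are deterministic functions of already-matched data. The only structural difference is that the paper makes ``conditioning on the past'' precise through the multidimensional stopping times $\vec{S}_k^L,\vec{S}_k^R$ (Lemmas \ref{lem:SisStoppingInductionStep} and \ref{lem:SareStopping}), the strong Markov property of the field at such times (Lemma \ref{L:strongManalogue}), and the identification $\mathcal{E}_k=\mathscr{F}(\vec{S}^R_{k\wedge K})$; your appeal to memorylessness given the first $k-1$ steps is the same idea, but it needs exactly this stopping-time justification, since the conditioning $\sigma$-algebra is generated by the clocks that have already rung.

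The genuine gap is precisely the step you single out, the within-type ordering in \eqref{X2}(c) versus \eqref{G2}(c), and the tool you propose there does not deliver it. The classical identity (the arrival order of independent exponentials is a size-biased-by-rate permutation) is an \emph{unconditional} statement; in \eqref{X2}(c) the children are ordered by clocks \emph{conditioned to ring inside the finite window} $[S^L_{k;i},S^R_{k;i})$, and exponentials conditioned to land in a bounded interval do not arrive in size-biased order. Concretely, for two clocks with rates $r_a,r_b$ conditioned to ring within a window of length $c$, one has $\p(a\ \text{rings first})=\bigl[(1-e^{-r_b c})-\tfrac{r_b}{r_a+r_b}(1-e^{-(r_a+r_b)c})\bigr]\big/\bigl[(1-e^{-r_a c})(1-e^{-r_b c})\bigr]$, which tends to $1/2$ as $c\downarrow 0$ rather than to $r_a/(r_a+r_b)$; so ``isolating the conditional law of the order from the conditional law of the inclusions'' destroys the size-biasing instead of preserving it. Note that the paper's own proof does not invoke your identity at this point: it matches the outputs of \eqref{X2}(c) and \eqref{G2}(c) by a direct coupling after renewed applications of Lemmas \ref{L:strongManalogue} and \ref{lem:SareStopping}, and this is in fact the tersest passage of the published argument. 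To close your proof you would have to replace the unconditional size-biased-arrival identity by a correct argument for the ordering match --- for instance by computing the conditional law of the order given the child sets and showing directly that it can be coupled with the auxiliary randomization of \eqref{G2}(c), or by arguing that, conditionally on the child sets, the law of the remainder of the exploration is insensitive to the within-type order --- rather than treating it as a routine application of the exponential race lemma.
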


We delay the proof until Section \ref{sec:PropExploreProof}. 

\subsection{Preliminary Lemmas}

Denote by $\le$ (typically we write $\vec{s}\le \vec{t}$) the standard coordinate-wise comparison partial order on  $(-\infty,\infty]^m$.
In complete analogy with the one-dimensional time setting,  we can now define filtrations (and related notions) indexed by $\R_+^m \subset (-\infty,\infty]^m$.The reader is referred to  \cite[Section 2.8]{Ethier:1986} for concepts and results which we will typically use in the sequel without further mention. In particular, the filtration  $\mathscr{F} = (\mathscr{F}(\vec{t});\, \vec{t}\in \R_+^m)$ of our field $\bbX$ is defined by the standard completion of 
$\sigma\left\{ \bbX(\vec{s}): \vec{s}\le \vec{t}\right\}.$ It is clear that $\bbX$ is adapted to $\mathscr{F}$. Furthermore it is easy to see that  $\bbX$ has right-continuous paths with respect to (the above $m$-dimensional partial order in time and) the Euclidean topology on the state space $\R^m$. 
We conclude that $\bbX$ is $\mathscr{F}$-progressively measurable.

We will need the following analogue of the $m$-dimensional strong Markov property.
\begin{lemma}
\label{L:strongManalogue}
Let $\vec{S}$ be a $\mathscr{F}$-stopping time such that $\mathbb{P}(\vec{S}<\infty)=1$. Then 
\begin{equation}\label{eqn:memorylessField}
    \left.\left(\bbX(\vec{t}+\vec{S}) - \bbX(\vec{S}) ;\, \vec{t}\in \R_+^m\right) \,\right| \mathscr{F}(\vec{S}) \overset{d}{=} \left( \tilde{\bbX} (\vec{t});\, \vec{t}\in \R_+^m\right),
\end{equation}
where ${\tilde{\bbX}}$ is the random field distributed as $\bbX^{{\tilde{\bW}}, Q}$ with a random (and $\mathscr{F}(\vec{S})$-measurable) collection of weights $\tilde{\bW} = (\tilde{\bw}^1,\dotsm, \tilde{\bw}^m)$, such that 
$\tilde{\bw}^j$ is the unique vector in $\ell^2_\downarrow$ of finite length whose entries are the non-decreasingly ordered elements of $\{w_l^j: \frac{1}{Q_{j,j}}\xi_l^j> S_j\}$. 
\end{lemma}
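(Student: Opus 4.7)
The plan is to leverage the product structure of the field together with the memoryless property of exponentials, and then to pass from deterministic to general multiparameter stopping times via dyadic approximation.

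To set up, write $\bbX(\vec t)=\sum_{j=1}^{m}\bbX^{(j)}(t_j)$, where $\bbX^{(j)}:=(X_{1,j},\dotsc,X_{m,j})$ depends only on the type-$j$ clock family $(\xi_l^j)_l$; the paper already observed that $\bbX^{(1)},\dotsc,\bbX^{(m)}$ are mutually independent, and correspondingly $\mathscr F(\vec t)=\bigvee_{j}\mathscr F_j(t_j)$ where $\mathscr F_j$ is the filtration generated by $\bbX^{(j)}$. For each $\vec t_0\in\R_+^m$, let $\mathcal A_j(\vec t_0):=\{l:\xi_l^j/Q_{j,j}>t_{0,j}\}$ and let $\tilde\bw^j(\vec t_0)$ be the decreasing rearrangement of the finite set $\{w_l^j:l\in\mathcal A_j(\vec t_0)\}$.

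For the deterministic case $\vec S=\vec t_0$, note that both $\mathcal A_j(\vec t_0)$ and $(\xi_l^j:l\notin\mathcal A_j(\vec t_0))$ are $\mathscr F_j(t_{0,j})$-measurable. The scalar memoryless property applied to each individual $\xi_l^j$ yields that, conditionally on $\mathscr F(\vec t_0)$, the residual clocks $\{\xi_l^j-t_{0,j}Q_{j,j}:l\in\mathcal A_j(\vec t_0),\,j\in[m]\}$ are mutually independent with $\xi_l^j-t_{0,j}Q_{j,j}\sim\operatorname{Exp}(w_l^j)$. Substituting into \eqref{eqn:XfieldDef1} identifies $\bbX(\cdot+\vec t_0)-\bbX(\vec t_0)$, conditionally on $\mathscr F(\vec t_0)$, as a fresh copy of $\bbX^{\tilde\bW(\vec t_0),Q}$, which is \eqref{eqn:memorylessField} in this case.

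For general $\vec S$, I would use the dyadic approximation $S_{n,j}:=2^{-n}\lceil 2^nS_j\rceil$ (with the convention $\infty\mapsto\infty$). Each $\vec S_n$ is then a countably-valued $\mathscr F$-stopping time, $\vec S_n\downarrow\vec S$ componentwise, and $\mathscr F(\vec S)\subset\mathscr F(\vec S_n)$. For $A\in\mathscr F(\vec S)$ and any $H$ that is a finite-dimensional cylinder functional with bounded continuous marginals, partitioning on $\{\vec S_n=\vec s\}\in\mathscr F(\vec s)$ and applying the deterministic case yields
\begin{equation*}
\E\bigl[H(\bbX(\cdot+\vec S_n)-\bbX(\vec S_n))\,\mathbf 1_A\bigr]=\E\bigl[F(\tilde\bW(\vec S_n))\,\mathbf 1_A\bigr],\qquad F(\bU):=\E[H(\bbX^{\bU,Q})].
\end{equation*}
Taking $n\to\infty$: the right-continuity of $\bbX$ in the componentwise partial order (a finite sum of one-dimensional c\`adl\`ag processes) delivers pointwise convergence of the LHS integrand, while on the RHS the chain $\mathcal A_j(\vec S_n)\uparrow\mathcal A_j(\vec S)$ together with the finiteness of the vertex count forces $\tilde\bW(\vec S_n)=\tilde\bW(\vec S)$ for all sufficiently large $n$ almost surely. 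Bounded convergence then gives the identity with $\vec S_n$ replaced by $\vec S$, and a standard monotone class argument extends from cylinder $H$ to all bounded measurable $H$.

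The main delicate point is that the marginals $S_j$ of a multiparameter $\mathscr F$-stopping time are in general not stopping times for the column filtrations $\mathscr F_j$, which rules out a direct coordinate-wise reduction to the scalar strong Markov property. The partition-on-$\{\vec S_n=\vec s\}$ trick circumvents this obstruction, since on each such atom the deterministic case applies directly; the finiteness of the vertex set then eliminates any genuine limiting complication on the weight side.
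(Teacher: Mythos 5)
Your proposal is correct and takes essentially the same route as the paper's proof: the memoryless property of the exponential clocks gives the result at fixed times (hence, by partitioning on atoms, at discrete stopping times), and the general $\vec{S}$ is handled by approximating from above with dyadic-valued stopping times. The paper compresses the final limiting step into ``done in a standard way,'' whereas you spell it out (right-continuity of the field along decreasing times, eventual stabilization of the residual weight vector due to the finite vertex set, and a monotone class argument), but the underlying argument is the same.
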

\begin{proof}
The assumption is that $\vec{S}$ is a non-negative random vector such that $\{\vec{S}\le \vec{t}\}\in \mathscr{F}(\vec{t})$ for all $\vec{t}$.
The $\sigma$-field $\mathscr{F}(\vec{S})$ consists of all measurable $A$ such that $A \cap \{\vec{S} \leq \vec{s}\} \in \mathscr{F}(\vec{s})$ for all $\vec{s} \geq \vec{0}$.

As in the $1$-dimensional setting, we have $(\vec{S},\bbX(\vec{S})) \in \mathscr{F}(\vec{S})$.
We know that the field of increments $\bbX(\cdot + \vec{S})-\bbX(\vec{S})$ depends deterministically on the residual exponential random variables $(\xi_l^j-Q_{j,j}S_j)_{j\in [m], l\ge 1, w_l^j>0, \xi_l^j/Q_{j,j}>S_j}$ in the same way that the original field $\bbX$ depends on $(\xi_l^j)_{j\in [m], l\ge 1, w_l^j>0}$.
We need to check that, given $\mathscr{F}(\vec{S})$, $(\xi_l^j-Q_{j,j}S_j)_{j\in [m], l\ge 1, w_l^j>0, \xi_l^j/Q_{j,j}>S_j}$ are again independent exponentials, where $\xi_l^j-Q_{j,j}S_j$ has exponential (rate $w_l^j$) distribution.
For this we first note that $\{\xi_l^j/Q_{j,j}>S_j\}\in \mathscr{F}(\vec{S})$, for each
 $l\geq 1$ and $j\in [m]$ such that $w_l^j>0$,
and moreover that $\mathscr{F}(\vec{S})$ is in fact generated by $\vec{S}$ and the family of   
exponentials
$(\xi_l^k/Q_{k,k})_{k\in [m], l\ge 1, w_l^k>0, \xi_l^k/Q_{k,k}\leq S_j}$
which occur prior to $\vec{S}$.
Therefore, on $\{ \xi_l^j/Q_{j,j}>S_j\}$, we have
\begin{equation}
\label{E:simplifyingFF(S)}
\mathbb{P}\left( \left.\frac{\xi_l^j}{Q_{j,j}}-S_j >u \right|  \mathscr{F}(\vec{S})\right) = 
\mathbb{P}\left( \left.\frac{\xi_l^j}{Q_{j,j}}-S_j >u \right| \vec{S}, \left(\frac{\xi_l^k}{Q_{k,k}}\right)_{k\in [m], l\ge 1, w_l^k>0, \xi_l^k/Q_{k,k}\leq S_j} \right).
\end{equation}
Due to the independence of the original family 
$(\xi_l^j)_{j\in [m], l\ge 1, w_l^j>0}$
of exponentials, it is now particularly easy to check that the RHS in \eqref{E:simplifyingFF(S)} equals $e^{-uQ_{j,j}w_l^j}$ almost surely on $\{ \xi_l^j/Q_{j,j}>S_j\}$, provided that the stopping time $\vec{S}$ is a discrete random vector. 

The previous paragraph can be generalized in an obvious way in order to prove conditional independence (given $ \mathscr{F}(\vec{S})$) of residual quantities $\frac{\xi_l^j}{Q_{j,j}}-S_j$ over all relevant $l$ and $j$.
Together with the above discussion, this confirms \eqref{eqn:memorylessField} in the discrete $\vec{S}$ setting.
Deriving 
\begin{equation}
\label{E:residual_time_dist}
\mathbb{P}\left( \left.\frac{\xi_l^j}{Q_{j,j}}-S_j >u \right|  \mathscr{F}(\vec{S})\right)
=e^{-uQ_{j,j}w_l^j} \mbox{ a.s.}
\end{equation}
and its joint  distribution counterparts, or equivalently, deriving \eqref{eqn:memorylessField}  for any stopping time $\vec{S}$ is done in a standard way (by approximating $\vec{S}$ from above with a sequence of discrete stopping times). 
\end{proof}

The following corollary is immediate and will also be used frequently in the sequel. 
\begin{corollary}\label{cor:Continuity1}
    Let $\vec{S}$ be an $\mathscr{F}$-stopping time such that $\p(\vec{S}<\infty) = 1$. Then for any $\vec{t}\in (0,\infty)^m$, and all $i,j\in [m]$
    \begin{equation*}
     \p(X_{i,j}(t_j+S_j) = X_{i,j}((t_j+S_j)-) ) = 1.
    \end{equation*}
    More generally, suppose that $\vec{U}\in \R_+^m$ is an $\mathscr{F}(\vec{S})$-measurable random variable such that
    $\bigcup_j \left\{X_{j,j}(S_{j})\neq X_{j,j}(S_j-),U_j=0\right\}$ is an event of probability zero. Then, for all $i,j\in [m]$
    \begin{equation*}
        \p(X_{i,j}(U_j+S_j) = X_{i,j}((U_j+S_j)-) ) = 1.
    \end{equation*}
\end{corollary}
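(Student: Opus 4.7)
The plan is to combine Lemma \ref{L:strongManalogue} with an elementary observation extracted directly from \eqref{eqn:XfieldDef1}: for each fixed $j\in[m]$ the whole column $(X_{i,j})_{i\in[m]}$ jumps precisely on the common set $\Jcal_j:=\{\xi_l^j/Q_{j,j}:\,l\le \len(\bw^j)\}$, so $X_{i,j}$ is continuous at a time $\tau$ if and only if $\tau\notin\Jcal_j$, if and only if $X_{j,j}$ is continuous at $\tau$. It therefore suffices to show that the random time appearing in each assertion almost surely avoids $\Jcal_j$.

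For the first assertion I would condition on $\mathscr{F}(\vec S)$ and split $\Jcal_j$ into the already realized points $\xi_l^j/Q_{j,j}\le S_j$ (which cannot equal $t_j+S_j$ because $t_j>0$) and the future points $\xi_l^j/Q_{j,j}>S_j$. By \eqref{E:residual_time_dist}, each such residual $\xi_l^j/Q_{j,j}-S_j$ is, conditionally on $\mathscr{F}(\vec S)$, exponential with rate $Q_{j,j}w_l^j$, and hence assigns zero mass to the deterministic target $t_j$. A union bound over the (finitely many) indices $l$, followed by integration over $\mathscr{F}(\vec S)$, yields the claim.

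For the more general statement I would separate according to whether $U_j=0$ or $U_j>0$, both of which are events in $\mathscr{F}(\vec S)$ since $U_j$ is $\mathscr{F}(\vec S)$-measurable. On $\{U_j>0\}$ the previous argument applies verbatim with $t_j$ replaced by the $\mathscr{F}(\vec S)$-measurable strictly positive quantity $U_j$: given $\mathscr{F}(\vec S)$, $U_j$ is a known positive real, and the conditionally independent residual exponentials miss it with probability one. On $\{U_j=0\}$ the hypothesis directly provides $X_{j,j}(S_j)=X_{j,j}(S_j-)$ almost surely, i.e.\ $S_j\notin\Jcal_j$, and the common-jumps observation then transfers continuity from $X_{j,j}$ to $X_{i,j}$ for every $i$.

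I do not expect any substantial obstacle here; this is really a one-line consequence of the atomlessness of residual exponentials supplied by Lemma \ref{L:strongManalogue}. The only minor bookkeeping concern is to justify the conditional distribution of $\xi_l^j/Q_{j,j}-S_j$ when $\vec S$ is a general (not necessarily discrete) multiparameter stopping time, but this was already handled en route to \eqref{E:residual_time_dist} by discretization from above, and the same reduction applies without change.
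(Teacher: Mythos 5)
Your proof is correct and fills in exactly the argument the paper leaves implicit: the paper offers no written proof, calling the corollary ``immediate'' after Lemma \ref{L:strongManalogue}, and your route --- conditioning on $\mathscr{F}(\vec{S})$, using the atomless conditional law \eqref{E:residual_time_dist} of the residual exponentials together with the observation that for fixed $j$ all jumps of $X_{i,j}$ occur at jump times of $X_{j,j}$ --- is precisely that intended argument. The split into $\{U_j>0\}$ (where the $\mathscr{F}(\vec S)$-measurable $U_j$ plays the role of the deterministic $t_j$) and $\{U_j=0\}$ (where the stated hypothesis supplies continuity of $X_{j,j}$ at $S_j$) correctly handles the general case, so nothing is missing.
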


The following lemma is also easy.
\begin{lemma}\label{lem:SisStoppingInductionStep}
    Suppose that $\vec{S}$ is an almost surely finite $\mathscr{F}$-stopping time, $\vec{U}\in \mathscr{F}(\vec{S})$ is a random variable such that $\p(\vec{U}\ge \vec{S})=1$. 
    Let $(L,I)$ be an $\mathscr{F}(\vec{S})$-measurable random index. 
    Then $\vec{U}$ and $\vec{U}+w_{L}^{I} \vec{R}_{I} 1_{\{\xi_L^I/Q_{I,I} \leq  S_{I}\}}$ are also $\mathscr{F}$-stopping times. 
\end{lemma}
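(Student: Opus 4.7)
The plan is to reduce both assertions to a single auxiliary fact: any $\mathscr{F}(\vec{S})$-measurable random vector $\vec{W}$ satisfying $\vec{W}\ge \vec{S}$ almost surely is automatically an $\mathscr{F}$-stopping time. Granted this, the first claim is immediate by hypothesis, and the second follows once we check that $\vec{V}:=\vec{U}+w_{L}^{I}\vec{R}_{I}\,1_{\{\xi_L^I/Q_{I,I}\le S_I\}}$ is itself $\mathscr{F}(\vec{S})$-measurable and dominates $\vec{S}$ (the latter being obvious, since $\vec{R}_I$ has non-negative entries and $w_L^I\ge 0$).

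For the auxiliary fact, I would fix $\vec{t}\in\R_+^m$ and verify $\{\vec{W}\le\vec{t}\}\in\mathscr{F}(\vec{t})$. The domination $\vec{W}\ge\vec{S}$ yields $\{\vec{W}\le\vec{t}\}\subseteq\{\vec{S}\le\vec{t}\}$ (modulo a null set), so
\begin{equation*}
\{\vec{W}\le\vec{t}\} \;=\; \{\vec{W}\le\vec{t}\}\cap\{\vec{S}\le\vec{t}\}.
\end{equation*}
The right-hand side belongs to $\mathscr{F}(\vec{t})$ directly from the definition $\mathscr{F}(\vec{S})=\{A:A\cap\{\vec{S}\le\vec{s}\}\in\mathscr{F}(\vec{s})\text{ for every }\vec{s}\}$ applied with $A=\{\vec{W}\le\vec{t}\}\in\mathscr{F}(\vec{S})$.

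To finish the second assertion, the $\mathscr{F}(\vec{S})$-measurability of the deterministic vector $w_L^I\vec{R}_I$ follows from that of $(L,I)$ and from the fact that $\vec{R}_i$ depends only on $i$. The only delicate point is the indicator $1_{\{\xi_L^I/Q_{I,I}\le S_I\}}$. I would decompose on the $\mathscr{F}(\vec{S})$-partition $\{(L,I)=(l,i)\}$ and show, for each fixed pair $(l,i)$, that $\{\xi_l^i/Q_{i,i}\le S_i\}\in\mathscr{F}(\vec{S})$. On $\{\vec{S}\le \vec{t}\}\cap\{\xi_l^i/Q_{i,i}\le S_i\}$ we have $\xi_l^i/Q_{i,i}\le t_i$, so the jump time of $X_{i,i}$ corresponding to the $l$-th weight is recoverable from $\bbX$ restricted to $[\vec{0},\vec{t}]$; meanwhile, $S_i$ is $\mathscr{F}(\vec{S})$-measurable since $\vec{S}$ itself is. This gives the required measurability after intersecting with $\{\vec{S}\le\vec{t}\}$.

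The main obstacle I expect is precisely this last measurability check, since the multi-parameter filtration $\mathscr{F}$ does not sit inside the classical one-parameter framework where such facts are folklore. I would handle it, in analogy with the proof of Lemma~\ref{L:strongManalogue}, by first treating stopping times $\vec{S}$ taking countably many values -- where the analysis reduces on each atom $\{\vec{S}=\vec{s}\}$ to a classical coordinate-wise argument -- and then approximating a general $\vec{S}$ from above by a decreasing sequence of such discrete stopping times, passing the measurability through the limit.
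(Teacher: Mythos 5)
Your argument is correct, and it fills in exactly what the paper leaves as "easy": the paper gives no proof of Lemma \ref{lem:SisStoppingInductionStep}, and your reduction to the auxiliary fact that any $\mathscr{F}(\vec{S})$-measurable $\vec{W}$ with $\vec{W}\ge\vec{S}$ a.s.\ satisfies $\{\vec{W}\le\vec{t}\}=\{\vec{W}\le\vec{t}\}\cap\{\vec{S}\le\vec{t}\}\in\mathscr{F}(\vec{t})$ (up to null sets, absorbed by the completion) is the intended route and works verbatim with the componentwise partial order. For the indicator, note that the membership $\{\xi_l^i/Q_{i,i}\le S_i\}\in\mathscr{F}(\vec{S})$ is precisely the fact the paper already records in the proof of Lemma \ref{L:strongManalogue}, so you may simply cite it; your direct verification of it is fine modulo the same labelling convention the paper itself uses (jumps of $X_{i,i}$ of equal size $w_l^i$ are not distinguishable from the path alone). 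One caution: your fallback plan of approximating $\vec{S}$ from above by discrete stopping times only yields measurability with respect to $\bigcap_n\mathscr{F}(\vec{S}^{(n)})$, i.e.\ $\mathscr{F}(\vec{S}+)$, and the paper does not assert right-continuity of the completed natural filtration, so the direct argument (or the citation of Lemma \ref{L:strongManalogue}) should be kept as the main one.
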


The above lemma implies that in \eqref{X2}(c) all the random vectors $\vec{S}_{N_k+j}^L$, where $\ast\in\{L,R\}$, are stopping times provided that both $\vec{S}_{N_k}^L$ and $\vec{S}_{N_k}^R$ are stopping times. The proof of the next lemma is again a consequence of Lemma \ref{L:strongManalogue}. 
\begin{lemma}\label{lem:Ystopping0}
    Let $\vec{S}$ be an $\mathscr{F}$-stopping time, and let $\mathcal{U} = \{(l,i): \xi_l^i/Q_{i,i}>S_i\}$. Define 
    \begin{equation*}
        Z = \min_{(l,i)\in \mathcal{U}} \left\{\frac{1}{\rho_i}\left(\frac{\xi_l^i}{Q_{i,i}} - S_{i}\right)\right\}.
    \end{equation*}
    Then $\vec{S}+Z\vec{\rho}$ is an $\mathscr{F}$-stopping time, and furthermore
    \begin{equation*}
        Z\big| \mathscr{F}(\vec{S}) \sim \operatorname{Exp}\left(\sum_{(l,i)\in \mathcal{U}} \rho_i Q_{i,i} w_l^i \right),
    \end{equation*}
    and
    \begin{align*}
    \p&\left(\operatornamewithlimits{arg min}_{(l,i)\in \mathcal{U}} \left\{\frac{1}{\rho_i}\left(\frac{\xi_l^i}{Q_{i,i}} - S_i\right) \right\}  = (l_0,i_0)\big| \mathscr{F}(\vec{S})\right)1_{[(l_0,i_0)\in \mathcal{U}]}\propto \rho_{i_0} Q_{i_0,i_0} w_{l_0}^{i_0} 1_{[(l_0,i_0)\in \mathcal{U}]}.
    \end{align*} 
\end{lemma}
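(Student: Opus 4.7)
The plan is to combine the multi-dimensional strong-Markov Lemma \ref{L:strongManalogue} with the classical result on minima of independent exponential random variables. First I would note that the random index set $\mathcal{U}$ lies in $\mathscr{F}(\vec{S})$, since it is determined by $\vec{S}$ and by the indicators $\{\xi_l^i/Q_{i,i}>S_i\}$, which are themselves $\mathscr{F}(\vec{S})$-measurable.

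For the conditional laws of $Z$ and of the argmin, the argument will invoke the residual-time identity \eqref{E:residual_time_dist} established inside the proof of Lemma \ref{L:strongManalogue}: conditionally on $\mathscr{F}(\vec{S})$, the residuals $\eta_l^i := \xi_l^i/Q_{i,i} - S_i$, $(l,i)\in \mathcal{U}$, are independent with $\eta_l^i \sim \operatorname{Exp}(Q_{i,i}w_l^i)$. Rescaling yields $\eta_l^i/\rho_i \sim \operatorname{Exp}(\rho_i Q_{i,i}w_l^i)$ conditionally (terms with $\rho_i = 0$ are interpreted as $+\infty$ and do not contribute to $Z$), so $Z$ is the minimum of conditionally independent exponentials indexed by $(l,i)\in \mathcal{U}\cap(\mathbb{N}\times[m]_{\vec{\rho}})$. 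Both distributional assertions are then immediate from the textbook facts that the minimum of independent $\operatorname{Exp}(\lambda_k)$ variables is $\operatorname{Exp}(\sum_k \lambda_k)$ and that the probability of the argmin being $k_0$ is $\lambda_{k_0}/\sum_k\lambda_k$.

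For the stopping-time claim, I would fix $\vec{t}\in \R_+^m$ and verify that $\{\vec{S}+Z\vec{\rho}\le \vec{t}\}\in\mathscr{F}(\vec{t})$. On the event $\{\vec{S}\le \vec{t}\}$ one may define the $\mathscr{F}(\vec{t})$-measurable scalar $z^\star := \min_{i:\rho_i>0}(t_i-S_i)/\rho_i \in [0,\infty]$, and observe that, because the constraint $S_i+Z\rho_i\le t_i$ for $\rho_i=0$ reduces to $S_i\le t_i$,
\begin{equation*}
\{\vec{S}+Z\vec{\rho}\le \vec{t}\} \;=\; \{\vec{S}\le \vec{t}\}\cap \{Z\le z^\star\}.
\end{equation*}
By definition of $Z$, $\{Z\le z^\star\}\cap\{\vec{S}\le\vec{t}\}$ is the event that some $(l,i)\in \mathcal{U}$ with $\rho_i>0$ satisfies $\xi_l^i/Q_{i,i}\in(S_i,S_i+z^\star\rho_i]\subseteq(S_i,t_i]$; each such atomic condition is determined by $\vec{S}$ and by the jump times $\xi_l^i/Q_{i,i}\le t_i$, all of which are $\mathscr{F}(\vec{t})$-measurable.

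I do not anticipate any genuine obstacle. The most delicate bookkeeping issue is the possibly degenerate case $\rho_i = 0$ (which simply removes that coordinate from the scalar constraint defining $z^\star$) and the need to verify that every atomic event contributing to $\{Z\le z^\star\}$ can be read off from data observed by the multi-dimensional time $\vec{t}$; both are straightforward once the measurability of $\mathcal{U}$ and $z^\star$ are in place.
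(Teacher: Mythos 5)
Your proof is correct, and for the distributional claims it follows the paper's route: both rest on Lemma \ref{L:strongManalogue} (in your case, invoked through the residual-time identity \eqref{E:residual_time_dist} and its joint version) followed by the textbook min/argmin facts for independent exponentials, with the $\rho_i=0$ coordinates harmlessly dropping out exactly as you note.

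Where you genuinely diverge is the stopping-time claim. The paper first checks the base case $\vec{S}=\vec{0}$ (that $Y_1\vec{\rho}$ from \eqref{X1}(a) is a stopping time), then applies Lemma \ref{L:strongManalogue} to conclude that $Z\vec{\rho}$ is a stopping time for the natural filtration of the shifted field $\bbX(\cdot+\vec{S})-\bbX(\vec{S})$, and leaves the passage to ``$\vec{S}+Z\vec{\rho}$ is an $\mathscr{F}$-stopping time'' as ``a little thought''. You instead verify $\{\vec{S}+Z\vec{\rho}\le\vec{t}\}\in\mathscr{F}(\vec{t})$ directly via the scalar $z^\star=\min_{i:\rho_i>0}(t_i-S_i)/\rho_i$; this makes explicit precisely the step the paper elides, and it correctly handles the fact that the threshold is the \emph{minimum} over coordinates (which matters when $\vec{t}$ is not proportional to $\vec{\rho}$). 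Two small points to tighten: (i) you use that $\vec{S}\,1_{\{\vec{S}\le\vec{t}\}}$, hence $z^\star$, is $\mathscr{F}(\vec{t})$-measurable --- this follows from $\{\vec{S}\le\vec{t}\}\cap\{S_i\le a\}=\{\vec{S}\le(t_1,\dots,a,\dots,t_m)\}\in\mathscr{F}(\vec{t})$ for $a\le t_i$, and deserves a line; (ii) the individual ``atomic'' events $\{\xi_l^i/Q_{i,i}\in(S_i,S_i+z^\star\rho_i]\}$ for a fixed $l$ are not literally recoverable from the field when weights are tied, but the event you actually need is the union, i.e.\ that $X_{i,i}$ has \emph{some} jump in the window $(S_i,S_i+z^\star\rho_i]\subseteq[0,t_i]$ for some $i$ with $\rho_i>0$, and that is manifestly $\mathscr{F}(\vec{t})$-measurable, so the argument stands.
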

\begin{proof}
    As already noted, we will apply Lemma \ref{L:strongManalogue}.
    More precisely, it suffices to prove the statements on the conditional law of $Z$ and $\operatornamewithlimits{arg min}_{(l,i)\in \mathcal{U}} \left\{\frac{1}{\rho_i}\left(\frac{\xi_l^i}{Q_{i,i}} - S_i\right)\right\}$ in the  setting where $\p(\vec{S} = \vec{0})=1$.
    Note that these are a clear consequence
    of the elementary properties of (conditionally) independent exponential random variables. 
    
    Furthermore, $Y_1 \vec{\rho}$ (where $Y_1$ is defined in \eqref{X1}(a)) is an $\mathscr{F}$-stopping time since \begin{align*}
        \{Y_1\vec{\rho} \le \vec{t}\} &= \left\{\exists (l,i): \frac{\xi_l^i}{\rho_i Q_{i,i}} \rho_i \le t_i  \right\} =  \left\{\exists (l,i): \frac{\xi_l^i}{Q_{i,i}} \le t_i  \right\} \\  &=  \left\{\exists i:X_{i,i}\textup{ has a jump in }[0,t_i] \right\}, 
    \end{align*} 
    and the event on the RHS is clearly $\mathscr{F}(\vec{t})$-measurable. 
    Applying Lemma \ref{L:strongManalogue}, we arrive to the conclusion that $Z\vec{\rho}$ is a stopping-time with respect to the natural filtration of $\bbX(\cdot + \vec{S}) - \bbX(\vec{S})$, and a little thought is needed to see that this is enough to conclude that $\vec{S} + Z\vec{\rho}$ is a stopping time with respect to $(\mathscr{F}(\vec{t}), \vec{t} \geq \vec{0})$.
\end{proof}

By induction, Lemmas \ref{lem:SisStoppingInductionStep} and \ref{lem:Ystopping0} imply the following:
\begin{lemma}\label{lem:SareStopping}
Recall Exploration \ref{explore:field}.
    The vectors $\vec{S}_k^L$ and $\vec{S}_k^R$ constructed 
    in \eqref{X1}(a)
    are $\mathscr{F}$-stopping times, for any relevant $k$. 
\end{lemma}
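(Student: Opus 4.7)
The plan is to proceed by induction on $k$, with base case $\vec{S}_0^R = \vec{0}$ being immediate since constants are trivially $\mathscr{F}$-stopping times. For the inductive step, assuming all $\vec{S}_{k'}^{\ast}$ with $k' < k$ and $\ast\in\{L,R\}$ are stopping times, I would analyze step $k$ in the two scenarios of \eqref{X1} separately, and then separately treat the children-processing in \eqref{X2}(c).

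For scenario \eqref{X1}(a) (starting a new component), I would apply Lemma \ref{lem:Ystopping0} with $\vec{S} = \vec{S}_{k-1}^R$. By the structure of the exploration, the set $\mathcal{U}_{k-1}$ coincides with $\{(l,i) : \xi_l^i/Q_{i,i} > S_{k-1;i}^R\}$, so the lemma yields both that $\vec{S}_k^L = \vec{S}_{k-1}^R + Y_{\zeta_k}\vec{\rho}$ is a stopping time and that $\varpi(k) = (l_k,i_k)$ is $\mathscr{F}(\vec{S}_k^L)$-measurable. Since the argmin construction forces $\xi_{l_k}^{i_k}/Q_{i_k,i_k} = S_{k;i_k}^L$ almost surely, the indicator in Lemma \ref{lem:SisStoppingInductionStep} (applied with $\vec{S} = \vec{U} = \vec{S}_k^L$ and $(L,I) = \varpi(k)$) equals $1$, and that lemma then delivers $\vec{S}_k^R = \vec{S}_k^L + w_{l_k}^{i_k}\vec{R}_{i_k}$ as a stopping time. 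In scenario \eqref{X1}(b), the vectors $\vec{S}_k^L$ and $\vec{S}_k^R$ were already produced during \eqref{X2}(c) of an earlier iteration $k' < k$, so the claim for them follows directly from the inductive hypothesis.

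To handle the children-processing in \eqref{X2}(c), I would use a secondary induction on $j\in\{1,\ldots,\chi(k)\}$. The set $\mathcal{B}_k$ together with its ordering is $\mathscr{F}(\vec{S}_k^R)$-measurable, hence $\mathscr{F}(\vec{S}_{N_k+j}^L)$-measurable for every $j\geq 1$, since $\vec{S}_{N_k+j}^L \geq \vec{S}_k^R$ by construction. Moreover, any $\varpi(N_k+j)\in\mathcal{B}_k$ satisfies $\xi_{\varpi(N_k+j)}/Q_{\TYPE_j,\TYPE_j} < S_{k;\TYPE_j}^R \leq S_{N_k+j;\TYPE_j}^L$, so the indicator in Lemma \ref{lem:SisStoppingInductionStep} is again identically $1$. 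Invoking the lemma yields $\vec{S}_{N_k+j}^R = \vec{S}_{N_k+j}^L + w_{\varpi(N_k+j)}\vec{R}_{\TYPE_j}$ as a stopping time, and the identity $\vec{S}_{N_k+j+1}^L = \vec{S}_{N_k+j}^R$ then advances the inner induction for free.

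The step I expect to require the most care is the pathwise verification that the relevant indicators equal $1$ (so that the two lemmas produce exactly the expressions appearing in the exploration), together with checking that the random indices $\varpi(k)$ and $\varpi(N_k+j)$ lie in the $\sigma$-field used to invoke Lemma \ref{lem:SisStoppingInductionStep}. Both facts are immediate from the design of Exploration \ref{explore:field}, but they must be invoked in the correct order: first the argmin/sampling lemma (to adapt the index), and only then the increment lemma (to add the weight to the current stopping time). Once this bookkeeping is in place, there is nothing further to verify.
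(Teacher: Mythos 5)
Your proposal is correct and follows essentially the same route as the paper, which gives no separate proof beyond the remark that Lemma \ref{lem:Ystopping0} handles the root steps \eqref{X1}(a) and Lemma \ref{lem:SisStoppingInductionStep}, applied inductively, handles the windows assigned in \eqref{X2}(d). Your write-up simply fills in the bookkeeping the paper leaves implicit (base case, the a.s.\ identification of $\mathcal{U}_{k-1}$ with the unrung clocks, $\mathscr{F}(\vec{S})$-measurability of the indices, and the indicators being identically $1$), and these details check out.
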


\begin{remark}
Lemma \ref{lem:Ystopping0} guarantees that the start of exploration (in \eqref{X1}(a)) of each connected component occurs at a stopping times.
    Lemma \ref{lem:SisStoppingInductionStep} implies that $\vec{S}^L_k$ and $\vec{S}^R_k$ are stopping times for each $k$ such that $\varpi(k)$ is not a root of a connected component (or equivalently, $\varpi(k)$ is discovered in \eqref{X2}(c) at some earlier step).
\end{remark}

The following result will be used to prove the required ``equivalence'' of \eqref{X2} in Exploration \ref{explore:field} and \eqref{G2} in Exploration \ref{explore:Graph}.
\begin{lemma}\label{lem:neighbors}
    Suppose that $\vec{S}$ is an $\mathscr{F}$-stopping time. Let $\mathcal{U}$ be the  $\mathscr{F}(\vec{S})$-measurable collection defined as
    $\{(l,i): i\in[m], l\in\len(\bw^{i}), \xi_l^i/Q_{i,i}> S_i\}.$ 
    Let $(L,I)$ be an $\mathscr{F}(\vec{S})$-measurable random index.

    Then
    \begin{equation*}
        \p\left(\frac{\xi_r^j}{Q_{j,j}} \in[S_j,S_j+ w_{L}^{I}R_{j,I})\Big| \mathscr{F}(\vec{S})\right)1_{[(r,j)\in \mathcal{U}, (L,I)\not\in \mathcal{U}]} = \left(1-e^{-  Q_{j,I} w_{L}^{I} w_r^j}\right) 1_{[(r,j)\in \mathcal{U}, (L,I)\not\in \mathcal{U}]}.
    \end{equation*}
\end{lemma}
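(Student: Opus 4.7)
The plan is to reduce the statement to a direct consequence of the strong Markov property for the field $\bbX$ established in Lemma \ref{L:strongManalogue}, combined with the definition of the matrix $R$.

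First, I would observe that since $(L,I)$ and $\vec{S}$ are both $\mathscr{F}(\vec{S})$-measurable, the threshold $w_L^I R_{j,I}$ is a non-negative $\mathscr{F}(\vec{S})$-measurable random variable, and so is the indicator $1_{[(L,I)\notin \mathcal{U}]}$. The indicator $1_{[(r,j)\in\mathcal{U}]}=1_{[\xi_r^j/Q_{j,j}>S_j]}$ is also $\mathscr{F}(\vec{S})$-measurable, because the $\sigma$-field $\mathscr{F}(\vec{S})$ is generated by $\vec{S}$ together with the collection of exponentials $(\xi_l^k/Q_{k,k})$ occurring at or before $\vec{S}$, as discussed in the proof of Lemma \ref{L:strongManalogue}. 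Consequently, the three indicators can be pulled outside of the conditional probability.

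Next, on the event $\{(r,j)\in\mathcal{U}\}$ we have $\xi_r^j/Q_{j,j}>S_j$, so the event
$\{\xi_r^j/Q_{j,j}\in[S_j,S_j+w_L^I R_{j,I})\}$ coincides with
$\{\xi_r^j/Q_{j,j}-S_j<w_L^I R_{j,I}\}$. Applying the residual exponential identity \eqref{E:residual_time_dist} from the proof of Lemma \ref{L:strongManalogue}, the conditional law of $\xi_r^j/Q_{j,j}-S_j$ given $\mathscr{F}(\vec{S})$, restricted to $\{(r,j)\in\mathcal{U}\}$, is $\operatorname{Exp}(Q_{j,j}w_r^j)$. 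Since the threshold $w_L^I R_{j,I}$ is $\mathscr{F}(\vec{S})$-measurable, I can evaluate the exponential distribution function at this random argument to get
\begin{equation*}
\mathbb{P}\left(\left.\frac{\xi_r^j}{Q_{j,j}}-S_j<w_L^I R_{j,I}\,\right|\mathscr{F}(\vec{S})\right)1_{[(r,j)\in\mathcal{U}]} = \bigl(1-e^{-Q_{j,j}w_r^j\, w_L^I R_{j,I}}\bigr)1_{[(r,j)\in\mathcal{U}]}.
\end{equation*}

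Finally, I would substitute the definition \eqref{D:matrix_R}, namely $R_{j,I}=Q_{j,I}/Q_{j,j}$, so that $Q_{j,j} R_{j,I}=Q_{j,I}$ and the exponent becomes $Q_{j,I}w_L^I w_r^j$. Multiplying through by $1_{[(L,I)\notin\mathcal{U}]}$ yields the claimed identity. The only real subtlety is the justification that \eqref{E:residual_time_dist} applies with an $\mathscr{F}(\vec{S})$-measurable (rather than deterministic) argument $u$; this is a standard measure-theoretic observation once one has the conditional-distribution statement of Lemma \ref{L:strongManalogue}, and would constitute the only non-routine point of the argument.
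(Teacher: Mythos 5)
Your proof is correct and takes essentially the same route as the paper's: both arguments rest on Lemma \ref{L:strongManalogue} (you invoke its residual-exponential identity \eqref{E:residual_time_dist} directly, while the paper phrases the same step as a reduction to $\vec{S}=\vec{0}$ with $\bW$ replaced by $\widetilde\bW$), followed by the memoryless-exponential computation and the identity $R_{j,I}Q_{j,j}=Q_{j,I}$. The extra measurability bookkeeping you supply (pulling out the $\mathscr{F}(\vec{S})$-measurable indicators and evaluating the conditional distribution at the $\mathscr{F}(\vec{S})$-measurable threshold $w_L^I R_{j,I}$) is sound and merely makes explicit what the paper leaves implicit.
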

\begin{proof}
    Due to Lemma \ref{L:strongManalogue}, we can suppose without loss of generality that $\vec{S} = \vec{0}$, where we replace $\bW$ with $\widetilde\bW$.
    
   Since $\xi_r^j\sim \operatorname{Exp}(w_r^j)$, we have
    \begin{equation*}
        \p\left(\xi_r^j/Q_{j,j} \in[0,w_{L}^{I} R_{j,I})\right) = 1 - \exp\left(-Q_{j,j} R_{j,I} w_{L}^{I} w_r^j\right).
    \end{equation*}
    The claim now follows from the identities $R_{j,I}Q_{j,j} \equiv Q_{j,I}$ (see \eqref{D:matrix_R}).
\end{proof}

\subsection{Proof of Proposition \ref{prop:Exprole}}\label{sec:PropExploreProof}
The argument relies on the inductive coupling of the two explorations. We will need the following notation. Let $\mathcal{E}_k$ (resp. $\widetilde{\mathcal{E}}_k$) denote the $\sigma$-algebra generated by the initial $k \wedge K$ (resp.~ $k \wedge \widetilde{K}$) steps of Exploration \ref{explore:field} (resp. \ref{explore:Graph}). 
We begin by noting that by induction
\begin{equation}\label{eqn:EkandF}
    \mathcal{E}_k  = \mathscr{F}\left(\vec{S}^R_{k \wedge K}\right).
\end{equation}
Indeed, note that the algorithm gathers all the information in $\mathcal{E}_k$ while passing through steps \eqref{X1}--\eqref{X3} in the initial $k \wedge K$ rounds (in fact, 
this information is gathered already by the end of \eqref{X2}(c) in round $k \wedge K$). In particular, all the jump times of $\bbX$ occurring before $\vec{S}_{k\wedge K}^R$ are included in $\mathcal{E}_k$,
hence $\mathcal{E}_k\supset \mathscr{F}(\vec{S}^R_{k \wedge K})$. 
In the opposite direction, note that using $\mathscr{F}(\vec{S}_{k \wedge K}^R)$ we can reconstruct the field exploration for the first $k \wedge K$ steps (or rounds), so that $\mathcal{E}_k\subset \mathscr{F}(\vec{S}^R_{ k \wedge K})$. 

Unless needed for additional clarity, we will abuse notation and write ``step $k$'' instead of ``step $k\wedge K$'' (or ``step $k\wedge \widetilde{K}$''). We will also write $\vec{S}_k^\ast$ instead of $\vec{S}_{k\wedge K}^\ast$, where as usual $\ast\in\{L,R\}$.

\subsubsection{The base of induction}
If $k = 1$, we  need to compare steps \eqref{X1}(a) and \eqref{G1}(a). In this case, Lemma \ref{lem:Ystopping0}  
(with $\vec{S}=\vec{0}$) implies that the respective outputs of steps \eqref{X1}(a) and \eqref{G1}(a) are equal in distribution. 
We couple them so that they are equal almost surely.

Recall that $\varpi(1) = (l_1,i_1)$ and similarly $\widetilde\varpi(1) = (\widetilde{l}_1,\widetilde{i}_1).$ 
As just explained, 
in our coupling
$(\widetilde{l}_1, \widetilde{i}_1) = (l_1,i_1)$. Continuing onto \eqref{X2} and \eqref{G2}, for each vertex $v\in \mathcal{U}^*_0 = \widetilde{\mathcal{U}}_0^* 
= \mathcal{U}_0\setminus 
\{\varpi(1)\}
$
\begin{equation*}
    \p(v\in \mathcal{B}_1\big| \mathcal{E}_0, \mathcal{U}_0^*) = 1-\exp\left({-Q_{i_1,\TYPE(v)} w_v w_{l_1}^{i_1}}\right),
\end{equation*}
 due to Lemma \ref{lem:neighbors}, and
similarly, 
\begin{equation*}
    \p\left(v\in \widetilde{\mathcal{B}}_1\big| \widetilde{\mathcal{E}}_0,\widetilde{\mathcal{U}}_0^*\right) = 1-\exp\left(-Q_{i_1,\TYPE(v)} w_v w_{l_1}^{i_1}\right),
\end{equation*}
due to the very definition of $\G$ (in particular, the independence of 
edge connections in DCSBM).  
Therefore, $\mathcal{B}_1 | (\mathcal{E}_0, \mathcal{U}_0^*)
\overset{d}{=} \widetilde{\mathcal{B}}_1 | (\widetilde{\mathcal{E}}_0, \widetilde{\mathcal{U}}_0^*)$,
and more importantly 
$\xi_l^i/Q_{i,i}\in [S_{1;i}^L, S_{1,i}^R)$ 
can be used (verbatim)  to generate the output of \eqref{G2}(c).
More precisely, we repeatedly apply 
Lemma \ref{L:strongManalogue} and Lemma \ref{lem:SareStopping} to see that the residual clocks in Step (c) of \eqref{X2} are still (conditionally) independent exponential random variables. There is no analogue of 
\eqref{X2}(d)
in Exploration \ref{explore:Graph}.
In particular, the sequences  $(\vec{S}_k^*)_k$ do not appear in the statement of Proposition \ref{prop:Exprole}, yet they play an important role in its proof. 
As before, we couple the outputs of
\eqref{X2}(c) and \eqref{G2}(c)
so that they are equal almost surely.

Since \eqref{X3} (resp.~\eqref{G3}) depends deterministically on the output of \eqref{X2} (resp.~\eqref{G2}), we conclude that \eqref{X3} and \eqref{G3} are clearly 
almost surely equal 
(at least, if $k=1$) in our coupling.

\subsubsection{Induction Step} 

Suppose that
on $\{K\geq k-1\}=\{\widetilde{K}\geq k-1\}$
we already have a coupling in which the outputs of the two explorations are identical almost surely in steps $1,2,\dotsm, k-1$. 
We need to extend it so that 
$\{K= k-1\}=\{\widetilde{K}= k-1\}$,
and furthermore that on 
$\{K\geq k\}=\{\widetilde{K}\geq k\}$
the outputs of the two explorations in step $k$ are again identical almost surely.

The induction step is essentially identical to the base case $k =1$, with additional applications of Lemma \ref{lem:SareStopping} whenever necessary. 
Here we 
only sketch the argument.

Recall that the two explorations have the same condition for entering the ``while loop'' in steps 
\eqref{X0} and \eqref{G0}, respectively.

On $\{K\geq k-1\} \cap \{\mathcal{A}_{k-1} \neq \emptyset\} = 
\{\widetilde{K}\geq k-1\} \cap \{\widetilde{\mathcal{A}}_{k-1} \neq \emptyset\}$ the outputs of \eqref{X1} and \eqref{G1} are a deterministic function of the outputs of step $k-1$ in the already constructed coupling, and therefore they are identical almost surely. 
On
$\{K\geq k-1\} \cap \{\mathcal{A}_{k-1} = \emptyset\} = 
\{\widetilde{K}\geq k-1\} \cap \widetilde{\mathcal{A}}_{k-1} = \emptyset\}$
we rely on $\mathcal{U}_{k-1}=\widetilde{\mathcal{U}}_{k-1}$ and therefore 
$\mathcal{U}_{k-1}^{\vec{\rho}}=\widetilde{\mathcal{U}}_{k-1}^{\vec{\rho}}$
almost surely.
Hence $\{K=k-1\}= $
$$
\{ K\geq k-1,
\mathcal{A}_{k-1} = \emptyset,
\mathcal{U}_{k-1}^{\vec{\rho}} =\emptyset
\}
= \{ \widetilde{K}\geq k-1,
\widetilde{\mathcal{A}}_{k-1} = \emptyset,
\widetilde{\mathcal{U}}_{k-1}^{\vec{\rho}} =\emptyset
\}
= \{\widetilde{K}=k-1\},
$$
up to null-sets, and furthermore on $\{K>k-1\}\overset{a.s.}{=}\{\widetilde{K}>k-1\}$ we apply (as in the base of induction)
Lemmas \ref{L:strongManalogue}, \ref{lem:Ystopping0} and \ref{lem:SareStopping} in order 
to couple the outputs of \eqref{X1}(a) and \eqref{G1}(a). 

The coupling in steps \eqref{X2} and \eqref{G2}, and in steps \eqref{X3} and \eqref{G3}, respectively, is done as in the base case.
This concludes the proof of Proposition \ref{prop:Exprole}.

\subsection{Exploration \ref{explore:field} and hitting times \texorpdfstring{$\bT$}{T}}
Recall the definition of $X_{i,j}$ in \eqref{eqn:XfieldDef1}.
Due to elementary properties of independent exponentials, for each fixed $j$, there is an almost surely uniquely defined permutation $\pi^j$ on $\text{len}({\bw}^j)$ letters such that
\begin{equation}
\label{D:pi_hat_j}
    \xi^j_{\pi^j(1)}< \xi^j_{\pi^j(2)}<\dotsm < \xi^j_{\pi^j(\text{len}({\bw}^j))}.
\end{equation}
We will sometimes write $\pi(l,j)$ in place of the value  $\pi^j(l)$ of the permutation $\pi^j$ at $l$.
We shall also sometimes write $\pi(l,j)$ instead of vertex $(\pi^j(l), j)$.

As in the proof of Lemma \ref{lem:Ystopping0}, observe that with probability $1$
\begin{equation}
\label{E:loadfree}
    \bbX(\vec{t}-) = -\vec{t} \quad \text{ if and only if }\quad 
    \vec{t} 
    \leq
    \left(
    \frac{1}{Q_{1,1}} \xi_{\pi(1,1)},\dotsm, 
    \frac{1} {Q_{m,m}}\xi_{\pi(1,m)} 
    \right)
\end{equation}
since the first jump time on the $j$th time-line is $\frac{1}{Q_{j,j}}\xi_{v}$, where $v = \pi(1,j)$,
and since for all $\vec{t}$ with $j$th component $t_j \ge \frac{1}{Q_{j,j}}\xi_{\pi(1,j)}$ we have by definition
$X_j(\vec{t}) \geq X_{j,j}(t_j) \ge -t_j+w_{\pi(1,j)} >-t_j$.

Recall the definition of $Y_1$ in \eqref{X1}(a) and recall that $Y_1\vec{\rho}$ is an $\mathscr{F}$-stopping time (see Lemma~\ref{lem:Ystopping0}).
Due to \eqref{E:loadfree} we have that
\begin{equation}\label{eqn::Tlinear}
    \{T(y) =  y \vec{\rho}\} = \left\{ Y_1\ge y\right\} \text{ a.s.}.
\end{equation}  
The above remarks together with standard properties of exponentials imply the following. 
\begin{lemma} \label{lem:Y1isFirstJump}
    $\bT(y) = y\vec{\rho}$ for $y\le Y_1$ and $\bT(Y_1+)\neq \bT(Y_1)$.
\end{lemma}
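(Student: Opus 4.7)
I would handle the two assertions separately. The first one, $\bT(y)=\vec\rho y$ for all $y\le Y_1$, is really just a pathwise reading of \eqref{E:loadfree}. By the very definition of $Y_1$ in \eqref{X1}(a), one has $\rho_k Y_1 \le \xi_{\pi(1,k)}/Q_{k,k}$ coordinatewise (trivially so when $\rho_k=0$), so for every $y\le Y_1$ the vector $\vec\rho y$ is dominated, componentwise, by the vector of first jump times of the diagonal processes. Applying \eqref{E:loadfree} one gets $\bbX(\vec\rho y-)=-\vec\rho y$, so $\vec\rho y$ is a solution of \eqref{equ_equation_for_ti} for $\vec y=\vec\rho y$. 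Minimality is immediate: any $\vec t'$ with $t'_k<\rho_k y$ for some $k\in [m]_{\vec\rho}$ would satisfy $X_{k,k}(t'_k-)=-t'_k>-\rho_k y$ together with $X_{k,j}(t'_j-)\ge 0$ for $j\ne k$, contradicting \eqref{equ_equation_for_ti} in coordinate~$k$.

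For the second assertion I would exploit the positive jump of size $w_{l_1}^{i_1}$ that $X_{i_1,i_1}$ performs at $t_{i_1}=\rho_{i_1}Y_1$, where $(l_1,i_1):=\varpi(1)$ is the argmin realising $Y_1$ in \eqref{X1}(a). Since the exponentials $\xi_\cdot^{i_1}$ have no ties, there is a random $\eta>0$ such that $X_{i_1,i_1}$ has no further jump on $(\rho_{i_1}Y_1,\rho_{i_1}Y_1+\eta]$, so on this interval $X_{i_1,i_1}(t-)=-t+w_{l_1}^{i_1}$. For $y>Y_1$, set $\vec t:=\bT(y)$. Non-negativity of $X_{i_1,j}$ for $j\ne i_1$ applied to the $i_1$-th coordinate of $\bbX(\vec t-)=-\vec\rho y$ forces $X_{i_1,i_1}(t_{i_1}-)\le -\rho_{i_1} y$. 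A short case distinction (according to whether $t_{i_1}$ lies in the above right-neighbourhood of $\rho_{i_1}Y_1$ or strictly beyond $\rho_{i_1}Y_1+\eta$) then yields
\[
T_{i_1}(y)\ \ge\ \rho_{i_1}Y_1 + \min\bigl\{\eta,\ w_{l_1}^{i_1}-\rho_{i_1}(y-Y_1)\bigr\}.
\]
Letting $y\downarrow Y_1$ gives $T_{i_1}(Y_1+)\ge \rho_{i_1}Y_1+\min\{\eta,w_{l_1}^{i_1}\}>\rho_{i_1}Y_1=T_{i_1}(Y_1)$, which implies $\bT(Y_1+)\ne\bT(Y_1)$.

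I do not expect any serious obstacle: both parts are clean pathwise consequences of \eqref{E:loadfree} and of the explicit piecewise-linear structure of $X_{i,j}$. The only mildly delicate point is ruling out the possibility that $\bT(y)$ could ``jump over'' the first drift interval of $X_{i_1,i_1}$ as $y\downarrow Y_1$; but that alternative only strengthens the lower bound, which is exactly why the simple case analysis above suffices.
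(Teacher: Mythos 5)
Your proof is correct and follows essentially the same route as the paper: part (1) is exactly the pathwise reading of \eqref{E:loadfree} that the paper records as \eqref{eqn::Tlinear}, and part (2) spells out the ``standard properties'' the paper invokes, namely that the root's positive jump $w_{\varpi(1)}$ of $X_{i_1,i_1}$ at $\rho_{i_1}Y_1$ (with $\rho_{i_1}>0$ by the definition of $\mathcal{U}^{\vec\rho}_0$) forces $T_{i_1}(Y_1+)>\rho_{i_1}Y_1$. One cosmetic nit: in your minimality step one only has $X_{k,k}(t'_k-)\ge -t'_k$ (equality may fail past the first jump), but this inequality is all you use, so the argument stands.
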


Now let $\varpi(1),\varpi(2),\dotsm, \varpi(N)$ be the vertices of the first connected component discovered in Exploration \ref{explore:field}. 

More precisely, define 
$N:=\inf\{k: \zeta_{k}=2\}-1$.
In the statement of the next result we use the above notation.
\begin{lemma}\label{lem:jump1}
We have
    \begin{equation}
    \label{E:lem_jump1_1}
        \bT(Y_1) = \vec{S}_1^L\qquad\text{and}\qquad\bT(Y_1+) = \vec{S}_N^R\qquad \text{ almost surely.}
    \end{equation}
    On $\{K>N\}$ the restriction of $\bT$ to $(Y_1, Y_1+Y_2]$ is an affine function 
    $$
    y \mapsto \bT(y)= \bT(Y_1+) +(y- Y_1) \vec{\rho}.
    $$
    
     \noindent
    More generally, for any $l\leq r$ and $p$ positive integers, on the event
    $ \{\zeta_{l-1}=p-1, \zeta_l=p, \zeta_r=p\} \cap (\{K=r\} \cup \{ K> r,\zeta_{r+1}=p+1\} )$
(where
     $\{\varpi(l),\dotsm, \varpi(r)\}$ is the $p^\text{th}$ connected component explored), 
     we have
    \begin{equation}
     \label{E:lem_jump1_2}
        \bT\left(\sum_{n=1}^p Y_n\right) = \vec{S}_l^L \qquad \text{and}\qquad \bT\left(\sum_{n=1}^p Y_n+\right) = \vec{S}_r^R\qquad \text{ almost surely, and}
    \end{equation}
    \noindent
    on the above event intersected with $\{K>r\}$ the restriction of
     $\bT$ to $(\sum_{n=1}^p Y_n, \sum_{n=1}^{p+1} Y_n]$ is an affine function 
    $$
    y \mapsto \bT(y)= \bT\left(\sum_{n=1}^p Y_n+\right) +\left(y- \sum_{n=1}^p Y_n\right) \vec{\rho}.
    $$
\end{lemma}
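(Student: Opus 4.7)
The plan is to proceed in three stages: verify the base identities at $y = Y_1$, extend them to the interval $(Y_1, Y_1 + Y_2]$ via the affine structure of $\bT$ between jumps, and then induct on $p$ via the strong Markov property.

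The identity $\bT(Y_1) = \vec{S}_1^L = Y_1 \vec{\rho}$ is immediate from Lemma \ref{lem:Y1isFirstJump} and the definition of $\vec{S}_1^L$ in step \eqref{X1}(a). For $\bT(Y_1+) = \vec{S}_N^R$, the core step is to verify the algebraic identity
\[
\bbX(\vec{S}_N^R -) = -Y_1 \vec{\rho}.
\]
Telescoping the updates in steps \eqref{X1}(a) and \eqref{X2}(d) across the first component yields $S_{N;i}^R = Y_1 \rho_i + \sum_j R_{i,j} \sM_j(1)$. On the other hand, the windows $[S_{k;j}^L, S_{k;j}^R)$ for $k = 1, \dotsc, N$ tile $[Y_1 \rho_j, S_{N;j}^R)$; combined with the strict minimality of $Y_1$ (no jump of $X_{j,j}$ occurs in $[0, Y_1 \rho_j)$) and with the next-root selection in \eqref{X1}(a) at step $N+1$ (which guarantees that no type-$j$ vertex outside the first component has $\xi_l^j/Q_{j,j} < S_{N;j}^R$), this identifies the jumps of $X_{j,j}$ on $[0, S_{N;j}^R)$ exactly with the type-$j$ vertices of $\cC(1)$, so $W_j(S_{N;j}^R-) = \sM_j(1)$. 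Substituting into $\bbX_i(\vec{t}-) = -t_i + \sum_j R_{i,j} W_j(t_j -)$ at $\vec{t} = \vec{S}_N^R$ then gives $-Y_1 \rho_i$.

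For $y \in (Y_1, Y_1 + Y_2)$, the very definition of $Y_2$ in \eqref{X1}(a) guarantees that no jumps of $X_{j,j}$ occur in $[S_{N;j}^R, S_{N;j}^R + Y_2 \rho_j)$, so the identical computation shows that $\vec{t}(y) := \vec{S}_N^R + (y - Y_1) \vec{\rho}$ satisfies $\bbX(\vec{t}(y)-) = -y\vec{\rho}$, yielding the upper bound $\bT(y) \leq \vec{t}(y)$ and hence $\bT(Y_1+) \leq \vec{S}_N^R$. For the matching lower bound I would establish the following general structural fact: on any interval of continuity of $y \mapsto \bT(y)$, no new jump of any $X_{j,j}$ is crossed by the path $y \mapsto \bT(y)$, so each $W_j(\bT(y)_j -)$ is constant in $y$ and the equation $\bbX(\bT(y)-) = -y\vec{\rho}$ reduces to $\bT(y)_i - y \rho_i = C_i$ for constants $C_i$; that is, $\bT$ is affine with slope $\vec{\rho}$ on every continuity interval. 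Combining this with the upper bound and the known jump of $\bT$ at $Y_1$ pins down $\bT(y) = \vec{S}_N^R + (y - Y_1)\vec{\rho}$ on $(Y_1, Y_1 + Y_2]$ and thus $\bT(Y_1+) = \vec{S}_N^R$.

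Finally, the general identities \eqref{E:lem_jump1_2} for $p \geq 2$ follow by induction using the strong Markov property of Lemma \ref{L:strongManalogue} applied at the $\mathscr{F}$-stopping time $\vec{S}_{l-1}^R$ (the end of the exploration of the $(p-1)$-th component): the shifted field $\bbX(\cdot + \vec{S}_{l-1}^R) - \bbX(\vec{S}_{l-1}^R)$ is distributed as a DCSBM-field on the residual weights, whose own first component coincides with the $p$-th component of the original graph, so applying the base case to the shifted field yields the claim upon shifting back. The main obstacle is the affine/minimality step: one must rigorously verify that $\bT$ admits no jumps in the interior $(Y_1, Y_1 + Y_2)$, and rule out the possibility that a hypothetical competitor $\vec{t}$ with some coordinate $t_{i_0} < S_{N;i_0}^R$ could also satisfy $\bbX(\vec{t}-) = -y\vec{\rho}$, both of which require a careful combinatorial analysis of how the windows $[S_{k;j}^L, S_{k;j}^R)$ partition each timeline and of what constraints this puts on $W_j(t_j -)$ in intermediate regions.
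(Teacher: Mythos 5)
Your outline matches the paper's strategy in its broad strokes: the telescoping identity giving $\bbX(\vec{S}_N^R)=-Y_1\vec{\rho}$, the deterministic drift after $\vec{S}_N^R$ yielding the affine piece and the upper bound $\bT(Y_1+\delta)\leq \vec{S}_N^R+\delta\vec{\rho}$, and the strong Markov property at the end of each component to iterate over $p$. However, there is a genuine gap at exactly the point the paper spends most of its effort: the matching lower bound, i.e.\ the proof that $\bT(Y_1+\eps)>\vec{S}_N^R$ for every $\eps>0$ (the paper's \eqref{E:lem_jump1_suff2}). Your ``general structural fact'' --- that $\bT$ is affine with slope $\vec{\rho}$ on its continuity intervals --- does not close this: it is perfectly consistent with that fact and with the upper bound that the jump of $\bT$ at $Y_1$ be strictly smaller than $\vec{S}_N^R-\vec{S}_1^L$, with the minimal solution for level $-(Y_1+\eps)\vec{\rho}$ sitting at a point having some coordinate $t_{i_0}<S_{N;i_0}^R$. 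Ruling out such a competitor is precisely the minimality analysis you defer to ``a careful combinatorial analysis,'' and without it the identity $\bT(Y_1+)=\vec{S}_N^R$ --- and hence the whole lemma, since the size of the jump is its main content --- is not established.

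For comparison, the paper handles this by an induction over the exploration steps $k=1,\dotsc,N$: using the fact that each non-root vertex $\varpi(p)$ has its clock ring strictly before $S^L_{p;i_p}$, one gets the pointwise lower bounds \eqref{eqn:BoundsForXij1}--\eqref{eqn:BoundsForXij3}, and then, via the recursion \eqref{eqn:SkiX2} and telescoping, that $X_j(\vec{t})\geq -\rho_j Y_1$ whenever $\vec{t}\geq \vec{S}_k^L$ and $t_j\leq S_{k;j}^R$ for some $j$. This shows that no solution of $\bbX(\vec{t}-)=-(Y_1+\eps)\vec{\rho}$ can have any coordinate $\leq S_{k;j}^R$, giving $\bT(Y_1+\eps)>\vec{S}_k^R$ inductively up to $k=N$. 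Your writeup of the base identities and of the component identification $W_j(S_{N;j}^R-)=\sM_j(1)$ is fine (modulo the small continuity point at the stopping time, handled in the paper by Corollary \ref{cor:Continuity1}), and the Markov-property reduction for general $p$ is the paper's argument as well; but as submitted the proposal states rather than proves the key step, so it is incomplete.
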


\begin{proof} We prove in detail the statements which concern the first encountered component, or more precisely the process $\bT$ before and after its first jump. The statements related to the $p$th jump of $\bT$ can be proven in a similar fashion, using the strong  Markov property of $\bbX$. We provide a sketch, and leave the details to an interested reader. 

    The first identity in \eqref{E:lem_jump1_1} is an easy consequence of  the definition of $\vec{S}_1^L$ in \eqref{X1}(a).

    In order to establish 
    the second identity in 
    \eqref{E:lem_jump1_1} 
    it suffices to show that almost surely
 \begin{eqnarray}
 & &  
  \label{E:lem_jump1_suff1}
    \bT(Y_1+)\leq S_N^R +\delta \vec{\rho},
     \text{ for any }\delta >0, \text{ and }\\
     & &
 \label{E:lem_jump1_suff2}
     \bT(Y_1+\eps)> S_N^R,
     \text{ for any }\eps >0. 
 \end{eqnarray}

As a preliminary calculation we show (see Corollary \ref{cor:Continuity1}) that almost surely 
 \begin{equation}
 \label{E:XofSNRisY1rho}
      X_j(S_{N;i}^R-) = 
   X_j(S_{N;i}^R) = \sum_{j=1}^m X_{j,i}(S_{N;i}^R) = -\rho_jY_1, \text{for each } j\in[m].
 \end{equation}
 Let $\vec{e}_k$ be the $k^\text{th}$ standard basis vector of $\R^m$, and note that by linearity
\begin{equation}\label{eqn:comp1Gap}
        \vec{S}_N^R-\vec{S}_1^L = \sum_{p=1}^N \vec{R}_{\TYPE(\varpi(p))} w_{\varpi(p)} = R \left(\sum_{p=1}^N w_{\varpi(p)} \vec{e}_{\TYPE(\varpi(p))} \right) = R \vec{\sM},
    \end{equation}
    where $\vec{\sM}$ is the total weight vector (broken by type, see \eqref{D:masses_M}) 
    of the first explored component.
    
    In addition, for $i\neq j$ we have by construction of the stopping time $\vec{S}_N^R$ (see again \eqref{X2}(d) and \eqref{eqn:XfieldDef1}) that
    \begin{equation*}
        X_{j,i}(S_{N;i}^R) = \sum_{\substack{p: \TYPE(\varpi(p)) = i\\ p\le N}} R_{j,i} w_{\varpi(p)}
    \end{equation*}
    as each jump of $X_{j,i}$ that occurs before time $S_{N;i}^R$ 
    corresponds either to $\varpi(1)$ or to
    a (type $i$) child of some vertex from $\varpi(1),\dotsm, \varpi(N)$.
    Similarly,
    \begin{equation*}
        X_{j,j}(S_{N;j}^R) = -S_{N;j}^R + \sum_{\substack{p: \TYPE(\varpi(p)) = j\\ p\le N}} w_{\varpi(p)}. 
    \end{equation*}
    Adding up the identities in the last two displays  and recalling that $R_{j,j} = 1$ (see \eqref{D:matrix_R}) implies
    \begin{align*}
        \sum_{i=1}^m& X_{j,i}(S_{N;i}^R) = -S_{N;j}^R + \sum_{i=1}^m \sum_{\substack{p: \TYPE(\varpi(p)) = i\\ p\le N}} R_{j,i} w_{\varpi(p)} =- S_{N;j}^R + \sum_{p=1}^N R_{j,\TYPE(\varpi(p))} w_{\varpi(p)}\\
        &= -S_{N;j}^R + \vec{e}_j^TR\vec{\sM} = -S_{1;j}^L = -\rho_j Y_1,
    \end{align*}
    where we used \eqref{eqn:comp1Gap}
    for the second to last identity.

We now know that $\bbX(\vec{S}_N^R) = -Y_1 \vec{\rho}$, and furthermore  recall that the stack of active vertices (of the first explored component) is exhausted at time $\vec{S}_N^R$.
The field $\bbX$ evolves deterministically for $t_l\in (S_{N;l}^R,S_{N;l}^R +  Y_2\rho_l)$ where $Y_2$ is again defined in \eqref{X1}(a), and where 
$\vec{S}_{N+1}^L:=\vec{S}_N^R +Y_2 \vec{\rho}$ on $\{Y_2 < \infty\}$.
More precisely, during $(\vec{S}_{N;l}^R,\vec{S}_{N;l}^R+Y_2 \rho_l)$, the $l$th coordinate of $\bbX$ deterministically decreases at rate $1$.
If $\rho_l=0$ the $l$th coordinate of $\bbX$ is not relevant in \eqref{X1}(a).
In particular, $\{\delta< Y_2\} = \{\bbX(\vec{S}_N^R+\delta \vec{\rho})= - (\delta+Y_1) \rho \}$ and therefore we  have
\begin{equation}
\label{E:interesting_upper_bd_bT}
\{\delta< Y_2\} \subset \{ \bT(Y_1+\delta) \leq \vec{S}_N^R+\delta \vec{\rho}\},
\end{equation}
which is enough for concluding 
\eqref{E:lem_jump1_suff1}.
Note that $\vec{S}_{N+1}^L$ is defined only on $\{Y_2<\infty\}$, but the rest of the argument is also valid 
on $\{Y_2=\infty\}$, the event that the exploration process ends upon exploring the first (and only) connected component which intersects $[m]_{\vec{\rho}}$.
    
We next show \eqref{E:lem_jump1_suff2}. Again due to Corollary \ref{cor:Continuity1} (or by the reasoning in the previous paragraph) we know
 that $\bT(Y_1+\eps)$ is a point of continuity for $\bbX$. We will show by induction that $\bT(Y_1+\eps)\ge \vec{S}_k^R$ for all $k=1,2,\dotsm,N$. 

    To do this, we observe that for each $p = 2,\dotsm,N$ the vertex $\varpi(p) = (l_p,i_p)$ is discovered strictly before the start of its corresponding observation window in \eqref{X2}(a). In terms of the exponential clock $\xi_{l_p}^{i_p}$, this 
    property can be written as
    \begin{equation*}
        \frac{\xi_{l_p}^{i_p}}{Q_{i_p,i_p}} < S^R_{p-1;i_p\color{black}} = S^L_{p;i_p\color{black}}, \ \ p\geq 2,
    \end{equation*} 
    
    where the last identity 
    \color{black}
    above follows form \eqref{X2}(d). 
    The root vertex $\varpi(1)$ is discovered at 
      $S_{1,i_1}^L$,
      the start of its observation window.

    Therefore, on $\{k\leq N\leq K\}$,
    before time $\vec{S}_k^L$ there are at least $\varpi(1),\ldots,\varpi(k)$ already discovered in Exploration \ref{explore:field}.
    So for each $k = 1,2,\dotsm, N$ and $i\neq j$ we have
    \begin{align}\label{eqn:BoundsForXij1}
        X_{j,i}(t_i) &\ge \sum_{p\le k: i_p = i} R_{j,i} w_{\varpi(p)} ,\   \forall  t_i\ge S_{k;i}^L,
        \text{ and moreover}
        \\
        X_{j,j}(t_j) &\ge -S_{k;j}^R + \sum_{p\le k: i_p = j} w_{\varpi(p)}, \  \forall t_j\in[S_{k;j}^L,S_{k;j}^R]\label{eqn:BoundsForXij2} , \text{ and }
     \\
        X_{j,j}(t_j)&\ge -s_j,\ \forall t_j\le s_j. \label{eqn:BoundsForXij2.a}
    \end{align}
    Similarly, if $k = 1$ our exploration algorithm guarantees that for $j\neq i_1$ 
    \begin{align}\label{eqn:BoundsForXij3}
        X_{j,i_1}(t_{i_1}) &\ge R_{j,i_1} w_{\varpi(1)}, \   \forall t_{i_1}\ge S_{1;i_1}^L.
    \end{align} 
    Recall the definition of $\vec{R}_j$ in \eqref{eqn:Rvec}.
   Recall again (see \eqref{X1}(a) and \eqref{X2}(d)) the recursion 
    \begin{equation}\label{eqn:SkiX2}
    S^L_{1;i} = \rho_i Y_1, \ 
    S_{k;i}^R = S_{k;i}^L + R_{i,i_{k}}w_{\varpi(k)},\ k\in [N],\ \text{ and } S_{p;i}^L = S_{p-1;i}^R,\  p\in [N]\setminus\{1\},
    \end{equation} 
    for each $i\in [m]$, where as usual $i_k=\TYPE(\varpi(k))$.
    \color{black}
    In particular, using \eqref{eqn:BoundsForXij2.a}, \eqref{eqn:BoundsForXij3}, and
    the middle identity in \eqref{eqn:SkiX2} we see that if $j\neq i_1$ then for any $\vec{t}$ such that $t_{i_1}\ge S_{1;i_1}^L$ and  $t_j\le S_{1;j}^R$
    \begin{align}
    \label{E:align1}
   \textup{we have }\  X_j(\vec{t})=  \color{black}\sum_{i=1}^m X_{j,i}(t_i)\color{black} &\color{black}\ge \underset{X_{j,i_1}(t_{i_1})}{\underbrace{R_{j,i_1}w_{\varpi(1)}}}  - \underset{X_{j,j}(t_j)}{\underbrace{S_{1;j}^R}} = -\rho_jY_1.
      \end{align}
      Similarly, if $j = i_1$  we apply \eqref{eqn:BoundsForXij2}, monontonicity of off-diagonal processes $X_{i_1,j}(\cdot)$, $j\neq i_1$, and again \eqref{eqn:SkiX2} to conclude that for any $\vec{t}$ such that$t_{i_1}\in[S_{1;i_1}^L, S_{1;i_1}^R]$
      \begin{align}
    \label{E:align2}
    \textup{we have }\  X_{i_1}(\vec{t})=  \color{black} \sum_{i=1}^m X_{i_1,i}(t_i) &\color{black} \ge -S_{k;i_1}^R + w_{\varpi(1)} = -\rho_{i_1}Y_1.
    \end{align}
    The estimates in \eqref{E:align1}--\eqref{E:align2}, combined with the fact that $\bT(Y_1)=^{a.s}\vec{S}_1^L$ immediately imply that $\bT(Y_1+\epsilon) > \vec{S}_1^R$, almost surely, which is the base of our induction argument for \eqref{E:lem_jump1_suff2}.

\color{black}
    Now suppose that we have shown 
    $\{k-1\leq N\leq K\}\subset \cap_{\eps>0} \{\bT(Y_1+\epsilon)>\vec{S}_{k-1}^R\}$.
    We wish to show that $\{k\leq N\leq K\}\subset  \cap_{\eps>0}  \{\bT(Y_1+\epsilon)>\vec{S}_{k}^R\}$.
 Recalling $\vec{S}_{k-1}^R=\vec{S}_k^L$,
we proceed (as in the above analysis on $[\vec{S}_1^L, \vec{S}_1^R]$) to estimate $\bbX$ from below on $[\vec{S}_k^L, \vec{S}_k^R]$.
More precisely, observe 
that if $\vec{t}\geq \vec{S}_k^L = \vec{S}_{k-1}^R$ is such that
$t_j\le S_{k;j}^R$ for some 
$j\in[m]$
then
    \begin{align*}
        \sum_{i=1}^m X_{j,i}(t_i) &\ge \underset{\eqref{eqn:BoundsForXij1}}{\underbrace{\sum_{i\neq j} \sum_{p\le k: i_p = i} R_{j,i_p} w_{\varpi(p)} }} + \underset{\eqref{eqn:BoundsForXij2}}{\underbrace{\sum_{p\le k: i_p = j} w_{\varpi(p)} - S_{k;j}^R}}\\
        &= {{\sum_{p\le k} R_{j,i_p} w_{\varpi(p)}}} - S_{k;j}^R\\
        &= -S_{1;j}^L = -\rho_jY_1.
    \end{align*}
    In the second line above we combined the two summations and used $R_{j,j} =1$.
    For the 
    final two identities we again applied \eqref{eqn:SkiX2} and used telescoping.
    We obtain $\{k\leq N\leq K\}\subset \cap_{\eps>0} \{\bT(Y_1+\epsilon)>\vec{S}_{k}^R\}$, which concludes the induction step and establishes \eqref{E:lem_jump1_1}.

Now consider
$$
\tilde{\bbX} (\vec{t}):= \bbX(\vec{t}+\vec{S}_N^R) - \bbX(\vec{S}_N^R), \ \vec{t}\geq \vec{0}.
$$
Stopping time $\vec{S}_N^R$ is the direct analogue of $\vec{0}$, while  $\vec{S}_{N+1}^L-\vec{S}_N^R$ (if finite, that is well-defined) is the direct analogue of
$\vec{S}_1^L$, where $\widetilde{\bbX}(\cdot)$ now takes the role of $\bbX(\cdot)$.
Therefore, the whole argument above can be repeated for the exploration of the second, and iteratively, of the $p$th connected component of DCSBM intersecting $[m]_{\vec{\rho}}$.
In addition, \eqref{eqn::Tlinear} 
and Lemma \ref{lem:Y1isFirstJump}
apply directly on the shifted process $\widetilde\bbX$ (and its iterations), and imply in particular that 
$$
\widetilde\bT(y) = y\vec{\rho},  \text{ iff } y\le \widetilde{Y}_1,
$$
where clearly 
on the event $\{N+1\leq K\}$ we have $\widetilde{Y}_1\equiv Y_2$.
In addition, note that 
for any $y>0$
(due to \eqref{E:XofSNRisY1rho}) we have
$\widetilde\bT(y):=\inf\{\vec{t}\geq \vec{0}: \widetilde{\bbX}(\vec{t}-)\leq -y\vec{\rho}\}\equiv \inf\{\vec{t}\geq \vec{S}_N^R: \bbX(\vec{t}-)\leq -(y+Y_1)\vec{\rho}\} - \vec{S}_N^R =: \bT(y+Y_1) - \vec{S}_N^R$,  almost surely. 

With these correspondencies, the above linear expression for $\widetilde\bT$ becomes $\bT(Y_1+y)= \vec{S}_N^R + y\vec{\rho}= \bT(Y_1+) + y\vec{\rho}$, where $y \in (0,Y_2]$, and this is clearly equivalent to the (affine map) expression in the statement of the lemma.
\end{proof}

\subsection{Proof of Theorem \ref{thm:discreteEncoding}}\label{sec:proofOfDiscrete}\label{sec:BFW_discrete_proofs}

In previous sections we did most of the work needed for completing the proof of Theorem \ref{thm:discreteEncoding}.

The fact that the connected components appear in a size-biased order follows  from the construction of Exploration \ref{explore:Graph} and the equivalence with Exploration \ref{explore:field} in Proposition \ref{prop:Exprole}. 
Indeed, let $\{\cC_j;j\ge1\}$ be the connected components of $\G$, and recall that 
$\mathscr{S}(\cC_j) = \sum_{(l,i)\in \cC_j} \rho_i Q_{i,i}w_l^i$.  The  size-biased ordering with respect to the weights $(\mathscr{S}(\cC_j))_j$ is conventionally done as follows: let 
\begin{equation}
\label{E:conv_size_biasing}
    E_l\,|\,\sigma(\cC_j,\ j\geq 1)\sim \operatorname{Exp}(\mathscr{S}(\cC_l)), \ \ l \geq 1,   
\end{equation}
be conditionally independent exponentials; the size-biased ordering
$(\cC_{\tau_k})_k$ is defined through a random permutation $(\tau_k)_k$,  which is specified via 
$$E_{\tau_k}<E_{\tau_{k+1}},\ \ k\geq 1, \ \text{almost surely}. $$  
The random ordering of components obtained in  Exploration \ref{explore:Graph} is different from the conventional ordering via \eqref{E:conv_size_biasing} in at least two ways: (a) we use fragments of connected component weights (when searching for the next ``root vertex'') instead of full connected component weights, and (b)  we draw (conditionally independent) exponential variables sequentially rather than simultaneously (this corresponds to our gradual exploration of $\mathcal{G}$).\\
Nevertheless, the two approaches can be easily linked due to these elementary properties: (i) the minimum of $n$ independent exponetial random variables is again an 
exponential variable with rate equal to the sum of $n$ individuals rates, and (ii) $  (E_{\tau_k})_k= (\min_j E_j, \min_{j \neq \tau_1} E_j, \min_{j\not\in \{\tau_1,\tau_2\}} E_j,\ldots)$. 

Now suppose that we are given another family $(F_v)_{v\in \mathcal{G}}$ of (conditionally and) mutually  independent exponentials given $(\cC_j)_{j\geq 1}$, such that
$$
  F_v \,|\, \sigma(\cC_j,\ j\geq 1)\sim \operatorname{Exp}
  (\rho_{\TYPE(v)}Q_{\TYPE(v),\TYPE(v)}w_v), \ \ v\in \mathcal{G}.
$$
Due to (i) above, we have 
$$
 E_l\,|\,\sigma(\cC_j,\ j\geq 1)\overset{d}{=} \min_{v\in \cC_l}  F_v|\,\sigma(\cC_j,\ j\geq 1).
$$
Let $\cC(v)$ denote the connected component of $\mathcal{G}$ which contains $v$.
If $V_1:=\operatornamewithlimits{argmin}_{v\in \mathcal G}  F_v $ let 
$V_2:=\operatornamewithlimits{argmin}_{v\in \mathcal G \setminus \cC(V_1)}  F_v $, 
$V_3:=\operatornamewithlimits{argmin}_{v\in \mathcal G \setminus (\cC(V_1)\cup \cC(V_2))}  F_v $, and continue inductively. The reader should note that here we confound again the graph with its vertex set, for ease of notation.

The main point of this discussion is the observation that, due to (i) and (ii) above, we have
\begin{equation}
\label{E:main_point_sb}
     (E_{\tau_k})_k \overset{d}{=} (\min_v F_v, \min_{v \in \mathcal G \setminus \cC(V_1)}  F_v, \min_{v \in \mathcal G \setminus (\cC(V_1)\cup \cC(V_2)) }F_v,\ldots). 
\end{equation}
We leave it to the interested reader to check that Exploration \ref{explore:Graph} in step \eqref{G1}(a) gradually produces a sequence of exponential random variables equally distributed as the right-hand-side in \eqref{E:main_point_sb}.

Let $\G = \G(\bW,Q)$ and 
recall the notation given before the statement of Theorem \ref{thm:discreteEncoding}.
 Given $\G$, let us generate conditionally independent exponential random variables $(E_r;\, r\ge 1)$ with respective rates $(\sS(r);\, r\ge 1)$.
Combining the formulae for $\bT$ in Lemma \ref{lem:jump1} with the size-biased ordering of connected components 
yields the following.
\begin{corollary}
    \label{coro:representatbT}
    We have
\begin{equation*}
\left(\bT(y);\, y\ge 0\right)\overset{d}{=} \left(\vec{\rho}y + \sum_{r} R\vec{\sM}(r) \, 1_{\{E_r<y\}};\, y\ge 0 \right).
\end{equation*} 
\end{corollary}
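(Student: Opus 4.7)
My plan is to combine the piecewise-affine description of $\bT$ from Lemma~\ref{lem:jump1} with the size-biased ordering discussion preceding the statement of the corollary. The main obstacle will be bookkeeping: matching the chronological order of component discoveries in Exploration~\ref{explore:field} against the size-biased reordering encoded by the auxiliary exponentials $(E_r)_r$; however, the heavy lifting has already been carried out.

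First, I will iterate Lemma~\ref{lem:jump1} (applying the strong Markov property used in its proof at the successive stopping times $\vec{S}_{N_p}^R$) to obtain the piecewise-affine decomposition
\[
\bT(y) \;=\; \vec{\rho}\,y \;+\; \sum_{p\ge 1} \vec{\Delta}(p)\, 1_{\{T_p < y\}}, \qquad y \ge 0,
\]
where $T_p := Y_1 + \cdots + Y_p$ is the $p$-th jump time of $\bT$ and $\vec{\Delta}(p)$ is the corresponding jump size (Definition~\ref{def:Delta}). Applying \eqref{eqn:comp1Gap} iteratively to the $p$-th explored component then identifies $\vec{\Delta}(p) = R\,\vec{\sM}^*(p)$, where $\vec{\sM}^*(p)$ is the vector-valued weight of the component visited at the $p$-th stage.

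Second, I will identify the law of the jump times $(T_p)_p$ with a size-biased draw. By Proposition~\ref{prop:Exprole}, I may switch to Exploration~\ref{explore:Graph}. There, step \eqref{G1}(a) prescribes that $\widetilde{Y}_{\widetilde{\zeta}_k}$ is conditionally exponential with rate $\sum_{r\text{ not yet explored}} \sS(\cC(r))$. Invoking the memorylessness/min-of-exponentials identities (i) and (ii) recalled immediately before the statement, I will deduce the joint identity in law
\[
(T_p,\,\cC^*(p))_{p\ge 1} \;\overset{d}{=}\; (E_{\tau_p},\,\cC(\tau_p))_{p\ge 1},
\]
where $(E_r)_r$ are conditionally independent given $\sigma(\cC(r);r\ge 1)$ with respective rates $\sS(\cC(r))$, and $\tau$ is the (random) size-biased permutation defined through $E_{\tau_1} < E_{\tau_2} < \cdots$. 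In particular, $\vec{\sM}^*(p) = \vec{\sM}(\tau_p)$ under this coupling.

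Finally, substituting into the decomposition above and reindexing by $r = \tau_p$ (components with $\sS(\cC(r)) = 0$ have $E_r = +\infty$ almost surely and contribute nothing, which is consistent with the fact that they are not accessed by Exploration~\ref{explore:field}), I obtain
\[
\bT(y) \;\overset{d}{=}\; \vec{\rho}\,y \;+\; \sum_{r} R\,\vec{\sM}(r)\, 1_{\{E_r < y\}}
\]
as processes in $y$. Promoting this to a process-level equality in law is immediate because both sides are piecewise affine with common slope $\vec{\rho}$, hence entirely determined by their sequences of jump times and jump sizes, which we have matched jointly.
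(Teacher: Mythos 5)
Your proposal is correct and follows essentially the same route as the paper: the paper obtains the corollary by combining the piecewise-affine description of $\bT$ from Lemma \ref{lem:jump1} (whose jumps are $\vec S^R-\vec S^L=R\vec{\sM}$ of the successively explored components, via \eqref{eqn:comp1Gap}) with the size-biased ordering discussion preceding the statement, i.e.\ Proposition \ref{prop:Exprole} together with the min-of-exponentials/memorylessness identities (i)--(ii) and the family $(F_v)$, which identify the exploration's waiting times with the increments $E_{\tau_1},E_{\tau_2}-E_{\tau_1},\dots$ of the conventional size-biased exponentials. Your write-up simply makes explicit the bookkeeping (including the zero-mass components and the strict inequality coming from left-continuity) that the paper leaves to the reader.
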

The strict inequality in the event of the $r$th indicator function is completely consistent with Lemma \ref{lem:jump1} and the left continuity of $\bT$.
While Lemma \ref{lem:jump1} does not concentrate on the behavior of $\bT$ after its final ($\zeta_K$th) jump, it is easy to see that on $(\sum_{i=1}^{\zeta_K} Y_i,  +\infty)$ $\bT$ evolves as a deterministic affine map, parallel to the line $y \mapsto y\vec\rho$.

\section{Excursion representation: preliminaries and special case}

\label{sec:excursions_along_a_curve}
This section is devoted to the analysis of fields and their hitting times in the deterministic setting where the fields are well-behaved. In the next section, we will explore the general case, but this special case is instructive for our later construction and motivation. We start by establishing some notation which will be useful throughout the sequel.

\subsection{Notation}
We will define several classes of functions with the following inclusions
\begin{equation*}
    \Dfu \subset\Dfuw \subset \Df_0^+(\R_+)\subset \Df_0(\R_+)\subset \Df(\R_+).
\end{equation*}
As usual, we denote by $\Df([0,\infty))=\Df(\R_+)$ the Skorohod space of real \cdl\ functions on $\R_+$. Let $\Df_0(\R_+)\subset \Df(\R_+)$ contain all  $f\in \Df(\R_+)$ such that $f(0) = 0$. 
Note that all $f\in \Df(\R_+)$ is necessarily continuous at $0$, and moreover 
if $f\in \Df_0(\R_+)$ then
$\lim_{t\downarrow 0} f(t)=0$.
Furthermore, $\Dfp$ denotes the collection of $f\in \Df_0(\R_+)$ with no negative jumps, or equivalently, such that $f(t)\ge f(t-)$ for all $t$.  A strictly smaller class $\Dfuw$ contains all the non-decreasing $f\in \Df_0(\R_+)$.
Finally, $\Dfu$ denotes the collection of $f\in \Dfuw$, such that $f(t)>0$ for all $t>0$ and $f(t)\to\infty$ as $t\to\infty$. Equivalently, $f\in \Dfu$ if and only if
it is non-decreasing, strictly increasing from the right at $0$, and unbounded on $\R_+$. Given an $f\in \Df(\R_+)$ we will write
\begin{equation*}
\Jcal(f) = \{t: f(t)\neq f(t-)\}
\end{equation*} as the collection of jump times of a function $f$.
 
We will focus on ``well-behaved'' fields $\bbx\in \Df(\R^m_+)$ where
\begin{equation*}
    \Df(\R_+^m):= \left\{\bbx = (x_1,\dotsm,x_m)\,\left|\ \substack{\displaystyle x_i (\vec{t}) = \sum_{j=1}^m x_{i,j}(t_j) ,\\
    \displaystyle 
    x_{i,j}\in \Dfuw, \ \forall j\neq i \textup{ and }
    x_{i,i}\in \Dfp}, \ \forall i\in[m] \right.\, \right\}.
\end{equation*}

Assume that we are given some $\bbx=(x_i)_{i\in [m]} \in \Df(\R^m _+)$ and 
$\vec \rho \in \R^m_+$,  $\vec \rho \not= \vec 0$.
Recalling \eqref{D:Tdeterministic},
for each $y\geq 0$ we write $\bT^{\vec \rho}(\bbx;y)$ or $\bT(y)$ (when $\bbx,\vec \rho$ are specified by context) 
for $\bT(\bbx;y\vec\rho)$, which is the {\em minimal solution} of
\begin{equation}
  \label{equ_equation_for_t_ext}
  x_i(\vec t-)=-\rho_i y, \quad \forall i\in[m] \ \mbox{s.t.}\ t_i<\infty.
\end{equation}
{Abusing notation, we will shortly write~\eqref{equ_equation_for_t_ext} in the form
\begin{equation}
  \label{equ_equation_for_t}
  \bbx(\vec t-)=-\vec\rho y.
\end{equation}}
Note that $\bT^{\vec \rho}(\bbx;y) \in [0,\infty]^m$ is well-defined, according to~\cite[Lemma~2.3]{Chaumont:2020}. {Moreover, if $\bT^{\vec \rho}(\bbx;y) \in \R^m _+$, then also $\bT^{\vec \rho}(\bbx;u)\in\R^m _+$ for all $u \in [0,y)$, since in fact $\bT^{\vec \rho}(\bbx;u)\leq \bT^{\vec \rho}(\bbx;y)$.}
We will frequently omit the symbols $\vec\rho$ and $\bbx$ from the notation if they are clear from context. We also write the coordinates 
\begin{equation*}
    \bT(y) = (T_1(y),T_2(y),\dotsm, T_m(y)) \in[0,\infty]^m.
\end{equation*}

Since our goal is to prove Theorem \ref{thm:gammaExistence}, we also recall the additional assumption \eqref{equ_symmetry_assumption}. This could be called the ``column-wise off-diagonal proportionality'' (for probabilistic interpretations see Section \ref{S:commentsONmodels}): there is some $\vec{\rho}\in(0,\infty)^m$ such that
for each $l\in[m]$ and all $i,j \not= l$
\begin{equation}
\label{E:offdiagproportion}
\frac{ x_{i,l}(t) }{ \rho_i }=\frac{ x_{j,l}(t) }{ \rho_j },\  \text{ for all } t\geq 0.
\end{equation}
Under this additional hypothesis,
we can and will introduce the following notation
\begin{equation} 
  \label{D:xstari}
 x_{*,l}(t):= \frac{ x_{j,l}(t) }{ \rho_j }, \quad t\geq 0,
\end{equation}
where $j$ is any element of $[m]\setminus l$. Let 
$$\Dfmr \subset \Df(\R^m _+)$$ denote the collection of all fields $\bbx \in \Df(\R^m _+)$ which also satisfy \eqref{E:offdiagproportion}.

If $f$ is a real function of a real variable, let us denote by $ \underline f$ the ``past infimum'' of $f$:
  $$
\underline f(t)=\inf\limits_{ r \in [0,t] }f(r), \quad  t\geq 0.
$$

In the sequel we will often work with $\underline x_{i,i}$ 
instead of $x_{i,i}$ for $i\in [m]$. 
The main advantage of $\underline x_{i,i}$ over $x_{i,i}$ is its continuity (here we use the fact that $x_{i,i}\in D_0^+(\R_+)$) and monotonicity.
The following analogue of $x_i$
\begin{equation}
\label{equ_definition_of_underline_x_i}
  \underline x_i(\vec t):=\underline x_{i,i}(t_i)+\sum_{ j \not= i } x_{i,j}(t_j), \quad \vec t \in \R^m _+,
\end{equation}
will be particularly useful to us.
We naturally write $\underline \bbx$ for $(\underline x_1,\ldots, \underline x_m)$.

We will be interested in curves $\vec{\gamma}:\R_+\to \R_+^m$ such that $\gamma_i$ is non-decreasing for each $i\in[m]$. 
For $\vec l,\vec r \in \R^m $ we recall that $\vec l<\vec r$  (resp. $\vec l\leq \vec r$)  if $l_i<r_i$ (resp. $l_i\leq r_i$) for each $i \in [m]$. 
We also set $\vec r\pm\delta=(r_i\pm\delta)_{i \in [m]}$ for $\vec r \in \R^m $ and any $\delta >0$ a scalar. Let $(\vec l,\vec r)= \prod_{ i=1 }^{ m } (l_i,r_i)$ for $\vec l<\vec r$.

\subsection{Preliminary properties of \texorpdfstring{$\bT(y)$}{T(y)}} 
\label{S:preliminaty_properties_T}

Recall \eqref{equ_equation_for_t_ext} 
and \eqref{equ_equation_for_t}. In particular, $\bT(y)\equiv \bT^{\vec \rho}(\bbx;y) =(T_1(y),\ldots,T_m(y))$ is the (component-wise) minimizer of \eqref{equ_equation_for_t}.
Hence if ${\rm sol}(y):=\{ \vec r=(r_1,r_2,\ldots,r_m)\geq \vec 0: \vec r \text{ solves } \eqref{equ_equation_for_t}\}$ then 
\begin{equation}\label{E:comp_wise_min}
T_i(y) = \min_{\vec r \in {\rm sol}(y)} r_i, \quad  \forall i\in [m].
\end{equation}

It is easy to see that the original minimization problem (with equality) is solution equivalent to the one (with inequalities) where
the component-wise minimum is chosen from 
\begin{equation}
\label{E:setsolnsineq}
\{ \vec r=(r_1,r_2,\ldots,r_m)\geq \vec 0: \bbx(\vec r -)\leq -y \vec\rho\}
\end{equation}
instead of 
${\rm sol}(y)$.
Furthermore, let us denote by $\|\vec t \|_1=\sum_i t_i$ the usual $\ell^1$-norm of $\vec t$.
In the setting where all the components of $\bT(y)$ are finite, we have the following useful equivalence.
\begin{lemma}
\label{lem:minim_T} Provided that $\bT(y)\in\R_+^m$, the above optimization (minimization) problem for $\bT(y)$ is equivalent to 
\begin{equation}
\label{E:minim_T}
\left\{
\begin{array}{c}
 \bbx(\vec t-)=-\vec\rho y,\\
\|\vec t \|_1 \to \min.
\end{array}
\right. 
\end{equation}
\end{lemma}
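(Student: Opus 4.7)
The plan is to argue that both minimization problems have the same unique solution, namely $\bT(y)$. Recall that by hypothesis $\bT(y) \in \R_+^m$, and by construction (via \cite{Chaumont:2020}) $\bT(y)$ lies in the solution set $\mathrm{sol}(y) = \{\vec{r} \geq \vec 0 : \bbx(\vec{r}-) = -\vec\rho y\}$ and is its component-wise infimum, as recorded in \eqref{E:comp_wise_min}.

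First, I would verify one direction: $\bT(y)$ minimizes $\|\vec t\|_1$ over $\mathrm{sol}(y)$. Indeed, for every $\vec t \in \mathrm{sol}(y)$, we have $T_i(y) \leq t_i$ for all $i \in [m]$, hence $\|\bT(y)\|_1 = \sum_i T_i(y) \leq \sum_i t_i = \|\vec t\|_1$. So $\bT(y)$ is a solution to the program \eqref{E:minim_T}.

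For the converse direction, I would show that any minimizer $\vec t^\star$ of \eqref{E:minim_T} must coincide with $\bT(y)$. By definition $\vec t^\star \in \mathrm{sol}(y)$, so the component-wise minimality \eqref{E:comp_wise_min} yields $\bT(y) \leq \vec t^\star$ coordinate-wise. Combining this with the optimality of $\vec t^\star$ and the previous paragraph gives
\begin{equation*}
\sum_{i=1}^m T_i(y) = \|\bT(y)\|_1 \geq \|\vec t^\star\|_1 = \sum_{i=1}^m t^\star_i \geq \sum_{i=1}^m T_i(y).
\end{equation*}
Hence $\sum_i (t^\star_i - T_i(y)) = 0$ with every summand non-negative, forcing $t^\star_i = T_i(y)$ for each $i$, i.e.\ $\vec t^\star = \bT(y)$.

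This establishes that $\bT(y)$ is the unique minimizer of \eqref{E:minim_T}, and conversely the unique minimizer of \eqref{E:minim_T} is the component-wise infimum of $\mathrm{sol}(y)$, proving the equivalence. There is no real obstacle here beyond carefully invoking the definition of $\bT(y)$ as the component-wise infimum; the argument is essentially a one-line consequence of the fact that the $\ell^1$-norm is strictly monotone in each coordinate on $\R_+^m$, so a componentwise minimum automatically minimizes any such monotone objective (uniquely).
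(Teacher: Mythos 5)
Your proof is correct and follows essentially the same route as the paper: both directions rest on the component-wise minimality recorded in \eqref{E:comp_wise_min} together with the monotonicity of the $\ell^1$-norm on $\R_+^m$. The paper compresses this into one line, while you additionally spell out the uniqueness of the $\ell^1$-minimizer, which is a harmless (and slightly more complete) elaboration of the same idea.
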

\begin{proof}
Due to  ${\rm sol}(y)\subset [0,\infty)^m$ and
\eqref{E:comp_wise_min},
 the component-wise minimum over 
${\rm sol}(y)$ is also the vector in ${\rm sol}(y)$ 
which minimizes the $l_1$-norm.
\end{proof}

It is straight-forward from \eqref{equ_equation_for_t} that
$$
\frac{x_i(\bT(y)-)}{\rho_i}  = -y, \quad i \in [m],
$$
if $\bT(y)\in\R_+^m$.
Moreover, recalling \eqref{equ_definition_of_underline_x_i}, we get the following.
\begin{lemma} 
  \label{lem_minimal_solution_via_underline_x}
Let $\bT(y)$, $y\geq 0$, be as defined above and suppose $\bT(y)\in \R_+^m$.
Then, for each $y\geq 0$, $\bT(y)$ is
  \begin{enumerate}
    \item[i)]  the component-wise minimizer of
\begin{equation}
\label{equ_underline_x_for_T}
	  \underline \bbx(\vec t-)=-\vec\rho y ,
	\end{equation}
and the solution to
\begin{equation}
\label{E:minim_T_underline}
\left\{
\begin{array}{c}
 \underline \bbx(\vec t-)=-\vec\rho y,\\
\|\vec t \|_1 \to \min,
\end{array}
\right. 
\end{equation}
      \item[ii)]  $\underline x_{i,i}(t)>\underline x_{i,i}(T_i(y))$, for each $i \in [m]$ and every $t<T_i(y)$.
  \end{enumerate}
\end{lemma}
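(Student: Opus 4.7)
The plan is to reduce both parts to the identity
\begin{equation*}
x_{i,i}(T_i(y)-) = \underline x_{i,i}(T_i(y)), \qquad \forall i \in [m].
\end{equation*}
Call this identity $(\dagger)$. The inequality $\underline x_{i,i}(T_i(y)) \leq x_{i,i}(T_i(y)-)$ is automatic. For the reverse, I will argue by contradiction: if $\underline x_{i,i}(T_i(y)) < x_{i,i}(T_i(y)-)$ for some $i$, then $x_{i,i}$ dips strictly below the level $x_{i,i}(T_i(y)-)$ before time $T_i(y)$. Since $x_{i,i} \in \Dfp$ has only positive jumps, there is an earliest time $\tau < T_i(y)$ at which $x_{i,i}$ attains this level from above, and at this $\tau$ one has $x_{i,i}(\tau-) = x_{i,i}(T_i(y)-)$. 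Replacing the $i$-th coordinate of $\bT(y)$ by $\tau$ yields a candidate $\vec t^*$ satisfying $x_i(\vec t^*-) = -\rho_i y$ and, by monotonicity of the off-diagonal $x_{k,i}$, the weak inequalities $x_k(\vec t^*-) \leq -\rho_k y$ for every $k \neq i$.

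From $\vec t^*$ I construct a proper solution of $\bbx(\vec t-) = -\vec\rho y$ strictly below $\bT(y)$ in the $i$-th coordinate, contradicting the minimality of $\bT(y)$. This is done by iterated coordinate-wise relaxation: each $k$ with $x_k(\vec t^*-) < -\rho_k y$ is reassigned to the first time at which $x_{k,k}$ reaches the required level; using the no-negative-jumps structure of $x_{k,k}$, this procedure is monotonically non-increasing and converges to a fixed point which is an original solution. The monotonicity argument follows the spirit of the construction of the minimal solution in \cite{Chaumont:2020}. I expect this iterative adjustment to be the main obstacle: one must carefully control how lowering one coordinate propagates through the jumps of the off-diagonals in the remaining columns.

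With $(\dagger)$ established, part (i) is immediate. Substituting $x_{i,i}(T_i(y)-) = \underline x_{i,i}(T_i(y)-) = \underline x_{i,i}(T_i(y))$ (using continuity of $\underline x_{i,i}$) into $\bbx(\bT(y)-) = -\vec\rho y$ shows that $\bT(y)$ solves \eqref{equ_underline_x_for_T}. For the component-wise minimality: given any solution $\vec t'$ of \eqref{equ_underline_x_for_T}, the first-hitting / running-infimum correspondence for $\Dfp$-functions produces a vector $\vec \tau' \leq \vec t'$ with $\bbx(\vec \tau'-) \leq -\vec\rho y$, and a further relaxation (again using the iterative adjustment above) upgrades $\vec \tau'$ to a bona fide solution of the original equation dominated by $\vec t'$. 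Minimality of $\bT(y)$ for the original equation then gives $\bT(y) \leq \vec t'$. The equivalence with the $\ell^1$-formulation \eqref{E:minim_T_underline} is the same argument as in Lemma \ref{lem:minim_T} applied to $\underline \bbx$ in place of $\bbx$.

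Part (ii) then follows from (i) by an entirely analogous argument: if $\underline x_{i,i}(\tau) = \underline x_{i,i}(T_i(y))$ for some $\tau < T_i(y)$, substituting $\tau$ for $T_i(y)$ in $\bT(y)$ preserves the $i$-th underlined equation (thanks to continuity of $\underline x_{i,i}$ and $(\dagger)$), and iterative downward adjustment of the remaining coordinates yields an underlined solution strictly smaller than $\bT(y)$ in the $i$-th coordinate, contradicting the minimality established in part (i).
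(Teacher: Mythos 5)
Your reduction of everything to the identity $x_{i,i}(T_i(y)-)=\underline x_{i,i}(T_i(y))$ is a reasonable reformulation (it is exactly the statement that $\bT(y)$ solves \eqref{equ_underline_x_for_T}, given that it solves the original equation), and your first-passage construction of $\tau<T_i(y)$ with $x_{i,i}(\tau-)=x_{i,i}(T_i(y)-)$ is sound, since $x_{i,i}\in\Dfp$ and $x_{i,i}(T_i(y)-)\le 0$. The genuine gap is the step you yourself flag as the crux and then use three separate times: the ``iterated coordinate-wise relaxation'' that is supposed to upgrade a vector satisfying $\bbx(\vec t^*-)\le-\vec\rho y$ (equality in row $i$, inequalities elsewhere) to an exact solution of $\bbx(\vec t-)=-\vec\rho y$ dominated by it. A downward iteration of first-passage reassignments does not obviously converge to an equality solution: along a decreasing limit $t^{(n)}_k\downarrow\tau_k$ the terms $x_{j,k}(t^{(n)}_k-)$ converge to $x_{j,k}(\tau_k)$, not to $x_{j,k}(\tau_k-)$, and a coordinate that is readjusted infinitely often only retains the inequality in the limit; so the fixed point of your scheme is guaranteed to satisfy $\bbx(\vec\tau-)\le-\vec\rho y$ but not the equalities. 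The construction in \cite{Chaumont:2020} that you appeal to is an \emph{increasing} iteration started from $\vec 0$, for which left limits pass to the limit correctly; it cannot be transplanted ``in spirit'' to the downward direction without a new argument. As written, the proofs of $(\dagger)$, of the componentwise minimality in (i), and of (ii) all rest on this unproven step.

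The fix is that the upgrade-to-equality detour is unnecessary. The paper records, just before Lemma \ref{lem:minim_T}, that the componentwise minimizer over the inequality set \eqref{E:setsolnsineq} is again $\bT(y)$ (a consequence of the increasing construction of the minimal solution in \cite{Chaumont:2020}). Granting that, your $\vec t^*$ already lies in \eqref{E:setsolnsineq} with $t^*_i=\tau<T_i(y)$, which is the desired contradiction and proves $(\dagger)$; your single simultaneous first-passage reduction $\vec\tau'\le\vec t'$ with $\bbx(\vec\tau'-)\le-\vec\rho y$ then gives $\bT(y)\le\vec\tau'\le\vec t'$ for the componentwise minimality in (i), with no iteration; and (ii) becomes the one-line coordinate-substitution contradiction. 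This is essentially the paper's route: its proof of (i) is by definition together with \cite[Lemma~2.3.4]{Chaumont:2020}, and its proof of (ii) is precisely the substitution $\vec t^*=(T_1(y),\ldots,t^*,\ldots,T_m(y))$ applied to the underlined field, contradicting the inequality-form minimality of $\bT(y)$.
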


\begin{proof} 
The first statement is a simple consequence of the definitions and the hypotheses. It also follows from~\cite[Lemma~2.3.4]{Chaumont:2020}.

In order to show ii), suppose that for some $t^*< T_i(y)$ we have $\underline x_{i,i}(t^*)\leq\underline x_{i,i}(T_i(y))$. Then it is easy to see that the vector $\vec{t}^*:=(T_1(y),\ldots,T_{i-1}(y), t^*,T_{i+1}(y),\ldots, T_m(y))$, which is strictly smaller than $\bT(y)$ in component $i$, satisfies
$$
\underline \bbx(\vec{t}^*-) \leq \underline{\bbx}(\bT(y)-) = -\vec \rho y, 
$$
contradicting the minimality of $\bT(y)$ (in the minimizing problem \eqref{E:setsolnsineq}).
\end{proof}

\begin{lemma}
\label{lem_T_is_lcrl}
The map $y \mapsto T_i(y)$ is strictly increasing and left-continuous for each $i \in [m]$.
\end{lemma}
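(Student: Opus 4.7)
My plan is to handle the two claims separately, relying heavily on Lemma~\ref{lem_minimal_solution_via_underline_x} so that I can work with $\underline{\bbx}$ rather than $\bbx$; this replaces the problematic diagonal terms $x_{i,i}$ (merely in $\Dfp$, hence not monotone) by the continuous and non-increasing $\underline{x}_{i,i}$. Throughout I shall assume $\rho_i>0$ for the strict-monotonicity claim in coordinate $i$; the general monotonicity $\bT(y_1)\le \bT(y_2)$ for $y_1\le y_2$ follows immediately from the fact that the set of $\vec{t}\ge \vec{0}$ with $\underline{\bbx}(\vec{t}-)\le -\vec{\rho}y$ shrinks as $y$ grows, so the minimizer in the sense of~\eqref{E:setsolnsineq} can only increase.

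For strict monotonicity, suppose $y_1<y_2$ and assume, towards a contradiction, that $T_i(y_1)=T_i(y_2)=:t^*_i$ for some $i$ with $\rho_i>0$. By Lemma~\ref{lem_minimal_solution_via_underline_x}(i), both vectors $\bT(y_1)$ and $\bT(y_2)$ satisfy $\underline{\bbx}(\vec{t}-)=-\vec{\rho}y$, so
\begin{align*}
-\rho_i y_2 &=\underline{x}_{i,i}(t^*_i-)+\sum_{j\neq i} x_{i,j}(T_j(y_2)-)\\
&\ge \underline{x}_{i,i}(t^*_i-)+\sum_{j\neq i} x_{i,j}(T_j(y_1)-) =-\rho_i y_1,
\end{align*}
where the inequality uses $T_j(y_1)\le T_j(y_2)$ together with $x_{i,j}\in\Dfuw$ (non-decreasing) for $j\neq i$. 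Dividing by $-\rho_i<0$ yields $y_2\le y_1$, a contradiction.

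For left-continuity at $y$, fix an increasing sequence $y_n\uparrow y$. By monotonicity, $\bT(y_n)$ is coordinate-wise non-decreasing and bounded above by $\bT(y)$, so $\vec{t}^*:=\lim_n \bT(y_n)\in\R^m_+$ exists and satisfies $\vec{t}^*\le \bT(y)$. I will show that $\vec{t}^*$ itself solves $\underline{\bbx}(\vec{t}-)=-\vec{\rho}y$; combined with minimality of $\bT(y)$ from Lemma~\ref{lem_minimal_solution_via_underline_x}(i) this forces $\bT(y)\le \vec{t}^*$, whence equality. Fix $i$ and pass to the limit in
\[
-\rho_i y_n=\underline{x}_{i,i}(T_i(y_n)-)+\sum_{j\neq i} x_{i,j}(T_j(y_n)-).
\]
For the diagonal summand, since $x_{i,i}\in\Dfp$ the function $\underline{x}_{i,i}$ is continuous on $\R_+$, so $\underline{x}_{i,i}(T_i(y_n)-)=\underline{x}_{i,i}(T_i(y_n))\to\underline{x}_{i,i}(t^*_i)=\underline{x}_{i,i}(t^*_i-)$. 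For each off-diagonal summand, monotonicity of $x_{i,j}$ gives $x_{i,j}(T_j(y_n)-)=\sup_{s<T_j(y_n)}x_{i,j}(s)$, and monotone convergence yields $\lim_n x_{i,j}(T_j(y_n)-)=\sup_{s<t^*_j}x_{i,j}(s)=x_{i,j}(t^*_j-)$ regardless of whether $T_j(y_n)$ eventually equals $t^*_j$ or approaches it strictly from below. Summing, $\underline{x}_i(\vec{t}^*-)=-\rho_i y$ as required.

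The one step where care is genuinely needed is the passage to the limit of the left-limits $x_{i,j}(T_j(y_n)-)$ and $\underline{x}_{i,i}(T_i(y_n)-)$: a priori $\bbx$ is only càdlàg, and if one worked with $x_{i,i}$ instead of $\underline{x}_{i,i}$ the diagonal term could jump upward at $t^*_i$ and break the identity. This is precisely where both hypotheses encoded in $\Df(\R^m_+)$ intervene — monotonicity of off-diagonals and absence of negative jumps of the diagonals (via the continuity of $\underline{x}_{i,i}$) — and this is the only non-routine ingredient of the argument.
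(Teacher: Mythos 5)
Your proof is correct and follows essentially the same route as the paper's: strict monotonicity by expanding $\underline x_i(\bT(y)-)$ via Lemma \ref{lem_minimal_solution_via_underline_x} and using monotonicity of the off-diagonal entries to reach a contradiction, and left-continuity by taking the monotone limit $\vec t^{\,*}=\lim_n \bT(y_n)\le \bT(y)$ and invoking minimality of $\bT(y)$. The only difference is that you spell out the verification that $\vec t^{\,*}$ actually solves $\underline\bbx(\vec t-)=-\vec\rho\, y$ (continuity of $\underline x_{i,i}$ plus monotone convergence of the off-diagonal left limits), a step the paper leaves implicit.
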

\begin{proof}
As commented already, it is clear from the properties of $\bbx$ and $\underline \bbx$ (inherited from those of $x_{i,j}$ as $i,j$ range through $[m]$) that $y\mapsto T_i(y)$ is non-decreasing for each $i$.

Suppose that $y'>y$, so that $-\vec \rho y' <  -\vec \rho y$.
Assuming there would be at least one 
$i \in [m]$ such that $T_i(y)=T_i(y')$, we could expand (using the non-strict monotonicity of $T_j$ for each $j\in [m]$, together with the monotonicity of $x_{i,j}$ for all $i\neq j$)
\begin{align*}
  -\rho_i y&= \underline x_i(\bT(y)-)=\underline x_{i,i}(T_i(y))+
\sum_{ j \not= i } x_{i,j}(T_j(y)-)\\
&=\underline x_{i,i}(T_i(y'))+
\sum_{ j \not= i } x_{i,j}(T_j(y)-)\\
  &\leq \underline x_{i,i}(T_i(y'))+\sum_{ j \not= i } x_{i,j}(T_j(y')-)=\underline x_i(\bT(y')-)=-\rho_i y',
\end{align*}
which leads to a contradiction.

Note that \cite[Lemma~2.3~4.]{Chaumont:2020} implies the left-continuity of $\bT$.
This can be verified directly by taking a sequence $y_n \nearrow y$, defining
 $\bT^*(y) := \lim_n \bT(y_n) = \sup_n \bT(y_n)$ and using the monotonicity of $y \mapsto \bT(y)$ to get $\bT^*(y) \leq \bT(y)$, and the minimality of $\bT(y)$ to get the reversed inequality.
\end{proof}
\begin{remark}
It is not surprising that $\bT$ is left-continuous, since it is an $m$-dimensional analogue of the left-continuous generalized inverse (the inequality in \eqref{E:setsolnsineq} is not strict).
Since $\bT$ is also (component-wise) strictly increasing, it is easy to see that it is in particular a non-decreasing left-continuous map with right limits ({\em ndlcrl} for short) function.
\end{remark}
Lemma \ref{lem_minimal_solution_via_underline_x} is quite helpful when solving for $\bT(y)$.
Indeed, since
 $\bT(y)$ is the solution of \eqref{E:minim_T_underline} we now also know that
$$
\frac{\underline x_i(\bT(y)-)}{\rho_i} = - y, \quad i \in [m],
$$
and, therefore, that $\bT(y)$ solves
\begin{equation}
\label{E:important_identity_one}
\left\{
\begin{array}{l}
\displaystyle\frac{\underline x_1(\vec t-)}{\rho_1} = \frac{\underline x_i(\vec t-)}{\rho_i}=- y,  \quad \forall i \in [m],\\\\
\|\vec t \|_1 \to \min.
\end{array}
\right. \,
\end{equation}

\subsection{Solving for \texorpdfstring{$\bT(y)$}{T(y)} -- Special Case}\label{sub:solving_for_T}
Recall \eqref{D:xstari} and the definition of $\Dfmr$.
In this section and in the next section 
we assume that  $\bbx \in \Dfmr$ for some given $\vec\rho$. Our aim is to construct
a continuous curve $\vec\gamma:[0,\infty) \to \R^m $  whose values will contain $(\bT^{\vec \rho}(\bbx;y),\, y\geq 0)$.
This will enable us to encode the jumps of $\bT^{\vec \rho}=\bT$ by considering the excursions above past minima of real-valued functions $x_i(\vec\gamma(s))$, $s\geq 0$, $i\in [m]$. 
We will arrive to a suitable choice of $\vec\gamma$ by trying to solve for $\bT(y)$.

For each $i\in [m]$ let us define
\begin{equation} 
  \label{equ_function_g}
  g_i(t):=x_{*,i}(t)-\frac{\underline x_{i,i}(t)}{\rho_i}, \quad t\geq 0.
\end{equation}
It is easy to check from the definitions
(of $\Dfmr$ in particular)
that $g_i\in \Dfuw$ for each $i\in [m]$.

After elementary algebraic manipulations (including several cancellations due to \eqref{equ_symmetry_assumption}) the optimization problem  \eqref{E:important_identity_one} 
can be replaced by
\begin{equation}
\label{E:important_identity_two}
\left\{
\begin{array}{l}
x_1(\vec t-)=- {\rho_1} y,\\
g_1(t_1-)=g_i(t_i-),  \quad \forall i \in [m],\\
\|\vec t \|_1 \to \min.
\end{array}
\right. 
\end{equation}
Indeed, the constraint $\underline x_1(\vec t-)=- {\rho_1} y$ can be replaced by $x_1(\vec t-)=- {\rho_1} y$ due to \cite[Lemma~2.3.4]{Chaumont:2020}, or alternatively due to the fact that the component-wise minimal solution of 
$$ x_1(\vec t-) = -\rho_1 y, \ \
\underline x_i (\vec t-) = -\rho_i z, \ \forall i \in [m], \text{ and some }z \geq 0, 
$$
is again necessarily equal to $\bT(y)$ (the parameter $z$ is free, and we can use monotonicity of $z\mapsto \bT(z)$).
At this intermediate step we know that $\bT(y)$ is uniquely determined as the solution of  \eqref{E:important_identity_two}.

In order to circumvent several technical issues, we presently make the following additional assumption (this hypothesis is dropped in Section \ref{sub:construction_of_a__special_curve} due to a novel concept and a considerable amount of additional work):
\begin{align*}
\label{H}
\tag{SC}
    x_{i,j}(t)\text{ are strictly increasing and continuous for all } i\neq j.
\end{align*} 
In particular, for each $i$, $t\mapsto g_i(t)$ is a strictly increasing continuous function such that $g_i(0) = 0$. We also suppose that $g_i(t)\to\infty$ as $t\to\infty$ for all $i$, which can also by stated by a more complicated assumption placed on the field $\bbx$.
Consequently, each $g_i$ is a homeomorphism, and we let $g_i^{-1}$ denote its inverse. 
\begin{remark}
Anticipating analysis in Section \ref{sub:construction_of_a__special_curve}, it is practical for us to continue writing $\vec{t}-$ instead of $\vec{t}$ in constraints involving $\bbx$.  
\end{remark}

The above strict monotonicity and continuity implies that for any given $y$
\begin{equation}
\label{A1}
\tag{A1}
g_i(T_i(y)-)=g_i(T_i(y)), \text{ or equivalently, that } \ x_{*,i}(T_i(y)-)=x_{*,i}(T_i(y)), \quad \forall i \in [m],
\end{equation}
and also  that
\begin{equation}
\tag{A2}
\label{A2}
 g_i^{-1} \circ g_i(T_i(y))=T_i(y) , \quad \forall i \in [m].
\end{equation}
Since $\bT(y)$ solves $g_1(t_1-) = g_i(t_i-)$ according to \eqref{E:important_identity_two}, the two properties \eqref{A1}--\eqref{A2} yield
\begin{equation}
\label{E:important_identity_three}
T_i(y)= g_i^{-1} \circ  g_1(T_1(y)).
\end{equation}
In particular, we see that 
$$
\bT(y)=(T_1(y),g_2^{-1}\circ g_1(T_1(y)),\ldots, g_m^{-1}\circ g_1(T_1(y))),
$$
where $T_1(y)$ is the minimizer of
$$
\left\{
\begin{array}{l}
x_1(t_1-, g_2^{-1} \circ g_1(t_1)-, \ldots, g_m^{-1} \circ g_1(t_1)-)=- {\rho_1} y,\\
\displaystyle t_1 + \sum_{i\neq 1} g_i^{-1} \circ  g_1(t_1)  \to \min.
\end{array}
\right. 
$$
This analysis can be improved by
introducing a convenient reparametrization of \eqref{E:important_identity_two}:

\begin{equation}
\label{E:important_identity_four}
\left\{
\begin{array}{l}
x_1(\vec t -)=- {\rho_1} y,\\
g_1(t_1-)=g_i(t_i-),  \quad \forall i \in [m],\\
s=\sum_{i=1}^m t_i,\\
s \to \min.
\end{array}
\right. 
\end{equation}
Define
\begin{equation} 
  \label{equ_f_kappa}
  f(u):=\sum_{ i=1 }^{ m } g_i^{-1}(u), \ u\geq 0, \ \text{ and }\ 
\kappa:=f^{-1},
 \end{equation}
where $\kappa$ is the usual inverse of the homeomorphism $f$.

Define
\begin{equation}
\label{D:s_of_y}
s(y):= \sum_{i=1}^m T_i(y) = \| \bT(y)\|_1,
\end{equation}
and note that  
$y \mapsto s(y)$ is again left-continuous (in fact it is a ndlcrl map), admitting at most countably many points of discontinuity. 
Recalling \eqref{E:important_identity_three}, we see that if $u = g_1(T_1(y)-)$ then $g_{i}^{-1}(u) = T_i(y)$ and so $f(u) = s(y)$, at least in the case when $\bT(y)\in\R_+^m$. Hence we deduce that $(\bT(y),s(y))$ solves
\begin{equation}
\label{E:important_identity_four_a}
\left\{
\begin{array}{l}
x_1(\vec t-)=- {\rho_1} y,\\
t_i=g_i^{-1}\circ g_1 (t_1),  \quad \forall i \in [m],\\
s= f(g_1(t_1)),\\
s \to {\rm min},
\end{array}
\right.
\end{equation}

Note that $f$ is strictly increasing, since the auxilliary functions $(g_i)_{i\in [m]}$ are assumed to be strictly increasing and continuous. Using \eqref{A1} and the reasoning of the previous paragraph we conclude {that $f(g_1(T_1(y)-)) = f(g_1(T_1(y))) = s(y)$. Applying the  inverse  $\kappa = f^{-1}$ to the last equation we see that $g_1(T_1(y)) = \kappa(s(y))$. It is now immediate from \eqref{A2} or from \eqref{E:important_identity_four_a} that}
$$
T_1(y)=g_1^{-1} \circ \kappa (s(y)) = g_1^{-1} \circ \kappa (\| T(y) \|_1),
$$
where $s(y)$ is the minimal solution of \eqref{E:important_identity_four_a}.
Note that there is nothing special with $i=1$, which we initially took in \eqref{E:important_identity_two} as the reference index. 
The same reasoning as above leads to the following conclusion: under assumptions 
\eqref{A1}--\eqref{A2} 
we get 
\begin{equation}
\label{E:T_via_s_sc}
\bT(y)=(\vec\gamma_{sc}(s(y))),
\end{equation}
where
$$
\vec\gamma_{sc}(s):=(g_1^{-1} \circ \kappa (s),\ldots, g_m^{-1} \circ \kappa (s )),
$$
and where $ s(y)$ is the minimal solution to
\begin{equation}
\label{E:s_via_x_j_circ}
x_j (\vec\gamma_{sc}(s)-) = -\rho_j y,
\end{equation}
for any (and every) $j\in [m]$. Here the subscript ``$sc$'' just stands for special case.

\section{Excursion representation: smooth composition and the general case}\label{sec:generalexcursionrep}

Assumptions \eqref{A1}--\eqref{A2} are cumbersome to check and false in general. {Indeed, \eqref{H} is  almost surely false for the fields constructed in \eqref{eqn:discField1}.}
We will soon construct a generalization of the ``composed with'' operator which proves quite useful in the sequel, and might be of general interest. As far as we are aware, this novel concept, which could be studied on the level of undergraduate calculus, has not yet appear in the literature.

\subsection{Smooth compositions}\label{S:non_decreasing_inver_comp}

Recall briefly the technical issues of our construction of $\vec \gamma_{sc}$ in the previous section. 
More precisely, these are the steps in the previous construction of the solution $\bT(y)$ which would fail (in the sense of mathematical rigor) without \eqref{H} or \eqref{A1}--\eqref{A2}. 

 In this section, we no longer assume \eqref{H}.
The auxiliary functions $(g_i)_{i\in [m]}$ are no longer homeomorphisms, however they are still non-decreasing and right-continuous functions.
In the next section we will assume that $g_i\in \Dfu$ for each $i\in [m]$.
\color{black}
We henceforth write \textit{ndrcll} for non-decreasing right-continuous functions with left-limits. 
From now on $g_i^{-1}$ will denote the \textit{generalized inverse} of $g_i$.
Recall that if $h$ is ndrcll, its (right-continuous generalized) inverse $h^{-1}$ is defined as follows
\begin{equation}
\label{D:gen_inv}
h^{-1}(s):= \inf\{u>0\,:\,h(u)>s\}.
\end{equation}

The construction of $\vec \gamma_{sc}$ relied on properties \eqref{A1} and \eqref{A2}. The former requires some smoothness, and the latter is trickier to mimic since $h^{-1}\circ h$ need not equal the identity. 
Our composition-like operator $\tcirc$ constructed below is such that
\begin{equation}\label{equ:inverse}
g\tcirc g^{-1} = g^{-1}\tcirc g = \textup{id}
\end{equation}
for each $g\in \Dfu$.

Let us recall a few basic useful inequalities, which relate a ndrcll map and its inverse:
  \begin{equation} 
  \label{equ_h_larger_u}
  h(u)>s \quad \implies \quad  h^{-1}(s) \leq u,
  \end{equation}
  \begin{equation}
  \label{equ_h_smaller_u}
  h(u)\leq s \quad \implies \quad  h^{-1}(s) \geq u,
  \end{equation}
  and by contraposition of \eqref{equ_h_larger_u}
  \begin{equation} 
  \label{equ_equ_h_inv_larger_s}
  h^{-1}(s)>u \quad \implies \quad h(u)\leq s.    
  \end{equation}
It is also easy to see that 
\begin{equation*}
    (h^{-1})^{-1} = h.
\end{equation*} One can see \cite[Chapter 0]{RY.99} for more information.

Before turning to the construction of  $\tcirc$, let us recall some standard properties of the inverse, most of which will be used without further mention in the sequel.
These are all elementary consequences of definitions and inequalities \eqref{equ_h_larger_u}--\eqref{equ_equ_h_inv_larger_s}, and their proofs are left to the reader. We recall that $\Jcal(f)$ is the collection of jump times for a function $f$.
 
\begin{lemma}
  \label{lem_properties_of_right_inverse}
  Let $h \in \Dfu$ and $h^{-1}$ be as in \eqref{D:gen_inv}. 
  \begin{enumerate}
    \item [i)] If $h$ is strictly increasing from the right at some $u\geq 0$, then $h^{-1}(h(u))=u$.

    \item [ii)] If $h(h^{-1}(s))=s$ for some $s\geq 0$, then $h^{-1}$ is strictly increasing from the right at $s$. 

  \item [iii)]  $h^{-1}(h(u-))\geq u$ and $h^{-1}(h(u)-)\leq u$ for all $u\geq 0$. 
In particular, if $h(u-)<h(u)$ (i.e. $u\in \Jcal(h)$) for some $u\geq 0$, then $h^{-1}(h(u-))=h^{-1}(h(u)-)=u$.

  \item [vi)] If $h(h^{-1}(s)-)=h(h^{-1}(s))$ for some $s\geq 0$, then $h(h^{-1}(s))=s$.
  \end{enumerate}
\end{lemma}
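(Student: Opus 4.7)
The plan is to reduce each of the four items to the infimum definition \eqref{D:gen_inv} of $h^{-1}$ combined with monotonicity, right-continuity of $h \in \Dfu$, and the one-line implications \eqref{equ_h_larger_u}--\eqref{equ_equ_h_inv_larger_s}. All four statements are standard facts about generalized inverses, so the work is bookkeeping: I will split each equality into two inequalities and identify which hypothesis each of them uses.

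For part i), the inequality $h^{-1}(h(u)) \geq u$ is free from monotonicity alone, since every $v<u$ satisfies $h(v)\leq h(u)$ and so is excluded from $\{w: h(w)>h(u)\}$. The reverse inequality uses the hypothesis directly: strict increase of $h$ from the right at $u$ supplies, for every $\eps>0$, some $v \in (u,u+\eps]$ with $h(v)>h(u)$, hence $h^{-1}(h(u)) \leq u+\eps$, and I let $\eps \downarrow 0$. For part ii), I set $u=h^{-1}(s)$ and fix an arbitrary $s'>s$. Since $h(u)=s<s'$, right-continuity of $h\in\Dfu$ at $u$ produces $\delta>0$ with $h(u+\tau)<s'$ for $\tau\in[0,\delta)$; monotonicity then forces $\{w:h(w)>s'\}\subset[u+\delta,\infty)$, so $h^{-1}(s')\geq u+\delta > u = h^{-1}(s)$, which is exactly the claimed strict right-increase of $h^{-1}$ at $s$.

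For part iii), the first inequality is the monotonicity half of i) with $h(u)$ replaced by $h(u-)$, using $h(v)\leq h(u-)$ for $v<u$. For the second, every $s<h(u)$ satisfies $h(u)>s$ and hence $h^{-1}(s) \leq u$ by \eqref{equ_h_larger_u}; passing to the left limit $s\uparrow h(u)$ yields $h^{-1}(h(u)-)\leq u$. For the \textbf{in particular} clause I exploit that $h(u-)<h(u)$ when $u\in\Jcal(h)$: then $u\in\{w:h(w)>h(u-)\}$ yields $h^{-1}(h(u-))\leq u$ (so the first inequality upgrades to equality), and every $s\in[h(u-),h(u))$ satisfies $h(v)\leq h(u-)\leq s$ for $v<u$, forcing $h^{-1}(s)\geq u$ and hence $h^{-1}(h(u)-)\geq u$ upon letting $s\uparrow h(u)$.

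For part iv), I write $u=h^{-1}(s)$ and extract $h(u)\leq s$ and $h(u)\geq s$ separately. No $v<u$ belongs to $\{w:h(w)>s\}$, so $h(v)\leq s$ for all $v<u$; letting $v\uparrow u$ gives $h(u-)\leq s$, and the hypothesis $h(u-)=h(u)$ promotes this to $h(u)\leq s$. Conversely, for each $\eps>0$ the infimum characterization supplies some $v\in[u,u+\eps)$ with $h(v)>s$, whence $h(u+\eps)\geq h(v)>s$ by monotonicity; right-continuity of $h$ at $u$ transmits this to $h(u)\geq s$ as $\eps\downarrow 0$. The only mild care required anywhere is in the \textbf{in particular} clause of iii), where one must resist the temptation to invoke a one-sided continuity of $h^{-1}$ and instead argue directly at the jump time $u$; otherwise everything reduces to tracking right limits versus left limits correctly.
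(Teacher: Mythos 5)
Your proof is correct: each of the four parts is verified by splitting into two inequalities and invoking only the infimum definition \eqref{D:gen_inv}, monotonicity, right-continuity, and the implications \eqref{equ_h_larger_u}--\eqref{equ_equ_h_inv_larger_s}, which is precisely the elementary route the paper has in mind when it declares these facts ``elementary consequences of definitions and inequalities \eqref{equ_h_larger_u}--\eqref{equ_equ_h_inv_larger_s}'' and leaves the proof to the reader. Your care in the jump case of iii) (arguing directly at $u$ rather than through any one-sided continuity of $h^{-1}$) is exactly the right point to be careful about, and the limiting arguments $s\uparrow h(u)$, $v\uparrow u$, $\eps\downarrow 0$ are all justified by monotonicity and right-continuity as you use them.
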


\begin{remark}
Clearly for $g\in \Dfu$, $g(u)>g(u-)$ (i.e. $u\in \Jcal(g)$) if and only if the inverse image $(g^{-1})^{-1}(\{u\})$ of $g^{-1}$ at $u$ is the (positive length) interval $[g(u-),g(u)]$, provided $g$ is strictly increasing from the right at $u$, or $[g(u-),g(u))$ otherwise.
Note that $g \circ g^{-1} = \id$ except on the union of  $(g^{-1})^{-1}(\{u\})$  (in some cases, taken without the right boundary point),
over all $u\in \Jcal(g)$.
The set of jump points for $g$ is at most countable.
However the above set of exceptions can be quite large.
Indeed, for ``pure jump'' functions $g \in \Dfu$ we have that $\bigcup_{u\in \Jcal(g)} (g^{-1})^{-1}(\{u\})=(0,\infty)$. 
It will be convenient to use below an example of such a function
\begin{equation}
\label{D:gtwo}
g_e(u):= \sum_{k=1}^\infty \frac{1}{k} \, 1_{[1/(k+1),1/k)} +\sum_{j=1}^\infty j \, 1_{[j,j+1)}.
\end{equation}
\end{remark}

Let $\DJfu$ consist of all $g \in \Dfu$ such that $g$ is both strictly increasing from the left and from the right at any jump point $u$ of $g$. If $g\in \DJfu$ has a jump at $u>0$ then $(g^{-1})^{-1}(\{u\}) = [g(u-),g(u)]$ and moreover 
$g \circ g^{-1} (s)=s$ for $s=g(u)$.
Therefore it is natural to define $g\tcirc g^{-1} \equiv g\circ g^{-1}=\id$ on the good set
$$
G(g):= \left(\bigcup_u \, [g(u-),g(u))\right)^c,
$$
and to define $g\tcirc g^{-1}$ as a {\em linear spline} on $G(g)^c$.

In particular $g\tcirc g^{-1}$ is defined on $[g(u-),g(u))$
by linearly interpolating through \sloppy $(g(u-),\lim_{\{v \nearrow g(u-): v\in G(g)\}} g\tcirc g^{-1}(v))=(g(u-),g(u-))$ and $(g(u), g\tcirc g^{-1}(g(u)))=(g(u), g(u))$. As a result we get $g\tcirc g^{-1}=\id$.

\begin{remark}
\label{rem:DJfu}
\begin{enumerate}
    \item[(a)] The good set $G(g)$ is dense in any left neighborhood of $g(u-)$, using the fact that $g$ jumps at $u$ and the assumption $g \in \DJfu$. Therefore, it is possible to take the left limit above through points in $G(g)$. 
    \vspace{-3mm}
    \item[(b)] We could have just set $g\tcirc g^{-1} := \id$ instead of going through the above ``construction by linear approximation'', but the point here is that (under mild and natural ``compatibility'' assumptions, see Definition \ref{D:compatible} below) the final step in the above construction can be repeated in the context where $g$ is an element of $\Dfu$, and where $g^{-1}$ is replaced by another function $\kappa\in \Dfu$.
\end{enumerate}
\end{remark}

Function $g_e$ defined in \eqref{D:gtwo} is an element of $\Dfu\setminus\DJfu$ and, moreover, it is not strictly increasing from the right at any of its (countably many) jumps. 
Furthermore, the good set $G(g_e)$ consists of a single point $0$. 
Consider any three of its consecutive jumps, these could be for example $u_i=i$, with $i=1,2,3$
(the conclusion is the same in general).
If $s_i:=g_e(u_i)$, then $g_e\circ g_e^{-1} (s_i)=g_e(u_{i+1}) = s_{i+1}$, for $i=1,2$.
One could be tempted to define of $g_e\tcirc g_e^{-1}$ 
 by linearly interpolating 
through the points $(s_i,g_e\circ g_e^{-1} (s_i))$ and $(s_{i+1}, g_e\circ g_e^{-1} (s_{i+1}))$
on each $[s_i,s_{i+1}]$.
The result is again a continuous function, but clearly different from the identity map.

However, one could extend $\tcirc $ 
in a different and better way.

\begin{definition}
\label{D:compatible}
We say that two elements $g$ and $\kappa$ of $\Dfu$ are {\it compatible} if 
\begin{equation}
\label{H1}\tag{H1}
\kappa^{-1}(\{ u \})= \{ r\geq 0:\ \kappa(r)=u \} 
  \text{ has positive length whenever } u\in \Jcal(g),
\end{equation}
\begin{equation}
\label{H2}\tag{H2}
\text{ and }  g(\kappa(s-))=g(\kappa(s)-) 
      \text{ whenever } \kappa(s-)<\kappa(s).
\end{equation}
\end{definition}

Assumption \eqref{H1} means precisely that the inverse $\kappa^{-1}$ of $\kappa$ also jumps at each jump point $u$ of $g$. In this case, the segment $\kappa^{-1}(\{ u \})$ equals $[\kappa^{-1}(u-),\kappa^{-1}(u)]$ (resp. $[\kappa^{-1}(u-),\kappa^{-1}(u))$) provided $\kappa^{-1}$ is strictly increasing from the right at $u$ (resp.~constant on $[u, u+\delta)$ for some $\delta>0$).

We proceed by analogy to the construction of $g\tcirc g^{-1}$ for $g\in \DJfu$.
\begin{definition}\label{def_tilde_compose} Suppose that the pair of functions $(g,\kappa)$ satisfies \eqref{H1} and \eqref{H2}. Define a new function $g\tcirc \kappa$ by 
\begin{itemize}
\item[(i)]
if $s\not\in \bigcup_{u\in \Jcal(g)}  [\kappa^{-1}(u-), \kappa^{-1}(u)]$ let $g\tcirc \kappa(s)=g \circ \kappa (s)$,
\item[(ii)]
 for each jump point $u\in \Jcal(g)$, on $[\kappa^{-1}(u-), \kappa^{-1}(u)]$ 
define $g\tcirc \kappa$ 
to be the line segment through the points $(\kappa^{-1}(u-), g(u-))$ and $(\kappa^{-1}(u), g(u))$.
\end{itemize}
\end{definition}

From now on we refer to $\tcirc$ as ``smoothly composed with''. 
As discussed immediately after Remark \ref{rem:DJfu}, there are situations where 
``smoothly composed with'' and ``composed with'' differ on $(0,\infty)$, so 
$\,\tilde\circ\,$ is not a direct extension/generalization of $\circ$. If $\kappa=g^{-1}$ then it is easy to see that both \eqref{H1} and \eqref{H2} are satisfied and moreover that
$g\tcirc \kappa= g\tcirc g^{-1}=\id$.

\begin{figure}[h]
    \centering
    \includegraphics[width=.95\textwidth]{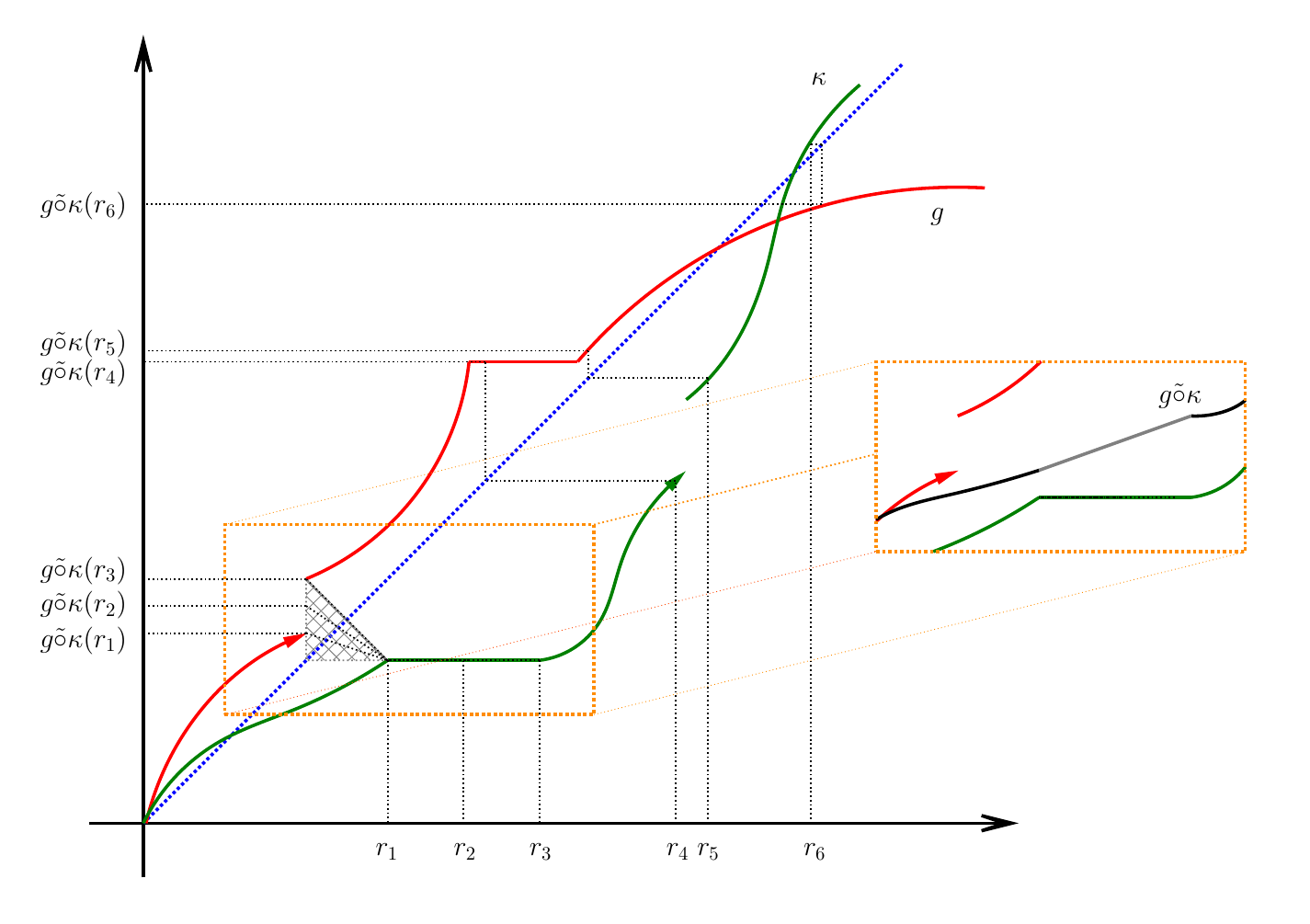}
    \caption{The red curve is the graph of $g$, while the green curve is the graph of $\kappa$. The rectangle with dotted orange borders is an interesting region, since it includes a jump $u$ of $g$, as well as the image of $\kappa$ on an interval which includes $[r_1, r_3]=[\kappa^{-1}(u-),\kappa^{-1}(u)]$. Its enlarged copy on the right depicts the graph of $\gamma = g\tcirc \kappa$ in blue.
    In particular, $\gamma$ is a continuous (linear interpolation) function on $[r_1,r_3]$. Also note that (since $g$ and $\kappa$ are compatible) $g$ must be constant on $[\kappa(s-),\kappa(s)]$, where $s$ is the jump of $\kappa$ in $[r_4,r_5]$.
    } 
    \label{fig:compositions}
\end{figure}

\subsubsection{Graphs of \texorpdfstring{$g\circ \kappa$}{gK} and \texorpdfstring{$g\tilde\circ\kappa$}{gK}}
\label{para:compositions}    

Suppose that $(g,\kappa)$ is a pair of compatible functions in $\Dfu$.
Let us assume that the graphs of $g$ and $\kappa$ are drawn in the same Cartesian system.
(see Figure \ref{fig:compositions} for an illustration). 
    Recall that in order to construct $g\circ\kappa (t)$ using one starts with the point $(t,0)$ on the abscissa, searches for $(t,\kappa(t))$ on the graph of $\kappa$, from there moves horizontally to  $(\kappa(t),\kappa(t))$ on the diagonal $y=x$;
    and finally searches along the vertical line $x=\kappa(t)$ for the point   $(\kappa(t),g(\kappa(t))$ on the graph of $g$.
    This was done in the constructions of  $g\tcirc\kappa(r_k) = g\circ \kappa(r_k)$ for $k\in\{4,5,6\}$ 
    in Figure \ref{fig:compositions} (at these points $\kappa$ is strictly increasing and continuous).

\noindent
     One can analogously construct $g\tcirc\kappa(r_j)$ for $j\in\{1,2,3\}$, knowing that
      $[\kappa^{-1}(u-),\kappa^{-1}(u)] = [r_1,r_3]$ for some jump point $u = \kappa(r_j)$ of $g$. 
      In this general setting the final vertical move from $(u,u) (= (\kappa(r_j),\kappa(r_j))$ depends on $j$, or more precisely on the position of $r_j$ within $[r_1, r_3]$. 
      Since
      $r_1=\min\{s:\kappa(s)=u\}$,
     the procedure ends at $(\kappa(r_1),g(u-))$, the ``lowest point''  with abscissa $u$ in the closure of the graph of $g$.
    Since $r_3=\max\{s:\kappa(s)=u\}$,
    the procedure ends at $(\kappa(r_3),g(u))$, the ``highest point''  with abscissa $u$ in the closure of the graph of $g$.
    In general, any $r_2\in[r_1,r_3]$ has representation $ r_2(\lambda)= r_1+\lambda(r_3-r_1)$ for some $\lambda \in [0,1]$.
    Given such $\lambda$, the procedure ends at 
    $(\kappa(r_2),g(u-) +\lambda (g(u) - g(u-))$.
    
\color{black}
    
\begin{remark}\label{rem_interpolation_formula_for_tilda_circ} Our proofs below do not rely on an explicit formula for $g\tcirc \kappa$; however, we include it here for readers' benefit. Note that the family of sets $\{\kappa^{-1}(\{ u \}),\ u\geq 0\}$ is a partition of $\R_+$. Then for every $s,u\in\R_+$, such that $s \in \kappa^{-1}(\{ u \})$
\begin{align*}
  (g\tcirc \kappa(s))&= g(u-) + \frac{ g(u)-g(u-) }{ \kappa^{-1}(u)-\kappa^{-1}(u-)}(s-\kappa^{-1}(u-))\\
  &= g(u) - \frac{ g(u)-g(u-) }{ \kappa^{-1}(u)-\kappa^{-1}(u-)}(\kappa^{-1}(u)-s).
\end{align*} 
In the expression above, we assume that $A:=\frac{ g(u)-g(u-) }{ \kappa^{-1}(u)-\kappa^{-1}(u-)}$ equals zero if $g$ has no jump at $u$. 
Due to~\eqref{H1} $\kappa^{-1}(u)-\kappa^{-1}(u-)>0$ if $u\in \Jcal(g)$, so $A$ is always well defined.

Note in addition that it could be $\kappa^{-1}(v)-\kappa^{-1}(v-)>0$  even if $g(v)=g(v-)$, and then  the restriction of 
$g\tilde\circ\kappa$ on $[\kappa^{-1}(v-),\kappa^{-1}(v))$ is again a linear (constant) function $y(s)\equiv g \circ \kappa(v)$.

\end{remark}

Even though the above remark is not necessary for the following lemma, using it makes the following straightforward:
\begin{lemma}
\label{lem:circ_tilde_circ_compare}
   Suppose that $(g,\kappa)$ is a compatible pair. Then $g(\kappa(t)-)\leq g\tilde{\circ}\kappa(t) \le g(\kappa(t))$ for all $t\geq 0$.
\end{lemma}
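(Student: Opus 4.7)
The plan is to argue by the two defining cases of $g\tcirc\kappa$. Case~(i), where $t$ lies outside every closed interval $[\kappa^{-1}(u-),\kappa^{-1}(u)]$ with $u\in\Jcal(g)$, is immediate: $g\tcirc\kappa(t)=g\circ\kappa(t)=g(\kappa(t))$, so the claim reduces to $g(\kappa(t)-)\le g(\kappa(t))$, which is just the monotonicity of $g$.

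In case~(ii), fix $u\in\Jcal(g)$ with $t\in[\kappa^{-1}(u-),\kappa^{-1}(u)]$; hypothesis~\eqref{H1} guarantees this interval is non-degenerate, so the linear-interpolation rule of Definition~\ref{def_tilde_compose}(ii) gives the enclosure $g(u-)\le g\tcirc\kappa(t)\le g(u)$. It therefore suffices to show $g(\kappa(t)-)\le g(u-)$ and $g(u)\le g(\kappa(t))$. For $t$ in the open interior of the interval, the standard identities $\kappa^{-1}(u-)=\inf\{r:\kappa(r)\ge u\}$ and $\kappa^{-1}(u)=\inf\{r:\kappa(r)>u\}$ together with the monotonicity of $\kappa$ force $\kappa(r)=u$ for every $r\in(\kappa^{-1}(u-),\kappa^{-1}(u))$, so both bounds become equalities.

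The substantive work is at the two endpoints. At $t=\kappa^{-1}(u-)$, the interior analysis combined with right-continuity of $\kappa$ still yields $\kappa(t)=u$ (this is where \eqref{H1} is needed, to guarantee that there are interior points approaching $t$ from the right); if $\kappa$ also jumps at $t$, so that $\kappa(t-)<u$, hypothesis~\eqref{H2} gives $g(\kappa(t)-)=g(\kappa(t-))\le g(u-)$, the last step by monotonicity of $g$. At $t=\kappa^{-1}(u)$ one similarly obtains $\kappa(t-)=u$ and $\kappa(t)\ge u$; either $\kappa$ is continuous at $t$ and the bounds are trivial, or $\kappa$ jumps with $\kappa(t)>u$, in which case \eqref{H2} delivers
\[
g(\kappa(t)-)=g(\kappa(t-))=g(u)=g\tcirc\kappa(t),
\]
while $g(\kappa(t))\ge g(u)$ comes from the monotonicity of $g$.

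The main (and essentially only) obstacle is this last endpoint: absent \eqref{H2}, a jump of $\kappa$ strictly past $u$ could a priori push $g(\kappa(t)-)$ above the linear-interpolation value $g(u)$; hypotheses~\eqref{H1} and~\eqref{H2} are precisely what is needed to rule this out and to keep $g\tcirc\kappa$ sandwiched between $g(\kappa(t)-)$ and $g(\kappa(t))$ everywhere.
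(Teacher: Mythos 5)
Your proof is correct and follows essentially the paper's route: the paper reads the sandwich directly off the interpolation formula of Remark \ref{rem_interpolation_formula_for_tilda_circ} (for $s\in\kappa^{-1}(\{u\})$ with $u=\kappa(s)$, the value $g\tcirc\kappa(s)$ is a convex combination of $g(u-)=g(\kappa(s)-)$ and $g(u)=g(\kappa(s))$), which is exactly what your case analysis verifies by hand from Definition \ref{def_tilde_compose}. Your separate treatment of the endpoint $t=\kappa^{-1}(u)$ at which $\kappa$ jumps strictly past $u$, resolved via \eqref{H2} (together with \eqref{H1} for non-degeneracy of the interpolation interval), is precisely the point that the formula's consistency already encodes, so the two arguments coincide in substance.
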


\begin{remark}
\label{rem:positiveslopeetc}
\begin{enumerate}
    \item[(a)] Note that each line segment in (ii) of Definition~\ref{def_tilde_compose} is increasing (its slope,  denoted by $A$ in Remark \ref{rem_interpolation_formula_for_tilda_circ}, is strictly positive).
    \vspace{-3mm}
    \item[(b)] If $v<u$ is another jump of $g$ (and therefore of $\kappa^{-1}$) then $g(v-)<g(v)\leq g(u-)< g(u)$.
We conclude that $g\tcirc\kappa$ restricted to 
 $\bigcup_{u: \Jcal(g)}  [\kappa^{-1}(u-), \kappa^{-1}(u)]$
is a (strictly) increasing function.
\vspace{-3mm}
    \item[(c)] From \eqref{H1} and the right-continuity of $\kappa$ at $\kappa^{-1}(u-)$ we conclude that at each jump point $u$ of $g$ it must be $\kappa(\kappa^{-1}(u-)) =u$, and so $g \circ \kappa(\kappa^{-1}(u-))=g(u)$.
    \vspace{-3mm}
    \item[(d)] The definition above does not rely on any additional properties of $\kappa$. In particular, it could be that $\kappa(\kappa^{-1}(u))> u$, and also that $g(\kappa(\kappa^{-1}(u)))> g(u)$. Similarly, it may not be possible to access $\kappa^{-1}(u-)$ from the left through the good set of points in Definition \ref{def_tilde_compose} (i).
\end{enumerate}

\end{remark}

\begin{remark}
\label{rem:consecjumps}
Condition \eqref{H1} is necessary for the interpolating line in Definition \ref{def_tilde_compose} (ii) to be well-defined.
Also note that it could happen that for two successive jumps  of $g$, occurring respectively at $u_1$ and $u_2>u_1$, we have $\kappa^{-1}(u_1)=\kappa^{-1}(u_2-)$. This means that $\kappa^{-1}$ is a constant function on $[u_1,u_2)$, and also that $\kappa$ jumps at
point $\kappa^{-1}(u_1)=\kappa^{-1}(u_2-)$ from value $u_1=\kappa( \kappa^{-1}(u_1)-)=\kappa( \kappa^{-1}(u_2-)-)$ to value 
$u_2= \kappa( \kappa^{-1}(u_1))=\kappa( \kappa^{-1}(u_2-))$.
Assumption \eqref{H2} with $s= \kappa^{-1}(u_1)$ is necessary for compatibility of the two different definitions of $g\tcirc \kappa$ at the point $\kappa^{-1}(u_1)=\kappa^{-1}(u_2-)$.
\end{remark}

\begin{lemma}
  \label{lem_first_property_of_tilde_circ}
 Let $g,\kappa$ be elements of $\Dfu$ such that $(g,\kappa)$ satisfies \eqref{H1} and \eqref{H2}. Then, provided that $u\in \Jcal(g)$ is a jump point of $g$,\\
(a) $g \tcirc  \kappa(\kappa^{-1}(u-)) =g(u-) \geq g \tcirc  \kappa (s)$,\, for all $s<\kappa^{-1}(u-)$,\\
(b) $g \tcirc  \kappa(\kappa^{-1}(u)) =g(u) \leq g \tcirc  \kappa (s)$,\, for all $s>\kappa^{-1}(u)$.
\end{lemma}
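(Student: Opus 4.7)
\medskip

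\noindent\textbf{Proof sketch (plan).} The two identities at the boundary points of $[\kappa^{-1}(u-),\kappa^{-1}(u)]$ are immediate from part (ii) of Definition~\ref{def_tilde_compose}: on that interval $g\tcirc\kappa$ is by construction the line segment joining $(\kappa^{-1}(u-),g(u-))$ to $(\kappa^{-1}(u),g(u))$, so evaluating at the two endpoints gives exactly $g(u-)$ and $g(u)$. Condition~\eqref{H1} guarantees $\kappa^{-1}(u-)<\kappa^{-1}(u)$, so the interval is non-degenerate and the segment is well defined.

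For the inequality in (a), fix $s<\kappa^{-1}(u-)$. The plan is a simple dichotomy on where $s$ sits. First, if $s$ does not belong to any of the intervals $[\kappa^{-1}(v-),\kappa^{-1}(v)]$ with $v\in\Jcal(g)$, then $g\tcirc\kappa(s)=g\circ\kappa(s)$. From $s<\kappa^{-1}(u-)=\sup_{v<u}\kappa^{-1}(v)$ one picks $v<u$ with $s<\kappa^{-1}(v)$; by \eqref{equ_equ_h_inv_larger_s} applied to $\kappa$, this forces $\kappa(s)\le v<u$, and monotonicity of $g$ gives $g\circ\kappa(s)\le g(v)\le g(u-)$. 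Second, if $s$ lies in some $[\kappa^{-1}(v-),\kappa^{-1}(v)]$ with $v\in\Jcal(g)$, then necessarily $v<u$: indeed, for $v\ge u$ monotonicity of $\kappa^{-1}$ yields $\kappa^{-1}(v-)\ge\kappa^{-1}(u-)$, contradicting $s<\kappa^{-1}(u-)$ (one checks the case $v=u$ separately using $s<\kappa^{-1}(u-)\le\kappa^{-1}(u)$). By Definition~\ref{def_tilde_compose}(ii), $g\tcirc\kappa(s)\in[g(v-),g(v)]$, and since $v<u$ we have $g(v)\le g(u-)$.

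Part (b) is entirely analogous, with inequalities reversed. Fix $s>\kappa^{-1}(u)$. If $s$ is in the good set, $g\tcirc\kappa(s)=g\circ\kappa(s)$; since $s>\kappa^{-1}(u)=\inf\{r:\kappa(r)>u\}$, monotonicity of $\kappa$ yields $\kappa(s)>u$, and then by monotonicity and right-continuity of $g$ one gets $g(\kappa(s))\ge g(u+)=g(u)$. If instead $s\in[\kappa^{-1}(v-),\kappa^{-1}(v)]$ for some $v\in\Jcal(g)$, then $s>\kappa^{-1}(u)$ together with monotonicity of $\kappa^{-1}$ forces $v>u$, hence $g(v-)\ge g(u+)=g(u)$, and Definition~\ref{def_tilde_compose}(ii) gives $g\tcirc\kappa(s)\ge g(v-)\ge g(u)$.

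The only minor obstacle is the careful bookkeeping of generalized inverses, in particular deducing $\kappa(s)<u$ in (a) and $\kappa(s)>u$ in (b) from inequalities involving $\kappa^{-1}$; this is where \eqref{equ_h_larger_u}--\eqref{equ_equ_h_inv_larger_s} and monotonicity are used. No further properties beyond Definition~\ref{def_tilde_compose} and the standing assumption $g,\kappa\in\Dfu$ are needed; \eqref{H2} plays no role here, as it is only necessary to make the definition of $g\tcirc\kappa$ consistent at accumulation points of jumps, not for this monotonicity-at-the-boundary statement.
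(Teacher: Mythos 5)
Your proof is correct and takes essentially the same route as the paper's: the same dichotomy between ``good points'' and interpolation intervals, with the boundary equalities read off Definition~\ref{def_tilde_compose}(ii) and the inequalities obtained from monotonicity of $g$, $\kappa$ and $\kappa^{-1}$ exactly as in the paper. (Only your closing aside is slightly imprecise: \eqref{H2} is what makes the value of $g\tcirc\kappa$ consistent when consecutive interpolation intervals share an endpoint, cf.\ Remark~\ref{rem:consecjumps}, but you are right that it is not used in the monotonicity argument itself.)
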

In the argument below we will use several times and without explicit mention the fact that $g$ (and $\kappa$) is ndrcll.
\begin{proof}
\noindent
(a) 
If $s < \kappa^{-1}(u-)$ then necessarily $\kappa(s) < u$ so that $g\circ \kappa(s) \leq g(u-)$, and this implies the claim provided $s$ is a ``good point'' from Definition \ref{def_tilde_compose} (i).
Otherwise, it must be $s \in [\kappa^{-1}(v-),\kappa^{-1}(v)]$ for some $v<u$ and then $g \tcirc  \kappa(s) \leq g(v)\leq g(u-)$.

\noindent
(b) If $s>\kappa^{-1}(u)$, then necessarily $\kappa(s)> u$ so that $g\circ \kappa(s) \geq g(u)$, and this again implies the claim provided $s$ is a ``good point''. 
Otherwise, it must be $s \in [\kappa^{-1}(v-),\kappa^{-1}(v)]$ for some $v>u$ and then $g \tcirc  \kappa(s) \geq g(v-)\geq g(u)$.
\end{proof}

The above partial monotonicity result can be easily improved as follows. 
\color{black}

\begin{lemma} 
  \label{lem_tilde_circ_ndrcll}
  Let $g,\kappa$ be as in Lemma \ref{lem_first_property_of_tilde_circ}.
  Then $\gamma=g \tcirc \kappa$ is ndrcll.
\end{lemma}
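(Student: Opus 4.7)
The plan is to decompose the domain as the disjoint union of the closed ``linear'' intervals $I_u := [\kappa^{-1}(u-), \kappa^{-1}(u)]$, indexed by $u \in \Jcal(g)$, together with the complementary ``good set'' $G$ from Definition~\ref{def_tilde_compose}~(i), and then to verify the ndrcll property piecewise and at the boundaries between these pieces. On each $I_u$ the function $\gamma := g \tcirc \kappa$ is affine with strictly positive slope by Remark~\ref{rem:positiveslopeetc}~(a), hence continuous and strictly increasing on $I_u$, with endpoint values $\gamma(\kappa^{-1}(u-)) = g(u-)$ and $\gamma(\kappa^{-1}(u)) = g(u)$. On $G$, $\gamma = g \circ \kappa$ is a composition of two ndrcll functions; non-decreasingness is immediate, while right-continuity and existence of left limits at each $s \in G$ follow by pushing the corresponding properties of $\kappa$ at $s$ through $g$, using monotonicity and right-continuity of $g$.

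Next I would glue the pieces at their common boundaries. Lemma~\ref{lem_first_property_of_tilde_circ} provides exactly the inequalities $\gamma(s) \leq g(u-)$ for $s < \kappa^{-1}(u-)$ and $\gamma(s) \geq g(u)$ for $s > \kappa^{-1}(u)$, which combined with the explicit endpoint values of $\gamma$ on $I_u$ yield monotonicity across each boundary, the left limit $\gamma(\kappa^{-1}(u-)-) = g(u-)$, and right-continuity at $\kappa^{-1}(u)$. For the right-continuity step, the only mildly subtle case is when $\kappa$ jumps at $s^* := \kappa^{-1}(u)$ from some value $\leq u$ to some $v := \kappa(s^*) > u$ with $v \notin \Jcal(g)$; applying \eqref{H2} at $s^*$ gives $g(\kappa(s^*-)) = g(v-) = g(v)$, which sandwiches $g(u)$ between equal quantities and forces $g(u) = g(v)$, and then continuity of $g \circ \kappa$ from the right through $G$ yields $\gamma(s^*+) = g(v) = g(u) = \gamma(s^*)$.

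The main obstacle in this scheme is the degenerate case where a single point $s$ is simultaneously $\kappa^{-1}(u)$ and $\kappa^{-1}(u'-)$ for two distinct jumps $u < u'$ of $g$, so that two linear pieces $I_u$ and $I_{u'}$ abut at $s$. In this situation $\kappa$ itself jumps at $s$ with $\kappa(s-) \leq u < u' \leq \kappa(s)$, and the two candidate values of $\gamma(s)$ are $g(u)$ (from $I_u$) and $g(u'-)$ (from $I_{u'}$). Consistency of the definition requires $g(u) = g(u'-)$, which is precisely what \eqref{H2} guarantees: the identity $g(\kappa(s-)) = g(\kappa(s)-)$ forces $g$ to be constant on $[\kappa(s-), \kappa(s))$, hence on $[u, u') \subseteq [\kappa(s-), \kappa(s))$, giving $g(u) = g(u'-)$. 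This is the unique role of \eqref{H2} in the proof, and it is exactly the consistency condition encoded in the definition of a compatible pair.
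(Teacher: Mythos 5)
Your overall architecture (piecewise analysis on the closed intervals $I_u=[\kappa^{-1}(u-),\kappa^{-1}(u)]$ and on the good set, monotone gluing via Lemma \ref{lem_first_property_of_tilde_circ}, and the use of \eqref{H2} to reconcile two abutting linear pieces at a point $\kappa^{-1}(u)=\kappa^{-1}(u'-)$) matches the paper's proof, and your monotonicity argument is fine. The genuine gap is in the right-continuity step at the right endpoints $s^*=\kappa^{-1}(u)$, $u\in\Jcal(g)$. Lemma \ref{lem_first_property_of_tilde_circ}(b) only gives the lower bound $\gamma(s)\ge g(u)=\gamma(s^*)$ for $s>s^*$; it provides no upper control, so "right-continuity at $\kappa^{-1}(u)$" does not follow from it together with the endpoint values. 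Your fallback, "continuity of $g\circ\kappa$ from the right through $G$", presupposes that good points accumulate at $s^*$ from the right, and this can fail: for a pure-jump $g$ such as $g_e$ in \eqref{D:gtwo} the good set is just $\{0\}$, and a right neighbourhood of $s^*$ can be entirely covered by intervals $I_{u''}$ with $u''\downarrow u$, on which $\gamma$ is given by the spline, not by $g\circ\kappa$ at good points. The same issue recurs in your "mildly subtle case" ($\kappa$ jumping at $s^*$ to $v\notin\Jcal(g)$), where you again invoke approach through $G$.

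The missing ingredient is the two-sided bound of Lemma \ref{lem:circ_tilde_circ_compare}, namely $g(\kappa(s)-)\le\gamma(s)\le g(\kappa(s))$ for \emph{every} $s$, which holds on the spline intervals as well. With it, for $s\downarrow s^*$ one gets $\gamma(s^*)\le\gamma(s)\le g(\kappa(s))$, and right-continuity of $\kappa$ and of $g$ (together with your \eqref{H2} identification $g(u)=g(\kappa(s^*)-)$ when $\kappa$ jumps at $s^*$, and \eqref{H1} to rule out the remaining pathologies) forces $\gamma(s^*+)=\gamma(s^*)$. This is precisely how the paper argues: it selects $u_n\downarrow u$ in the image of $\kappa$, points $s_n\in\kappa^{-1}(\{u_n\})$, and sandwiches $\gamma(s_n)$ between $\gamma(s^*)$ and $g(u_n)$, splitting into the two cases according to whether $\kappa^{-1}(\{u\})$ is closed or half-open. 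Your proposal identifies all the right structural cases and the correct role of \eqref{H1}--\eqref{H2}, but as written the upper estimate from the right is unsupported; inserting Lemma \ref{lem:circ_tilde_circ_compare} (or the paper's sequence argument) closes it.
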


\begin{proof}
Since $g$ and $\kappa$ are elements of $\Dfu$, the same is true for $g \circ \kappa$.
To verify that $\gamma=g \tcirc \kappa$ is monotone non-decreasing, we use Lemmas \ref{lem:circ_tilde_circ_compare}, \ref{lem_first_property_of_tilde_circ}
when comparing the values of $\gamma(s_1)$ and $\gamma(s_2)$ if at least one, $s_1$ or $s_2$, is not a ``good point'' from Definition \ref{def_tilde_compose} (i), and we use the monotonoicity of $g\circ\kappa$ when 
both $s_1$ and $s_2$ are ``good points''. One can also try to derive this using Remark \ref{rem_interpolation_formula_for_tilda_circ} and a comparison argument.
Monotonicity implies that $\gamma$ has both limits from the right and from the left at every point.

We will next show the right continuity of $\gamma$. By the construction
of $\gamma$, it is trivial that $\gamma$ is right-continuous on
every interval $[\kappa^{-1}(u-),\kappa^{-1}(u))$, $u\ge0$ (see also Remark~\ref{rem_interpolation_formula_for_tilda_circ}). It remains to show that $\gamma$ is right continuous on $A:=\left(\bigcup_{u\ge0}[\kappa^{-1}(u-),\kappa^{-1}(u))\right)^{c}$. 

Let us fix $s\in A$. Then there exists $u\ge 0$ such that $s=\sup\kappa^{-1}(\{u\})$,
if and only if $s$ is the right limit of a positive length interval $\left[\kappa^{-1}(u-),\kappa^{-1}(u)\right)$,
 or there exists $ u'\ge 0$ such that $\{s\}  = \kappa^{-1}(\{u'\})$. If both happen, 
then $\kappa$ jumps at $s$ from $u= \kappa(s-)$ to $u'=\kappa(s)$, and  $s$ is the right end point of
$\left[\kappa^{-1}(u-),\kappa^{-1}(u)\right)$.

There are two cases to consider. In the first case we have  $\kappa^{-1}(\{u\})=\left[\kappa^{-1}(u-),\kappa^{-1}(u)\right]$, which includes the situation where
$\kappa^{-1}(\{u\}) = \{s\}$ is a one-point set. The right continuity of $\kappa$ at $s=\kappa^{-1}(u)$ implies existence of a sequence 
\begin{equation}
\{u_{n},n\ge1\}\subset\mathrm{Im}\text{\ensuremath{\kappa}}\ \mbox{such that}\ u_{n}\downarrow u.\label{eq:assumption_on_st_inc}
\end{equation}
In particular, for each $n$ we may choose $s_{n}\in\kappa^{-1}(\{u_{n}\})$. 
Note that $(s_{n})_{n\ge1}\subset (s,\infty)$ is  strictly decreasing. Recalling the definition of $\gamma$ and its monotonicity, together with  Lemma \ref{lem:circ_tilde_circ_compare} and the right continuity of $g$, we have
\[
g(u)=\gamma(s)\le\lim_{n\to\infty}\gamma(s_{n})=\lim_{n\to\infty}g\tilde{\circ}\kappa(s_{n})\le\lim_{n\to\infty}g(u_{n})=g(u).
\]
Due to the already established monotonicity of $\gamma$,
the equality
$\gamma(s)=\lim_{n\to\infty}\gamma(s_{n})$
is sufficient for concluding that $\gamma$ is right continuous  at $s$.

In the second case we have $\kappa^{-1}(\{u\})=\left[\kappa^{-1}(u-),\kappa^{-1}(u)\right)$.  In particular, 
$s= \kappa^{-1}(u)$ but
$\kappa(s)= u'>u=\kappa(s-)$, so  \eqref{eq:assumption_on_st_inc} is false. 
Recalling the definition of $\gamma$ and the assumption~\eqref{H2}, we have
\[
\gamma(s)=g(u)=g(\kappa(s-))=g(\kappa(s)-)=g(u'-).
\]
Thus, if $\left[\kappa^{-1}(u'-),\kappa^{-1}(u')\right)$
has positive length, we obtain the (right-)continuity of $\gamma$ at $s$ from the reasoning of Remark \ref{rem:consecjumps}. Otherwise, $\kappa^{-1}(\{u'\})$
is a single point. This implies (due to~\eqref{H1}) that 
$g(u'-)=g(u')$ and moreover that
$\gamma(s)=g\circ \kappa(s) = g(u')=g(u'-)$.
In this case we can again use \eqref{eq:assumption_on_st_inc}
with $u$ replaced by $u'$, and the subsequent reasoning to obtain the right continuity
of $\gamma$ at $s$.
\end{proof}

\begin{lemma}
  \label{lem_continuity_of_tilde_circ}
  Let $g,\kappa$ be as in Lemmas \ref{lem_first_property_of_tilde_circ} and \ref{lem_tilde_circ_ndrcll}.
  Then $\gamma=g \tcirc \kappa \in \Dfu$ is a continuous function.
\end{lemma}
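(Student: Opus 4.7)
The approach is to first verify the easy $\Dfu$-requirements on $\gamma$ and then argue that $\gamma$ has no jumps; together with Lemma~\ref{lem_tilde_circ_ndrcll} this yields continuity. Since $0$ lies in no interpolation interval $I_u := [\kappa^{-1}(u-),\kappa^{-1}(u)]$ (for $u \in \Jcal(g)$, such an $I_u$ cannot contain $0$ because $\kappa(0)=0$ and $g(0)=0$), we have $\gamma(0) = g(\kappa(0)) = 0$. Strict positivity of $\gamma$ on $(0,\infty)$ follows from $\gamma(r)\geq g(\kappa(r)-)$ (Lemma~\ref{lem:circ_tilde_circ_compare}) together with $\kappa(r)>0$ and $g(v)>0$ for $v>0$. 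Unboundedness of $\gamma$ follows from $\gamma(\kappa^{-1}(u))=g(u)$ at each $u\in\Jcal(g)$ (by construction of $\tcirc$) and $g\circ\kappa\to\infty$.

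The heart of the matter is left-continuity of $\gamma$ at every $s>0$. I would partition the cases by the position of $s$ relative to the $I_u$'s. If $s$ lies in the interior of some $I_u$, $\gamma$ is affine there. If $s = \kappa^{-1}(u)$, then $I_u$ is non-degenerate by \eqref{H1} and $\gamma$ is affine on $I_u$ with right-endpoint value $g(u) = \gamma(s)$, which gives left-continuity immediately. The remaining cases are $s$ being a ``good'' point (in no $I_u$) or $s = \kappa^{-1}(u-)$ for some $u \in \Jcal(g)$.

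These two cases are handled using the sandwich $g(\kappa(r)-) \leq \gamma(r) \leq g(\kappa(r))$ from Lemma~\ref{lem:circ_tilde_circ_compare}. For a good $s$, a short contradiction argument based on \eqref{H1} (in the spirit of the proof of Lemma~\ref{lem_tilde_circ_ndrcll}) shows that $g$ has no jump at either $\kappa(s-)$ or $\kappa(s)$; then \eqref{H2} gives $g(\kappa(s-))=g(\kappa(s)-)=g(\kappa(s))$, so letting $r\uparrow s$ in the sandwich yields $\gamma(s-)=g(\kappa(s))=\gamma(s)$. For $s=\kappa^{-1}(u-)$ (where $\gamma(s)=g(u-)$), I would split on whether $\kappa(s-)=u$ or $\kappa(s-)<u$. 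In the former, the sandwich directly gives $\gamma(s-)=g(u-)$. In the latter, an argument using \eqref{H2} (as in Lemma~\ref{lem_tilde_circ_ndrcll}) forces $\kappa(s)=u$; then either $\kappa(s-)\notin\Jcal(g)$ and the sandwich again suffices, or $\kappa(s-)\in\Jcal(g)$ and $s$ is simultaneously the right endpoint of $I_{\kappa(s-)}$, so that the already handled affine-interpolation case delivers $\gamma(s-)=g(\kappa(s-))=g(u-)$ via \eqref{H2}.

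The main obstacle is precisely this last bookkeeping at ``overlapping'' points where $\kappa$ leaps across a jump of $g$: both \eqref{H1} and \eqref{H2} are essential, and one must carefully check compatibility of the various one-sided limits. The tool-set — Lemma~\ref{lem:circ_tilde_circ_compare} plus \eqref{H1}-\eqref{H2} — is however exactly tailored to this, and the analysis closely mirrors (and refines) the right-continuity argument of Lemma~\ref{lem_tilde_circ_ndrcll}.
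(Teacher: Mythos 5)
Your proposal is correct and follows essentially the same route as the paper's proof: verify the $\Dfu$ properties via Lemma \ref{lem:circ_tilde_circ_compare} and Lemma \ref{lem_tilde_circ_ndrcll}, then establish left-continuity by a case analysis on the position of $s$ relative to the interpolation intervals $[\kappa^{-1}(u-),\kappa^{-1}(u)]$, using the sandwich inequality together with \eqref{H1}--\eqref{H2} and reducing the delicate ``overlap'' case (where $\kappa$ leaps across a jump of $g$) to the already-settled affine case. The paper merely organizes the cases slightly differently (via the level set $\kappa^{-1}(\{\kappa(s)\})$ and a sequence $s_n\uparrow s$), but the tools and the structure of the argument coincide with yours.
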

\begin{proof}
Both $g(0)=0$ and $\kappa(0)=0$ so $g \circ \kappa(0) =g \tcirc  \kappa(0) =\gamma(0) = 0$. Lemma \ref{lem:circ_tilde_circ_compare} directly implies that $g\tcirc \kappa$ is both unbounded and strictly increasing at $0$.
Lemma \ref{lem_tilde_circ_ndrcll} gives monotonicity. Therefore $\gamma\in \Dfu$. 
\color{black}

We next fix $s>0$ and show that $\gamma$ is left-continuous at $s$.
Set $u:=\kappa(s)$ and note that if $s$ belongs to the interior
of $\kappa^{-1}(\{u\})$, which  equals $(\kappa^{-1}(u-),\kappa^{-1}(u))$ if non-empty,
then the left-continuity of $\gamma$ at $s$  follows directly from the construction
of $\tcirc$. 
Moreover, if $(\kappa^{-1}(u-),\kappa^{-1}(u))$ is non-empty then $\gamma$ is left-continuous at $s=\kappa^{-1}(u)$ due to the same observation.

We now assume that $s=\kappa^{-1}(u-)\leq \kappa^{-1}(u)$ and consider a sequence $(s_{n})_n$ of positive real numbers, which strictly increases to $s$.
Our goal is to show that $\lim_n \gamma(s_{n})=\gamma(s)$, as this  together with the monontonicity of $\gamma$ will imply left-continuity at $s$. We set $u_{n}:=\kappa(s_{n})$, $n\ge1$, and $\bar{u}:=\lim_{n\to\infty}u_{n}$.
Then note that $\kappa(s-)=\bar{u}\leq u=\kappa(s)$
and $\gamma(s)=g(u-)$ for our choice of $s$. Using the definition and monotonicity of $\gamma$ we have
\[
g(u_{n}-)\le\gamma(s_{n})\le\gamma(s).
\]
Passing to the limit as $n\to\infty$  we obtain
\begin{equation}
\label{E:ggammagamma}
g(\bar{u}-)\le\gamma(s-)\le\gamma(s)=g(u-).
\end{equation}
If $\bar{u}=u$ then clearly $\gamma(s)=\gamma(s-)=g(u-)$.
Otherwise, if $\bar{u}<u$, then~\eqref{H2} implies
\[
g(u-)=g(\kappa(s)-)=g(\kappa(s-))=g(\bar{u}).
\]
Consequently, if $g$ is continuous at $\bar{u}$, then again $\gamma(s-)=\gamma(s)$
due to \eqref{E:ggammagamma}. Otherwise, $g(\bar{u}-)<g(\bar{u})$
and therefore $\kappa^{-1}(\{\bar{u}\})$ has positive length
due to~\eqref{H1}. 
Since here $s= \kappa^{-1}(u-) = \kappa^{-1}(\bar{u})$, we have already covered this case above. This completes the proof of the
lemma.
\end{proof}

\begin{remark}
\label{rem:spline_options}
 The exact expression (formula) for the linear spline on conveniently chosen intervals $[\kappa^{-1}(u-),\kappa^{-1}(u)]$ was not important for the arguments above.
The linear interpolation is the simplest, and it is compatible with taking inverses. However another continuous or differentiable increasing interpolation with compatible boundary values would equally imply an analogue of Lemma \ref{lem_continuity_of_tilde_circ}.
Nevertheless, the following additivity result, necessary in the proof of an important property \eqref{P3} in Lemma \ref{lem_properties_of_kappa_and_gamma}, requires the spline to be linear.
\end{remark}

The following lemma is an easy consequence of our choice of linear spline. We leave the details to the reader.
\begin{lemma}
\label{lem_linearity_of_tilda_circle}
Let $h_1,h_2,\kappa\in \Dfu$ be such that both $(h_1,\kappa)$ and $(h_2,\kappa)$
satisfy \eqref{H1} and \eqref{H2}. Then $(h_1+h_2,\kappa)$ also satisfies \eqref{H1} and \eqref{H2}, and moreover 
\begin{equation*}
    h_1\tcirc  \kappa + h_2\tcirc \kappa = (h_1+ h_2)\tcirc \kappa.
\end{equation*}
\end{lemma}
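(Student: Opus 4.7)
The plan is to prove the lemma in three parts: first verifying that $(h_1+h_2,\kappa)$ satisfies \eqref{H1}, then \eqref{H2}, and finally establishing the additivity identity $h_1\tcirc \kappa + h_2\tcirc \kappa = (h_1+h_2)\tcirc \kappa$.

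For \eqref{H1}, I would use the elementary observation that $\Jcal(h_1+h_2) \subseteq \Jcal(h_1)\cup\Jcal(h_2)$: if neither $h_1$ nor $h_2$ jumps at $u$, then neither does $h_1+h_2$. Thus for any $u\in\Jcal(h_1+h_2)$ we have $u\in\Jcal(h_i)$ for some $i\in\{1,2\}$, whence $\kappa^{-1}(\{u\})$ has positive length by the hypothesis on $(h_i,\kappa)$. For \eqref{H2}, fix $s$ with $\kappa(s-)<\kappa(s)$; the assumptions give $h_i(\kappa(s-))=h_i(\kappa(s)-)$ for $i=1,2$, and summing these identities gives the corresponding identity for $h_1+h_2$.

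The core of the argument is the additivity identity. I would use the explicit formula from Remark \ref{rem_interpolation_formula_for_tilda_circ}: for any $h\in\Dfu$ such that $(h,\kappa)$ satisfies \eqref{H1} and \eqref{H2}, and any $s\in\kappa^{-1}(\{u\})$,
\[
h\tcirc\kappa(s) \;=\; h(u-) \;+\; A(h,u)\,\bigl(s-\kappa^{-1}(u-)\bigr),
\]
where $A(h,u):=\bigl(h(u)-h(u-)\bigr)/\bigl(\kappa^{-1}(u)-\kappa^{-1}(u-)\bigr)$ when $u\in\Jcal(h)$ (the denominator being positive by \eqref{H1}) and $A(h,u):=0$ otherwise. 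Since the family $\{\kappa^{-1}(\{u\}):u\ge0\}$ partitions $\R_+$, it suffices to check the identity on each member of the partition. Both the constant piece $h(u-)$ and the slope $A(h,u)$ are additive in $h$, so summing the explicit expressions for $h_1\tcirc\kappa(s)$ and $h_2\tcirc\kappa(s)$ on $\kappa^{-1}(\{u\})$ yields the expression for $(h_1+h_2)\tcirc\kappa(s)$.

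The only real obstacle is bookkeeping in the mixed case $u\in\Jcal(h_1)\triangle\Jcal(h_2)$, where one has to reconcile the ``good set'' clause (Definition \ref{def_tilde_compose}(i)) for one function with the linear spline clause (Definition \ref{def_tilde_compose}(ii)) for the other. For example, if $u\in\Jcal(h_1)\setminus\Jcal(h_2)$, then $A(h_2,u)=0$ while $h_2(u-)=h_2(u)$, and $u\in\Jcal(h_1+h_2)$ with $A(h_1+h_2,u)=A(h_1,u)$, so additivity of the two ingredients is preserved. The uniform formula from Remark \ref{rem_interpolation_formula_for_tilda_circ} makes this bookkeeping essentially trivial, and this is why the proof rests entirely on the linearity of the interpolation chosen in Definition \ref{def_tilde_compose}.
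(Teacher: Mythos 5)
Your proof is correct and takes essentially the route the paper intends: the paper omits the argument, remarking only that the lemma is an easy consequence of the linearity of the spline and leaving the details to the reader. Your verification of \eqref{H1} via $\Jcal(h_1+h_2)\subseteq\Jcal(h_1)\cup\Jcal(h_2)$, of \eqref{H2} by summing, and of the additivity through the interpolation formula of Remark \ref{rem_interpolation_formula_for_tilda_circ} (with the mixed-jump case handled by the zero-slope convention) supplies exactly those details.
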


\subsection{Solving for \texorpdfstring{$\bT(y)$}{T(y)} -- the general case}
\label{sub:construction_of_a__special_curve}
Inspired by the  analysis of the previous two sections, we now derive the general expression for the minimal solution to  \eqref{equ_equation_for_t} in terms of a solution to a $1$-dimensional optimization problem.
It turns out that $\circ$ can be replaced with  $\tcirc $ in \eqref{E:T_via_s_sc}, but arguing this rigorously is not as simple as one might guess.

\begin{lemma} 
  \label{lem_construction_of_a_curve_gamma}
  Let $g_i \in \Dfu$, $i \in [m]$, and $f,\kappa$ be defined by~\eqref{equ_f_kappa} using inverses. Then  for each $i \in [m]$ both~\eqref{H1} and~\eqref{H2} are satisfied for $(g_i^{-1},\kappa)$.
\end{lemma}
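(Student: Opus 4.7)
The plan is to verify the two compatibility conditions \eqref{H1} and \eqref{H2} separately, using the duality between jumps of $f$ and plateaus of $\kappa = f^{-1}$, and vice versa, along with the additive structure $f = \sum_{j=1}^m g_j^{-1}$. The key observation I will rely on is that a sum of non-negative non-decreasing functions is constant on an interval if and only if each summand is constant on that interval.

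For \eqref{H1}, I would start by fixing $i \in [m]$ and $u \in \Jcal(g_i^{-1})$, so that $g_i^{-1}(u-) < g_i^{-1}(u)$. Since $g_j^{-1}$ is non-decreasing for every $j$, summing gives $f(u-) \leq \sum_{j\neq i} g_j^{-1}(u-) + g_i^{-1}(u-) < \sum_{j\neq i} g_j^{-1}(u) + g_i^{-1}(u) = f(u)$, so $f$ also has a jump at $u$ whose size is at least the size of the jump of $g_i^{-1}$. I then need to check that $\kappa(r) = u$ for every $r$ in the positive-length interval $[f(u-), f(u))$. Using the generalized inverse formula $\kappa(r) = \inf\{v > 0 : f(v) > r\}$ and the monotonicity and left limits of $f$: for $v < u$ we have $f(v) \leq f(u-) \leq r$, so no such $v$ lies in the set; for $v = u$ we have $f(u) > r$, so $\kappa(r) \leq u$. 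Combining, $\kappa(r) = u$ for all $r \in [f(u-), f(u))$, which is exactly the positive-length subset of $\kappa^{-1}(\{u\})$ demanded by \eqref{H1}.

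For \eqref{H2}, I would fix $s$ with $\kappa(s-) < \kappa(s)$ and set $a := \kappa(s-)$, $b := \kappa(s)$. The aim is to show that $f$ is constant on $[a,b)$ with common value $s$. Using $\kappa = f^{-1}$ together with inequalities \eqref{equ_h_larger_u}--\eqref{equ_equ_h_inv_larger_s} applied to $h = f$: for any $v \in [a, b)$ one has $\kappa(f(v)) \leq v < b = \kappa(s)$, forcing $f(v) < s$ by strict monotonicity of $\kappa$ in the direction $s$; similarly $f(v) \geq s$ follows from $v \geq a = \kappa(s-)$ by a left-limit argument. Actually the cleanest way is: $\kappa$ jumping at $s$ forces $f$ to be constant equal to some value $\leq s$ on $(a, b)$, and by right-continuity on $[a,b)$. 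Since $f = \sum_{j=1}^m g_j^{-1}$ is a sum of non-decreasing functions that is constant on $[a,b)$, each summand $g_j^{-1}$ is constant on $[a,b)$. In particular $g_i^{-1}$ is constant on $[a,b)$, so $g_i^{-1}(a) = \lim_{v\uparrow b} g_i^{-1}(v) = g_i^{-1}(b-)$, which is exactly $g_i^{-1}(\kappa(s-)) = g_i^{-1}(\kappa(s)-)$.

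I expect the main technical obstacle to be the careful bookkeeping around the boundary value at $b = \kappa(s)$ in step two, specifically verifying that $f$ is indeed constant on the half-open interval $[a, b)$ (rather than just on the open interior), and handling the fact that $f$ itself may jump at $b$. This is where I would use the right-continuity of $f$ at $a$ together with the definition $b = \inf\{v : f(v) > s\}$ to pin down the constant value of $f$ on $[a,b)$ and deduce the constancy of each $g_j^{-1}$ on the same half-open interval, which is what is needed for the left-limit identity in \eqref{H2}.
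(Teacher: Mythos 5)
Your proposal is correct and takes essentially the same route as the paper: for \eqref{H1} you note that a jump of $g_i^{-1}$ at $u$ forces a jump of $f=\sum_j g_j^{-1}$ at $u$, so $\kappa\equiv u$ on the positive-length interval $[f(u-),f(u))$, and for \eqref{H2} you use that $f=\kappa^{-1}$ is constant on $[\kappa(s-),\kappa(s))$, hence each non-decreasing summand $g_j^{-1}$ is constant there, giving $g_i^{-1}(\kappa(s-))=g_i^{-1}(\kappa(s)-)$. The only blemish is the intermediate step you yourself discard (the inequality $\kappa(f(v))\le v$ is backwards, since in fact $\kappa(f(v))\ge v$, and ``$f(v)<s$'' should read ``$f(v)\le s$''), but the argument you finally settle on is exactly the paper's.
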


\begin{proof} 
 Fix some $i\in [m]$.
 Let us check \eqref{H1}.
 Suppose that $g^{-1}_i(u-)<g^{-1}_i(u)$ for some $u>0$. 
 Then in particular $f(u-)<f(u)$. 
 Since $\kappa$ is the (generalized right-continuous) inverse of $f$, it is also true 
that $f=\kappa^{-1}$ and in particular that
  \begin{equation} 
  \label{equ_property_of_f_and_kappa}
  \kappa(s)=u, \quad \mbox{for all}\ s \in [f(u-),f(u))=[\kappa^{-1}(u-),\kappa^{-1}(u)).
  \end{equation}

Let us now check \eqref{H2}.
Suppose that $\kappa(s-)<\kappa(s)$ for some $s>0$. Then it must be (see also Lemma~\ref{lem_properties_of_right_inverse}~iii)) that $f(u)=s$ for all $u \in [\kappa(s-),\kappa(s))=[f^{-1}(s-),f^{-1}(s))$. 
Recalling that 
$g_i^{-1}$, $i \in [m]$, are all non-decreasing, their sum being constant on any interval 
implies that each of them is constant on the same interval. So $g_i^{-1}(u)= g_i^{-1}(f^{-1}(s-))= g_i^{-1}(\kappa(s-))$ for all $u\in [\kappa(s-),\kappa(s))$, and this is equivalent to \eqref{H2} for $(g_i^{-1},\kappa)$.
\end{proof}

 Recall that all the {inverses are} considered to be {generalized} right-continuous inverses. Let $g_i \in \Dfu$, $i \in [m]$, and let $\kappa$ be defined by~\eqref{equ_f_kappa}.
Furthermore, define
\begin{equation} 
  \label{equ_definition_of_gamma_i}
  \gamma_i:=g^{-1}_i \tcirc \kappa,
\end{equation}
and $s\mapsto \vec\gamma(s)$ by
\begin{equation}
\label{D:gamma}
\vec\gamma(s):=(\gamma_1(s),\ldots,\gamma_m(s)), \quad s\geq 0.
\end{equation}
  Note that the next two results are stated and proved in greater generality, although we will apply them only in the setting where $g_i$ are given by \eqref{equ_function_g}.
\begin{lemma}
  \label{lem_properties_of_kappa_and_gamma}
  \begin{enumerate}
    \item [(\namedlabel{P1}{P1})] The curve $\vec\gamma$ is a continuous curve in $\R^m $,

    \item [(\namedlabel{P2}{P2})] for each $i$,  $\gamma_i$ is non-decreasing,

  \item [(\namedlabel{P3}{P3})] 
  $s \mapsto \|\vec\gamma(s)\|_1$ is the identity map on $\R_+$,

\item [(\namedlabel{P4}{P4})] for each $s\geq 0$
\[
  \kappa(s) \in \bigcap_{ i=1 }^{ m } \left[g_i(\gamma_i(s)-),g_i(\gamma_i(s))\right].
\]
In particular,  if $s\geq 0$ is such that $g_i$ is continuous at $\gamma_i(s)$ for each $i \in [m]$,
then 
$$
g_i(\gamma_i(s))=g_{i+1}(\gamma_{i+1}(s))(=\kappa(s)), \ \forall i \in [m-1].
$$
\end{enumerate}
\end{lemma}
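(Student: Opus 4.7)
The plan is to deduce \eqref{P1}--\eqref{P3} directly from the machinery developed in Section \ref{S:non_decreasing_inver_comp}, and to prove \eqref{P4} by inverting the two-sided sandwich of Lemma \ref{lem:circ_tilde_circ_compare} using only the generic relations \eqref{equ_h_larger_u}--\eqref{equ_equ_h_inv_larger_s}. First I would check the standing requirements: since each $g_i\in\Dfu$, the same holds for $g_i^{-1}$ (ndrcll, vanishing at $0$, strictly positive on $(0,\infty)$, unbounded), and hence also for $f=\sum_i g_i^{-1}$ and $\kappa=f^{-1}$. Lemma \ref{lem_construction_of_a_curve_gamma} then shows that each pair $(g_i^{-1},\kappa)$ is compatible, so Definition \ref{def_tilde_compose} produces $\gamma_i=g_i^{-1}\tcirc\kappa$, which is ndrcll by Lemma \ref{lem_tilde_circ_ndrcll} and continuous by Lemma \ref{lem_continuity_of_tilde_circ}; this delivers \eqref{P1} and \eqref{P2}.

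For \eqref{P3}, I would iterate Lemma \ref{lem_linearity_of_tilda_circle} to collapse the sum of smooth compositions into a single one:
\begin{equation*}
\sum_{i=1}^m \gamma_i(s) \;=\; \Big(\sum_{i=1}^m g_i^{-1}\Big)\tcirc\kappa(s) \;=\; f\tcirc f^{-1}(s).
\end{equation*}
The pair $(f,f^{-1})$ trivially satisfies \eqref{H1}--\eqref{H2}, and the identity $f\tcirc f^{-1}=\id$ highlighted in \eqref{equ:inverse} follows by inspection of the two clauses of Definition \ref{def_tilde_compose}: on the good set $f\circ f^{-1}$ is already the identity, while on any interpolation interval $[\kappa^{-1}(u-),\kappa^{-1}(u)]=[f(u-),f(u)]$ associated to a jump $u$ of $f$, the spline is linear between $(f(u-),f(u-))$ and $(f(u),f(u))$, again tracing the diagonal.

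The step I expect to be the main obstacle is \eqref{P4}, where the lack of a clean substitution rule $g_i\circ g_i^{-1}=\id$ prevents a one-line deduction. Starting from Lemma \ref{lem:circ_tilde_circ_compare} applied to the compatible pair $(g_i^{-1},\kappa)$, I would have
\begin{equation*}
g_i^{-1}(\kappa(s)-) \;\le\; \gamma_i(s) \;\le\; g_i^{-1}(\kappa(s)).
\end{equation*}
For the bound $g_i(\gamma_i(s)-)\le\kappa(s)$: any $t<\gamma_i(s)$ satisfies $t<g_i^{-1}(\kappa(s))$, and \eqref{equ_equ_h_inv_larger_s} (applied with $h=g_i$) yields $g_i(t)\le\kappa(s)$; letting $t\uparrow\gamma_i(s)$ concludes. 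For $\kappa(s)\le g_i(\gamma_i(s))$ I would argue by contradiction: if $g_i(\gamma_i(s))<r<\kappa(s)$ for some $r$, then right-continuity of $g_i$ at $\gamma_i(s)$ furnishes $\epsilon>0$ with $g_i(\gamma_i(s)+\epsilon)<r$, so $g_i^{-1}(r)\ge\gamma_i(s)+\epsilon$ by the definition of $g_i^{-1}$; but monotonicity of $g_i^{-1}$ together with $r<\kappa(s)$ force $g_i^{-1}(r)\le g_i^{-1}(\kappa(s)-)\le\gamma_i(s)$, contradicting the previous line. The ``in particular'' clause is then immediate, since continuity of $g_i$ at $\gamma_i(s)$ collapses the bracket $[g_i(\gamma_i(s)-),g_i(\gamma_i(s))]$ to a single value, forcing $g_i(\gamma_i(s))=\kappa(s)$ for every such $i$, and hence equality across $i$.
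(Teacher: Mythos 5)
Your proof is correct. For \eqref{P1}--\eqref{P3} you follow exactly the paper's route: compatibility from Lemma \ref{lem_construction_of_a_curve_gamma}, continuity and monotonicity from Lemmas \ref{lem_tilde_circ_ndrcll} and \ref{lem_continuity_of_tilde_circ}, and the additivity Lemma \ref{lem_linearity_of_tilda_circle} together with $f\tcirc f^{-1}=\id$ (the paper's \eqref{equ:inverse}) for \eqref{P3}. Where you diverge is \eqref{P4}: the paper argues by a case split on whether $\kappa(s)$ is a jump point of $g_i^{-1}$, in the first case reading $\gamma_i(s)=g_i^{-1}(\kappa(s))$ off Definition \ref{def_tilde_compose}(i) and in the second reading the sandwich $g_i^{-1}(\kappa(s)-)\le\gamma_i(s)\le g_i^{-1}(\kappa(s))$ off the linear spline, and then in both cases applying $g_i$ via Lemma \ref{lem_properties_of_right_inverse}~iii). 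You instead invoke Lemma \ref{lem:circ_tilde_circ_compare} to get that same sandwich uniformly in $s$, which eliminates the case analysis, and then establish the two bounds $g_i(\gamma_i(s)-)\le\kappa(s)\le g_i(\gamma_i(s))$ directly from the definition of the generalized inverse: the upper bound via \eqref{equ_equ_h_inv_larger_s} and a left limit, the lower bound via the right-continuity/contradiction argument, which in effect re-derives the content of \eqref{equ_h_smaller_u} and of Lemma \ref{lem_properties_of_right_inverse}~iii) in the form you need. Both arguments rest on the same mechanism (bracketing $\gamma_i(s)$ between the left and right values of $g_i^{-1}$ at $\kappa(s)$ and then transferring this through $g_i$); yours is case-free and a bit more self-contained, while the paper's is shorter because it leans on its toolbox lemma for inverses. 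The final ``in particular'' step is handled identically in both.
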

\begin{proof} 
Properties~\eqref{P1} and~\eqref{P2} are both clearly satisfied due to Lemmas~\ref{lem_tilde_circ_ndrcll},~\ref{lem_continuity_of_tilde_circ} and~\ref{lem_construction_of_a_curve_gamma}.

  Let us show~\eqref{P3}. 
Recalling that $f=\sum_{ i=1 }^{ m } g^{-1}_i$, we have
  \[
    \|\vec\gamma(s)\|_1= \sum_{ i=1 }^{ m } \left(g_i^{-1}\tcirc \kappa\right)(s)=(f \tcirc \kappa)(s) =s, \quad s \geq 0,
  \]
where the second identity is due to Lemma~\ref{lem_linearity_of_tilda_circle}, and the third one is due to $f=\kappa^{-1}$ (see \eqref{equ:inverse} and Remark \ref{rem:spline_options}).

  Let us next verify~\eqref{P4}. Fix some $i \in [m]$ and recall that, due to the construction of $\tcirc$, if 
$\kappa(s)=u$ is not a jump point of $g_i^{-1}$ then 
\[
\gamma_i(s) = g_i^{-1} \circ \kappa(s) =  g_i^{-1} (u) = g_i^{-1} (u-).
\]
Applying $g_i$ to the above identity to get that
$
g_i(\gamma_i(s) )=g_i (g_i^{-1} \circ \kappa(s) ) \geq \kappa(s),
$
and also that (see also Lemma \ref{lem_properties_of_right_inverse} part iii)) 
$
g_i(\gamma_i(s)-) = g_i(g_i^{-1} (u)-)\leq u = \kappa(s).
$

Alternatively, if $\kappa(s)=u$ is a jump point of $g_i^{-1}$,
then $s \in [\kappa^{-1}(u-), \kappa^{-1}(u)]=[f(u-),f(u)]$,
and $\gamma_i= g_i^{-1}\tcirc \kappa$
is specified at $s$ by linearly interpolating 
between
$(f(u-),g_i^{-1}(u-))$ and $(f(u),  g_i^{-1}(u))$. Therefore
$
g_i^{-1}(u-) \leq \gamma_i(s) \leq g_i^{-1}(u),
$
We can now use the monotonicity of $g_i$, together with 
 the RHS (resp.~LHS) inequality to get
 \begin{align*}
&g_i(\gamma_i(s)-) \leq g_i(g_i^{-1}(u) -) \le u=\kappa(s),\\
 \big(\text{resp.~}\ &\kappa(s)=u \le g_i(g_i^{-1}(u-) ) \leq g_i(\gamma_i(s))\big),
\end{align*}
where we used Lemma~\ref{lem_properties_of_right_inverse} iii) for both estimates.
\end{proof}

\begin{proposition}
\label{pro_properties_of_kappa_and_gamma}
Let  $\vec g,f,\kappa,\vec\gamma$ be as in Lemma \ref{lem_properties_of_kappa_and_gamma},
and suppose that we are given some $\vec t=(t_1,\ldots,t_m) \in \R^m _+$ such that $g_i(t_i)=g_{i+1}(t_{i+1})$, $i \in [m-1]$. 
Then 
 $$\vec t=\vec\gamma(\|\vec t\|_1),$$
provided that
\begin{enumerate}
\item [(\namedlabel{P5}{a})] for each $i \in [m]$, $g_i$ is continuous at $t_i$ and strictly increasing from the left at $t_i$, or 
\item [(\namedlabel{P5'}{b})]  for each $i \in [m]$, $g_i$ is strictly increasing from the right at $t_i$.
  \end{enumerate}
\end{proposition}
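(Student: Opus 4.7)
The plan is to set $u := g_1(t_1) = \cdots = g_m(t_m)$ (the common value given by the hypothesis) and $s := \|\vec t\|_1 = \sum_{i=1}^m t_i$, and then to verify that $\gamma_i(s) = t_i$ for every $i\in [m]$. Hypotheses \eqref{P5} and \eqref{P5'} are in a sense dual, so I would treat them by different routes: case \eqref{P5} by an indirect comparison argument using properties \eqref{P3} and \eqref{P4} of Lemma~\ref{lem_properties_of_kappa_and_gamma}, and case \eqref{P5'} by a direct computation via Definition~\ref{def_tilde_compose}.

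For case \eqref{P5}, by \eqref{P3} we have $\sum_i \gamma_i(s) = s = \sum_i t_i$, so if $\gamma_i(s) \neq t_i$ for some $i$, then there must exist both $i_0$ with $\gamma_{i_0}(s) < t_{i_0}$ and $j_0$ with $\gamma_{j_0}(s) > t_{j_0}$. I would then derive a contradiction on $\kappa(s)$: property \eqref{P4} applied to coordinate $i_0$ yields $\kappa(s) \leq g_{i_0}(\gamma_{i_0}(s))$, and strict left-increase of $g_{i_0}$ at $t_{i_0}$ forces $g_{i_0}(\gamma_{i_0}(s)) < g_{i_0}(t_{i_0}) = u$, so $\kappa(s) < u$. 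Applied to coordinate $j_0$, \eqref{P4} gives $\kappa(s) \geq g_{j_0}(\gamma_{j_0}(s)-)$; since every $r \in (t_{j_0}, \gamma_{j_0}(s))$ satisfies $g_{j_0}(r) \geq g_{j_0}(t_{j_0}) = u$ by monotonicity, $g_{j_0}(\gamma_{j_0}(s)-) \geq u$, hence $\kappa(s) \geq u$. The resulting contradiction $\kappa(s) < u \leq \kappa(s)$ forces $\gamma_i(s) = t_i$ for every $i$.

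For case \eqref{P5'}, I would first pin down $\kappa(s)$. Strict right-increase of $g_i$ at $t_i$ gives $g_i^{-1}(u) = t_i$ by Lemma~\ref{lem_properties_of_right_inverse}(i); and right-continuity of $g_i$ combined with the same strict right-increase forces $g_i^{-1}(u+\eta) > t_i$ for every $\eta > 0$. Summing over $i$ shows that $f = \sum_j g_j^{-1}$ is strictly increasing from the right at $u$ with $f(u) = s$, so $\kappa(s) = u$. I would then compute $\gamma_i(s) = g_i^{-1}\tcirc \kappa(s)$ by splitting on whether $u$ is a jump of $g_i^{-1}$: if $u \in \Jcal(g_i^{-1})$, then $s = f(u) = \kappa^{-1}(u)$ is the right endpoint of the interpolation interval $[\kappa^{-1}(u-), \kappa^{-1}(u)]$, and the linear spline of Definition~\ref{def_tilde_compose} returns $g_i^{-1}(u) = t_i$; if $u \notin \Jcal(g_i^{-1})$ and $s$ is not in any other interpolation interval, the good-set formula gives $g_i^{-1}(\kappa(s)) = t_i$. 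The delicate subcase is when $u \notin \Jcal(g_i^{-1})$ yet $s \in [\kappa^{-1}(u'-), \kappa^{-1}(u')]$ for some $u'\in \Jcal(g_i^{-1})\setminus\{u\}$: the case $u' > u$ is ruled out because strict right-increase of $f$ at $u$ implies $\kappa^{-1}(u'-) = f(u'-) > s$, while $u' < u$ forces $f(u') = f(u) = s$, hence $f$ and each $g_j^{-1}$ are constant on $[u', u]$, so $g_i^{-1}(u') = g_i^{-1}(u) = t_i$, and the right-endpoint value of the interpolation is again $t_i$.

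The main obstacle is precisely this final subcase in \eqref{P5'}: the non-injectivity of the right-continuous inverse $f = \kappa^{-1}$ can place $s$ inside an adjacent interpolation interval attached to a different jump of $g_i^{-1}$, so one must check that the value returned by the linear spline does not depend on which such interval is used. The resolution hinges on the observation that this situation can only arise when $f$ is constant across both jumps, which automatically forces $g_i^{-1}$ to be constant there and brings every candidate interpolation value back to $t_i$.
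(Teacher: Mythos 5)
Your proposal is correct. In case \eqref{P5} you follow essentially the paper's own route: the identity \eqref{P3} forces discrepancies of both signs among the coordinates, and \eqref{P4} squeezes $\kappa(s)$ into a contradiction; the only real difference is that the paper picks consecutive indices with $t_i>t_i'$, $t_{i+1}\le t_{i+1}'$ (and therefore also invokes continuity at $t_{i+1}$ through $g_{i+1}(t_{i+1}-)=g_{i+1}(t_{i+1})$), while you work with the common value $u$ and strict discrepancies on both sides, which slightly streamlines that half and in fact makes the continuity hypothesis unnecessary there. In case \eqref{P5'} you genuinely diverge: the paper repeats the same comparison scheme with the dual chain $g_j(t_j)<g_j(t_j'-)\le\kappa(s)\le g_{j+1}(t_{j+1}')\le g_{j+1}(t_{j+1})=g_j(t_j)$ and never opens up the definition of $\tcirc$, whereas you first pin down $\kappa(\|\vec t\|_1)=u$ via Lemma \ref{lem_properties_of_right_inverse}~i) (strict right-increase of each $g_i$ at $t_i$ gives $g_i^{-1}(u)=t_i$ and makes $f$ strictly right-increasing at $u$ with $f(u)=\|\vec t\|_1$) and then evaluate $g_i^{-1}\tcirc\kappa$ directly from Definition \ref{def_tilde_compose}, including the delicate subcase where $\|\vec t\|_1$ lies in an interpolation interval attached to a jump $u'<u$ of $g_i^{-1}$; your resolution there is sound, since $f(u')=f(u)$ forces every $g_j^{-1}$ to be constant on $[u',u]$, which is the same mechanism that underlies \eqref{H2} and Remark \ref{rem:consecjumps}, and the case $u'>u$ is correctly excluded by strict right-increase of $f$ at $u$. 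The paper's treatment of \eqref{P5'} is shorter and symmetric with \eqref{P5}; yours is more computational but yields the sharper byproduct $\kappa(\|\vec t\|_1)=u$ and an explicit verification that the linear spline returns $t_i$ at the relevant endpoint.
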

  \begin{proof}
Let $\vec t\,$ be given as stated. 
Define
$s:=\|\vec t\|_1$ and  $\vec t'=(t_1',\ldots, t_m'):=\vec\gamma(s)$.
Our goal is to prove that $\vec t'=\vec t$ 
if \eqref{P5} or \eqref{P5'} (or both).

Note initially that \eqref{P3} implies that $ \|\vec t'\|_1 = s = \|\vec t\|_1$.
So if $\vec t\neq \vec t'$ then there must exist some (minimal) index $i$ such that both
 $t_i>t_i'$ and $t_{i+1}\leq t_{i+1}'$, otherwise $t_m>t_m'$ and $t_1\leq t_1'$.
Assuming \eqref{P5}, we would get from \eqref{P4} and monotonicity of $g_i$ that 
$$
g_i(t_i)>g_i(t_i')\geq \kappa(s) \geq g_{i+1}(t_{i+1}'-) \geq g_{i+1}(t_{i+1}-)=g_{i+1}(t_{i+1})=g_i(t_i),
$$
a contradiction.
For the same reason as above, $\vec t\neq \vec t'$ implies that there must exist some (minimal) index $j$ such that both
 $t_j<t_j'$ and $t_{j+1}\geq t_{j+1}'$, otherwise $t_m<t_m'$ and $t_1\geq t_1'$.
Assuming \eqref{P5'}, we would get from \eqref{P4} and monotonicity of $g_j$ that 
$$
g_j(t_j)<g_j(t_j'-)\leq \kappa(s) \leq g_{j+1}(t_{j+1}') \leq g_{j+1}(t_{j+1})=g_j(t_j),
$$
which is again impossible.\\
If the the discrepancy is at $m$ and $1$ instead of at $i$ and $i+1$ (resp.~$j$ and $j+1$), we would  analogously arrive to a contradiction in the above argument under assumption (a) (resp.~(b)). 
\end{proof}

Recall that $\bT(y)= \bT^{\vec \rho}(\bbx;y)$ is the minimal solution of \eqref{equ_equation_for_t}.
Equivalently, $\bT(y)$ is the solution of
\eqref{E:important_identity_two} and $(\bT(y),s(y))$ (where
$s(y)$ is defined in \eqref{D:s_of_y}) is the solution of \eqref{E:important_identity_four}.

The following theorem establishes Theorem \ref{thm:gammaExistence}(2).

\begin{theorem} 
  \label{the_t_and_gamma}
  Let $\vec\gamma$ be the continuous curve defined in~\eqref{equ_definition_of_gamma_i},
where $g_i$, $i \in [m]$, are given by~\eqref{equ_function_g}. Then 
  \[
    \vec\gamma(\|\bT(y)\|_1)=\bT(y), \quad \forall y\geq 0\qquad \text{s.t. } \bT(y)\in\R_+^m.
  \]
\end{theorem}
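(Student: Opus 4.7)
The plan is to apply Proposition \ref{pro_properties_of_kappa_and_gamma} to $\vec t = \bT(y)$ whenever the hypotheses can be verified directly, and then extend to the remaining $y$ by a density and left-continuity argument, using that $\vec\gamma$ is continuous (property \eqref{P1}) and $\bT$ is left-continuous (Lemma \ref{lem_T_is_lcrl}).

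First I would derive the key algebraic identity that $g_i(T_i(y)-)$ does not depend on $i$. By Lemma \ref{lem_minimal_solution_via_underline_x}(i), $\bT(y)$ solves $\underline \bbx(\vec t-) = -\vec\rho y$, which together with \eqref{equ_definition_of_underline_x_i}, the continuity of $\underline x_{i,i}$, and the proportionality \eqref{E:offdiagproportion} (i.e. $x_{i,j}(t)=\rho_i x_{*,j}(t)$ for $i\neq j$) yields
\begin{equation*}
\frac{\underline x_{i,i}(T_i(y))}{\rho_i} + \sum_{j\neq i} x_{*,j}(T_j(y)-) = -y, \qquad \forall i\in[m].
\end{equation*}
Recalling $g_i(t) = x_{*,i}(t) - \underline x_{i,i}(t)/\rho_i$ and using again that $\underline x_{i,i}$ is continuous, this rearranges to
\begin{equation*}
g_i(T_i(y)-) = \sum_{j=1}^m x_{*,j}(T_j(y)-) + y,
\end{equation*}
a quantity independent of $i$. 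Thus $g_i(T_i(y)-) = g_k(T_k(y)-)$ for all $i,k \in [m]$. In addition, Lemma \ref{lem_minimal_solution_via_underline_x}(ii) together with monotonicity of $x_{*,i}$ gives $g_i(t) < g_i(T_i(y))$ for every $t<T_i(y)$, i.e. each $g_i$ is strictly increasing from the left at $T_i(y)$.

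Next I would call $y$ \emph{regular} if $T_i(y) \notin \Jcal(x_{*,i})$ for every $i\in[m]$. Since each $y\mapsto T_i(y)$ is strictly increasing (Lemma \ref{lem_T_is_lcrl}) and each $x_{*,i}$ has at most countably many jump points, the set of non-regular $y$ is at most countable, so regular $y$'s are dense in $\R_+$. For a regular $y$ with $\bT(y)\in\R^m_+$, the continuity of $x_{*,i}$ at $T_i(y)$ combined with the continuity of $\underline x_{i,i}$ gives that $g_i$ is continuous at $T_i(y)$, so $g_i(T_i(y)) = g_i(T_i(y)-)$ for every $i$. Combined with the identity derived above, we obtain $g_i(T_i(y)) = g_{i+1}(T_{i+1}(y))$ for all $i\in[m-1]$, and condition \eqref{P5} of Proposition \ref{pro_properties_of_kappa_and_gamma} holds. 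The proposition then delivers $\bT(y) = \vec\gamma(\|\bT(y)\|_1)$ for every regular $y$ with $\bT(y)\in\R_+^m$.

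Finally, for an arbitrary $y$ with $\bT(y)\in\R_+^m$, I would pick a sequence $y_n \uparrow y$ of regular values (possible by density); since $\bT(y_n)\le \bT(y)$ each $\bT(y_n)\in\R_+^m$, so the previous step yields $\bT(y_n)=\vec\gamma(\|\bT(y_n)\|_1)$. Left-continuity of $\bT$ gives $\bT(y_n)\to \bT(y)$, hence $\|\bT(y_n)\|_1 \to \|\bT(y)\|_1$, and continuity of $\vec\gamma$ then yields $\vec\gamma(\|\bT(y_n)\|_1)\to \vec\gamma(\|\bT(y)\|_1)$, completing the proof. The main obstacle is the passage from left-limits $g_i(T_i(y)-)$, where equality across $i$ is built into the minimality characterization of $\bT(y)$, to the values $g_i(T_i(y))$ required by Proposition \ref{pro_properties_of_kappa_and_gamma}; this mismatch is caused by potential common jumps of $x_{*,i}$ at $T_i(y)$, and is circumvented by restricting first to regular $y$ and then using continuity of $\vec\gamma$ to take limits.
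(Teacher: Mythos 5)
Your proposal is correct and follows essentially the same route as the paper: derive $g_i(T_i(y)-)=g_k(T_k(y)-)$ from the minimality characterization of $\bT(y)$ (the paper's \eqref{E:important_identity_two}), use Lemma \ref{lem_minimal_solution_via_underline_x}(ii) for strict left-increase, verify hypothesis \eqref{P5} of Proposition \ref{pro_properties_of_kappa_and_gamma} on a dense set of $y$ where the relevant $g_i$ are continuous at $T_i(y)$, and then extend by left-continuity of $\bT$ and continuity of $\vec\gamma$. Your ``regular'' set is a harmless slight enlargement of the paper's good set $Y$ (you only require continuity of $x_{*,i}$ at $T_i(y)$ rather than of all $x_{*,l}$), and you merely spell out the closing density-and-limits step that the paper leaves brief.
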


\begin{proof} 
Each $x_{i,j}$ is a {\em rcll} function,
so it can have at most countably many discontinuities (and they are all jumps).
On the set $C$ defined by 
  \[
    C:=\bigcap_{ i, j=1 }^m \{ u\geq 0:\ x_{i,j}(u-)=x_{i,j}(u) \}= \bigcap_{ l=1 }^m \{u \geq 0: x_{*,l}(u-)=x_{*,l}(u) \}
  \]
clearly all $\underline x_i$ and (therefore) all $g_i$ are (left-)continuous.
The complement of $C$ is either a finite (possibly empty) or a countable subset of $\R_+$. 

Recall that $y \mapsto  \bT(y)$ is (component-wise) strictly increasing (and left-continuous, see Lemma \ref{lem_T_is_lcrl}).
Therefore, 
$ T_i^{-1}(C^c):=\{ y\geq 0:\ T_i(y) \in C^c \}$ is either a finite (possibly empty) or a countably infinite set, for each $i$.
Define
\begin{equation} 
  \label{equ_definition_of_y}
    Y:=\bigcap_{ i=1 }^{ m } T_i^{-1}(C)=\bigcap_{ i=1 }^{ m } \{ y\geq 0:\ T_i(y) \in C \} \subset [0,\infty).
  \end{equation}
The above considerations imply that $Y^c= \bigcup_{ i=1 }^{ m } T_i^{-1}(C^c)$ is a subset of a countable set.
In particular, $Y$ is dense in $[0,\infty)$.

We already know that $\bT(y)$ satisfies 
\eqref{E:important_identity_two} for any $y \geq 0$.
Furthermore, if $y \in Y$, then the identities \eqref{A1} are fulfilled at $\vec t=\bT(y)$. In other words, $g_i(t_i-)=g_i(t_i)$ for each $i$, and in particular $g_i(t_i)=g_{i+1}(t_{i+1})$, for all $i\in [m-1]$.
  
In addition, observe that Lemma~\ref{lem_minimal_solution_via_underline_x}~ii) and the definition of $g_i$ in (\ref{equ_function_g}) imply that
$g_i$ is strictly increasing from the left at $t_i=T_i(y)$ for each $i\in[m]$, so that the additional hypotheses \eqref{P5} of Proposition~\ref{pro_properties_of_kappa_and_gamma} is satisfied at $\bT(y)$ for each $y\in Y$. 
Applying Proposition~\ref{pro_properties_of_kappa_and_gamma} for each $y\in Y$ separately, we conclude that
  \begin{equation} 
  \label{equ_gamma_and_t_for_y}
    \vec\gamma(\|\bT(y)\|_1)=\vec\gamma(s(y))=\bT(y), \quad \forall y \in Y.
  \end{equation}
Lemmas \ref{lem_T_is_lcrl} and \ref{lem_continuity_of_tilde_circ}, joint with the fact that $Y$ is dense in $\R_+$,
now imply the stated claim. 
\end{proof}

{ Recall that here and above $\bbx$ and $\vec\rho$ satisfy \eqref{equ_symmetry_assumption}, and that $y\mapsto \bT(y)$ depends on $\bbx$ and $\vec\rho$, while the map $s \mapsto \vec{\gamma}(s)$ is determined by $\bbx$. Our next goal is to prove Theorem \ref{thm:gammaExistence}(3) which includes the hypothesis that $\bT(y)\in\R_+^m$. We therefore fix a $y$ such that $\bT(y)\in\R_+^m$. This simplifies our analysis of the optimization problem \eqref{E:important_identity_four}. 
}
{  

Indeed, we can now proceed in a way analogous to that in Section \ref{sub:solving_for_T}, relying on the  powerful Theorem \ref{the_t_and_gamma}.
We can { now append an additional condition $\vec{t}=\vec{\gamma}(\|\vec{t}\|_1)$}
to our optimization problem \eqref{E:important_identity_four}, with any given $j\in [m]$ as the reference index.
{The new and equivalent optimization problem is}
\begin{equation}
\label{E:optimization_in_s}
\left\{
\begin{array}{l}
\vec t=\vec\gamma(s),\\
s=\| \vec t\|_1 = \|\vec\gamma(s)\|_1,\\
x_j(\vec\gamma(s)-)=- {\rho_j} y,\\
s \to \min.
\end{array}
\right. 
\end{equation}
{
The first two lines in \eqref{E:optimization_in_s} rely on  
\eqref{P3} and Theorem \ref{the_t_and_gamma}}, the third line comes from 
\eqref{equ_equation_for_t}, and the final { line specifies the optimization rule. After solving for $s(y)\equiv s(y,\bbx,\rho)$, we will use it} to find 
\begin{equation}
\label{E:T_via_s}
\bT(y)=\vec\gamma(s(y)).
\end{equation}
{
It is important to note that}
$s(y)$ solves simultaneously each and every optimization problem
$$
\left\{
\begin{array}{l}
x_j(\vec\gamma(s)-)=- {\rho_j} y,\\
s \to \min,
\end{array}
\right. , \quad j \in [m],
$$
in complete analogy to \eqref{E:T_via_s_sc}--\eqref{E:s_via_x_j_circ}.
The above can be rewritten as $\bT(y)=\vec\gamma(s(y))$, where for all $y\in \R_+$
\begin{equation}
\label{E:s_as_inf}
s(y)=\min\{s\geq 0: x_j(\vec\gamma(s)-)=- {\rho_j} y\}, \quad \forall j \in[m].
\end{equation}
We note that $y\mapsto s(y)$ is a left-continuous non-decreasing function.
{ Indeed, $x_j$ can be replaced in \eqref{E:s_as_inf} with $\underline{x}_j$ from \eqref{equ_definition_of_underline_x_i}, and since $\underline{x}_j$ is non-increasing and continuous, we have that 
\[
s(y)=\min\left\{s\ge 0:-\frac{1}{\rho_j} 
\underline{x}_j(\vec\gamma(s))\ge y\right\}= \inf\left\{s\ge 0:-\frac{1}{\rho_j} 
\underline{x}_j(\vec\gamma(s))\ge y\right\},\quad y\ge 0,
\]
so that $s$ is the left}-continuous generalised inverse
of 
{
$-\frac{1}{\rho_j} 
\underline{x}_j\circ \vec\gamma$.
Furthermore, 
if $y \in Y$ (where $Y$ is the ``good set'' from the proof of Theorem \ref{the_t_and_gamma}) then it is easy to see that $s(y)=\tilde{s}(y)$ where}
\begin{equation}
\label{E:s_as_inf_on_Y}
\tilde{s}(y):=\min\{s\geq 0: x_j(\vec\gamma(s))=- {\rho_j} y\}, \quad \forall j \in[m].
\end{equation}
{ Indeed, the condition in \eqref{E:s_as_inf} is satisfied earlier than the condition in \eqref{E:s_as_inf_on_Y}, so that $s(y)\leq \tilde{s}(y)$ for all $y$. However, if $y\in Y$, then $s(y)$ also solves \eqref{E:s_as_inf_on_Y}, implying the reversed inequality.}

Now define for each $i\in[m]$
\begin{equation}
\label{E:def_S_i}
  S_i(y)=\inf\left\{ s\geq 0:\,{ x_i\circ \vec\gamma(s-)} =\lim_{u\uparrow s}x_i\circ \gamma(u)
  =-\rho_i y \right\}, \quad y\geq 0.
\end{equation}

Note that { $s\mapsto x_i\circ \vec\gamma (s)$} is again a rcll (or \cdl) function { on $[0,+\infty)$, with} no negative jumps. Therefore, when $S_i(y)<\infty$, $\inf$ could be replaced by $\min$. Moreover, { it} is easy to see that $S_i$ is again a left-continuous and non-decreasing function.

\begin{corollary} \label{cor_relation_between_S_and_T}
  The maps $y\mapsto S_i(y)$ and $y\mapsto s(y)$ are identical for each $i\in[m]$. 
In particular, whenever $\bT(y+)\in\R_+^m$ then $S_i(y+)<\infty$ and $$S_i(y+)-S_i(y)=\|\bT(y+)-\bT(y)\|_1$$
for all $i\in[m]$.
\end{corollary}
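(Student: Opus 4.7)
The plan is as follows. First I will deduce the ``in particular'' claim from the main equality $s(y)=S_i(y)$ together with Theorem~\ref{the_t_and_gamma} and property (P3) of Lemma~\ref{lem_properties_of_kappa_and_gamma}. Indeed, when $\bT(y+)\in\R_+^m$, Theorem~\ref{the_t_and_gamma} gives $\bT(y+)=\vec\gamma(s(y+))$, so $s(y+)<\infty$ and hence $S_i(y+)=s(y+)<\infty$. Since each $\gamma_k$ is non-decreasing by (P2), the difference $\bT(y+)-\bT(y)=\vec\gamma(s(y+))-\vec\gamma(s(y))$ has non-negative components, whence (P3) gives
\[
\|\bT(y+)-\bT(y)\|_1 = s(y+)-s(y) = S_i(y+)-S_i(y).
\]

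For the main equality, introduce $h_i(s):=x_i\circ\vec\gamma(s)=\sum_k x_{i,k}(\gamma_k(s))$. Because off-diagonal $x_{i,k}$ (for $k\neq i$) are non-decreasing, $x_{i,i}\in\Dfp$ has no negative jumps, and $\vec\gamma$ is continuous, $h_i$ is a rcll function with no negative jumps. Consequently the past infimum $\underline{h}_i(s):=\inf_{u\le s}h_i(u)$ is continuous and non-increasing, and one checks that $S_i(y)=\inf\{s:\underline{h}_i(s)\le -\rho_i y\}$. The crucial intermediate step is to establish the identity
\[
\underline{h}_i(s)=\inf_{u\le s}\underline{x}_i\circ\vec\gamma(u),\qquad s\ge 0,
\]
where $\underline{x}_i$ is from~\eqref{equ_definition_of_underline_x_i}. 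The direction $\ge$ follows from $\underline{x}_i\circ\vec\gamma\le h_i$ (since $\underline{x}_{i,i}\le x_{i,i}$). For $\le$: given $u\le s$ and $\epsilon>0$, the continuity of $\gamma_i$ yields $\gamma_i([0,u])=[0,\gamma_i(u)]$, so $\underline{x}_{i,i}(\gamma_i(u))$ can be approximated within $\epsilon$ by $x_{i,i}(\gamma_i(u''_\epsilon))$ for some $u''_\epsilon\le u$; combined with the non-decreasing property of each off-diagonal $x_{i,k}\circ\gamma_k$, this yields $h_i(u''_\epsilon)\le\underline{x}_i\circ\vec\gamma(u)+\epsilon$, and letting $\epsilon\to 0$ followed by taking the infimum over $u$ gives the claim.

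From this identity I would deduce both inequalities. For $S_i(y)\le s(y)$: at $s=s(y)$, Theorem~\ref{the_t_and_gamma} gives $\vec\gamma(s(y))=\bT(y)$ and Lemma~\ref{lem_minimal_solution_via_underline_x} yields $\underline{x}_i(\bT(y)-)=-\rho_i y$; passing to the left limit along $\vec\gamma$ (using continuity of $\underline{x}_{i,i}$ and non-decrease of the off-diagonal $x_{i,k}$) shows $\underline{x}_i\circ\vec\gamma(s(y)-)\le -\rho_i y$, whence $\underline{h}_i(s(y))\le -\rho_i y$ by the identity above. For $S_i(y)\ge s(y)$: it suffices to establish $\underline{x}_i(\vec\gamma(u))>-\rho_i y$ for every $u<s(y)$. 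For any such $u$, property (P3) gives $\|\vec\gamma(u)\|_1=u<s(y)=\|\bT(y)\|_1$, so $\vec t_0:=\vec\gamma(u)$ is componentwise no larger than $\bT(y)$ with strict inequality in at least one coordinate; combining the $\ell^1$-minimality of $\bT(y)$ in~\eqref{E:minim_T_underline} with the strict left-decrease of $\underline{x}_{i,i}$ at $T_i(y)$ (Lemma~\ref{lem_minimal_solution_via_underline_x}(ii)) then rules out $\underline{x}_i(\vec t_0)\le -\rho_i y$.

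The main obstacle will be this last step: ruling out $\underline{x}_i(\vec t_0)\le -\rho_i y$ rigorously for $\vec t_0=\vec\gamma(u)<\bT(y)$. This requires reconciling a single-coordinate inequality against the multi-coordinate minimality of $\bT(y)$, and carefully handling the cases in which some $\gamma_k$ is constant on a left-neighborhood of $u$---which, via (P4) of Lemma~\ref{lem_properties_of_kappa_and_gamma} and the compatibility hypotheses~\eqref{H1}--\eqref{H2} governing $g_k^{-1}\tcirc\kappa$, would force $g_k$ to be constant near $T_k(y)$, contradicting Lemma~\ref{lem_minimal_solution_via_underline_x}(ii).
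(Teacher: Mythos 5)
Your overall architecture (reduce to $S_i(y)\le s(y)$ and $S_i(y)\ge s(y)$ via the running infimum $\underline h_i$ of $h_i=x_i\circ\vec\gamma$, plus the identity $\underline h_i(s)=\inf_{u\le s}\underline x_i\circ\vec\gamma(u)$, with the ``in particular'' claim then following from (P3) and monotonicity) is reasonable, but both key inequalities are left with genuine gaps, and the tools you name would not close them. For $S_i(y)\le s(y)$: monotonicity of the off-diagonal $x_{i,k}$ only gives $x_{i,k}(\gamma_k(u))\le x_{i,k}(T_k(y))$ for $u<s(y)$, not $\le x_{i,k}(T_k(y)-)$; if some $x_{i,k}$ jumps exactly at $T_k(y)$ while $\gamma_k$ is constant at height $T_k(y)$ on a left neighbourhood of $s(y)$, your claimed bound $\underline x_i\circ\vec\gamma(s(y)-)\le-\rho_iy$ does not follow from what you cite. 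For $S_i(y)\ge s(y)$: the step you yourself flag as the main obstacle indeed fails with the proposed justification. The $\ell^1$-minimality in \eqref{E:minim_T_underline} constrains solutions of the \emph{full} system $\underline x_k(\vec t-)=-\rho_k y$, all $k$; it does not forbid a single-coordinate crossing $\underline x_i(\vec t_0)\le-\rho_i y$ at a point $\vec t_0$ that is componentwise smaller than $\bT(y)$ (take $t_{0,i}$ just below $T_i(y)$ and $t_{0,j}$, $j\ne i$, far below $T_j(y)$ with a large intermediate jump of $x_{i,j}$: Lemma \ref{lem_minimal_solution_via_underline_x}(ii) gives a strict but arbitrarily small gain, easily outweighed). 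What actually rescues both steps is the balance encoded in $\vec\gamma$, not minimality alone: since $g_k(T_k(y)-)$ equals a common value $v_0$ for all $k$ (from \eqref{E:important_identity_two}), Lemma \ref{lem_minimal_solution_via_underline_x}(ii) forces $g_k(t)<v_0$ strictly for $t<T_k(y)$, hence $g_k^{-1}(v_0-)\ge T_k(y)$, $f(v_0-)\ge s(y)$, so $\kappa(u)<v_0$ and therefore $\gamma_k(u)<T_k(y)$ for every $k$ and every $u<s(y)$; this strict statement is what makes both of your limit arguments legitimate, and it is absent from the proposal. A smaller but real wrinkle: ``$\underline x_i(\vec\gamma(u))>-\rho_i y$ for all $u<s(y)$'' does not by itself give $\underline h_i(s)>-\rho_i y$ for $s<s(y)$, because the infimum need not be attained (left limits may reach the level even if no value does).

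For comparison, the paper's proof avoids all of these pathologies by a different reduction: both $s$ and $S_i$ are left-continuous, so it suffices to prove equality on the dense good set $Y$ from the proof of Theorem \ref{the_t_and_gamma}, where no $x_{k,j}$ jumps at the coordinates of $\bT(y)$; there $s(y)=\tilde s(y)$ and the comparison of the defining conditions \eqref{E:s_as_inf}, \eqref{E:s_as_inf_on_Y}, \eqref{E:def_S_i} is immediate, with only the elementary pathwise inequality $x_i\circ\vec\gamma(r-)\ge x_i(\vec\gamma(r)-)$ needed for the reverse direction. If you want to keep your past-infimum framework, the cheapest repair is to adopt that same reduction: prove your two inequalities only for $y\in Y$ (where the jump/flat-piece issues vanish) and conclude by left-continuity of $s$ and $S_i$.
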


\begin{proof} 
For the first part of the statement, it is enough to show that $s$ and $S_i$ coincide on the dense set $Y$, because both $S_i$ and $s$ are left-continuous functions.

{
Comparing the conditions in~\eqref{E:s_as_inf_on_Y} and in~\eqref{E:def_S_i},
it is clear that $S_i(y)\leq \tilde{s}(y)$ for any $y$.
However if $y\in Y$, then $s(y)=\tilde{s}(y)$ implying
$$
S_i(y)\leq s(y),\ y\in Y.
$$

To prove the reversed inequality, we use the fact that $x_i$ (and therefore $x_i\circ \vec\gamma$) has no non-negative jumps for each $i\in [m]$ and the monotonicity and continuity of $\vec\gamma$. 
More precisely, $\vec\gamma$ can be either strictly increasing from the left at $r$, or constant on some interval $(r-\eps,r]$ of positive length.
In the former case, $x_i\circ \vec\gamma(r-)$ equals $x_i\circ \vec(\gamma(r)-)$, while in the latter case $x_i\circ \vec\gamma(r-) = x_i\circ \vec\gamma(r)\geq x_i(\vec\gamma(r)-)$.  
The hereby verified inequality 
\[
x_i\circ \vec\gamma(r-)\ge x_i(\vec\gamma(r)-),  \ r\geq 0,
\]
implies that
the condition in \eqref{E:s_as_inf} is satisfied earlier than that in \eqref{E:def_S_i}, therefore 
$$
s(y) \leq S_i(y), \ y \in [0,\infty),
$$
which concludes the argument for $s\equiv S_i$.
}

The second part of the statement directly follows from the (strict) monotonicity of $y \mapsto \bT(y)$ yielding  $\|\bT(y+)-\bT(y)\|_{1}=\|\bT(y+)\|_1-\|\bT(y)\|_1$.
\end{proof}}

\subsection*{Acknowledgements}
DC was partially supported by NSF DMS 2023239. He would also like to thank the Institute for Foundation of Data Science and the University of Washington, where part of this research was conducted.
VK was partially supported by the Deutsche Forschungsgemeinschaft (DFG, German Research Foundation) – SFB 1283/2 2021 – 317210226. 
The second and the third author thank the Max Planck Institute for Mathematics in the Sciences for its warm hospitality, where a part of this research was carried out. The second author is also grateful to IRMA, University of Strasbourg, where a part of this research was started.

\setlength{\bibsep}{0pt}

\providecommand{\bysame}{\leavevmode\hbox to3em{\hrulefill}\thinspace}
\providecommand{\MR}{\relax\ifhmode\unskip\space\fi MR }
\providecommand{\MRhref}[2]{%
	\href{http://www.ams.org/mathscinet-getitem?mr=#1}{#2}
}
\providecommand{\href}[2]{#2}

\end{document}